\DeclareMathAlphabet{\mathpzc}{OT1}{pzc}{m}{it}
\newtheorem{theorem}{Theorem}[section]
\newtheorem{lemma}[theorem]{Lemma}
\theoremstyle{definition}
\newtheorem{definition}[theorem]{Definition}
\theoremstyle{remark}
\numberwithin{equation}{section}
 \newcommand{\virgolette}{``}
\newcommand{\set}[1]{\ensuremath{\mathbb{#1}}}
\newcommand{\slantone}[2]{{\raisebox{.1em}{$#1$}\left/\raisebox{-.1em}{$#2$}\right.}}
\newcommand*{\defeq}{\mathrel{\vcenter{\baselineskip0.5ex \lineskiplimit0pt
                     \hbox{\scriptsize.}\hbox{\scriptsize.}}}%
                     =}
\newcommand{\slanttwo}[2]{{\raisebox{.2em}{$#1$} \big/ \raisebox{-.2em}{$#2$}}}
\newcommand\asim{\mathrel{%
  \ooalign{\raise0.1ex\hbox{$\sim$}\cr\hidewidth\raise-0.8ex\hbox{\scalebox{0.9}{$\scriptstyle{x}$}}\hidewidth\cr}}}
\newcommand{\proj}[1]{\ensuremath{\mathbb{P}^{#1}}}
\newcommand{\mani}{\ensuremath{\mathpzc{M}}}
\newcommand{\manir}{\ensuremath{\mathpzc{M}_{red}}}
\newcommand{\stsheaf}{\ensuremath{\mathcal{O}_{\mathpzc{M}}}}
\newcommand{\stsheafred}{\ensuremath{\mathcal{O}_{\mathpzc{M}_{red}}}}
\newcommand{\beq}{\begin{equation}}
\newcommand{\eeq}{\end{equation}}
\newcommand{\bear}{\begin{eqnarray}}
\newcommand{\eear}{\end{eqnarray}}
\newcommand{\longhookrightarrow}{\ensuremath{\lhook\joinrel\relbar\joinrel\rightarrow}}
\DeclareMathOperator{\coker}{coker}
\begin{document}

\begin{flushright}
DISIT-2018
\par\end{flushright}

\title{Non-Projected Supermanifolds and Embeddings in Super Grassmannians}


\author{Simone Noja}
\address{Dipartimento di Scienze e Innovazione Tecnologica - Università del Piemonte Orientale, 
Via T. Michel 11, 15121, Alessandria, Italy.}
\address{INFN - Sezione di Torino, via P. Giuria 1, 10125 Torino.}
\email{sjmonoja87@gmail.com}




\begin{abstract}
In this paper we give a brief account of the relations between non-projected supermanifolds and projectivity in supergeometry. Following the general results of \href{https://arxiv.org/abs/1706.01354}{arXiv:1706.01354}, we study an explicit example of non-projected and non-projective supermanifold over the projective plane and we show how to embed it into a super Grassmannian. The geometry of super Grassmannians is also reviewed in details.
\end{abstract}

\maketitle

\tableofcontents

\section{Introduction: Projectivity and Non-Projectivity in Supergeometry}

\noindent The problem of projectivity in supergeometry is a long-standing one. Indeed, large classes of complex supermanifolds whose reduced complex manifolds $\manir $ are projective - \emph{i.e.}\ there exists an embedding $\manir \hookrightarrow \proj n$ - are known to be \emph{non} {superprojective} (henceforth, projective), that is they do not admit an embedding $\mani \hookrightarrow \proj {n|m}$ for some projective superspace $\proj {n|m}.$ This is the case, for example, of a large class of complex super Grassmannians (see \cite{Manin} and section 4 of this paper). \\
The problem of projectivity is related to another central problem characterizing the theory of complex supermanifold, that of the so-called \emph{non-projected supermanifolds}: these are complex supermanifolds that do \emph{not} possess a projection to their reduced manifold $\mani \rightarrow \manir$. Indeed, it has been shown that any \emph{projected} supermanifold whose reduced manifold is projective, is also superprojective. In other words, if $\manir$ is a projective complex manifolds and $\mani $ is projected, the embedding $\manir \hookrightarrow \proj n$ can be lifted to an embedding of supermanifolds $\mani \hookrightarrow \proj {n|m}$ (see for example \cite{BPW}). Notice that, for this to be true, the existence of the projection map $\mani \rightarrow \manir$ is crucial: indeed if we let $\mathcal{L}_{red}$ be a very-ample line bundle on $\manir$, then $\pi^\ast \mathcal{L}_{red}$ will be very-ample on $\mani$, in the sense that $\pi^\ast \mathcal{L}_{red}$ will allow for the embedding at the level of the supermanifolds $\mani \hookrightarrow \proj {n|m}$ \cite{BPW}, \cite{NojaPhD}. \\
The story is different whenever a supermanifold is non-projected. The obstruction theory to find an embedding into projective superspace for a complex supermanifold has been studied for example in \cite{BPW}, back in the early days of supergeometry. There it is shown that the obstruction to extend the embedding map $\manir \hookrightarrow \proj n$ at the level of the reduced complex manifolds, to an embedding $\manir \hookrightarrow \proj {n|m}$ at the level of complex supermanifolds lies in in the cohomology groups $H^2 (Sym^{2k} \mathcal{F}_\mani)$ for $k=1, \ldots, \mbox{rank} \, \mathcal{F}_\mani/2$ and where the vector bundle $\mathcal{F}_\mani = \mathcal{J}_\mani / \mathcal{J}^2_\mani$ is constructed via a suitable quotient of the \emph{nilpotent bundle} $\mathcal{J}_\mani$ of the supermanifold, encoding the behavior of the anti-commutative nilpotent part of the geometry, see \cite{Manin}, \cite{NojaPhD}. This result has some obvious, yet remarkable, consequences: for example, by dimensional reasons, one sees that any supercurve, \emph{i.e.}\ any supermanifold of dimension $1|m$ constructed over a projective curve, is actually projective, and the issues regarding projectivity start arising for dimension $n|m$, for $n,m\geq 2.$ \\
Following these considerations, whist the literature fully acknowledged that in the realm of supergeometry projective superspaces $\proj {n|m}$ are not as important as they are in ordinary complex algebraic geometry, nothing has been said, by the way, about which sort of space is to be considered when one looks for a \emph{universal embedding space} for complex supermanifolds. In the recent \cite{CNR} this problem has been taken on starting from dimension $2|2$, working over the projective plane $\proj 2$, and it has been shown that a large class of non-projected complex supermanifolds does not indeed admit projective embeddings, while all of these non-projected \emph{and} non-projective supermanifolds admit embeddings in some complex \emph{super Grassmannians}, thus hinting that the same might happen also in higher dimensions. 

In the paper we consider again the problem of embedding a supermanifold into a super Grassmannians, enriching and clarifying the abstract results of \cite{CNR} by very explicit constructions and examples. In particular, in the first section of the paper the key concepts of supergeometry are revised and the notation is fixed, also the main result of \cite{CNR} are reported and put in context as to make the paper self-consistent. Next, following \cite{Manin}, the supergeometry of complex super Grassmannians is explained. In the last section it is shown how to build maps to super Grassmannians and the example of the $2|2$ dimensional supermanifold over $\proj 2$ characterized by a decomposable fermionic bundle $\mathcal{F}_\mani = \Pi \mathcal{O}_{\proj 2}(-1) \oplus \Pi \mathcal{O}_{\proj 2} (-2)$ is carried out in full details.\\

The interested reader might find further general references about supergeometry can be found in \cite{Manin}, \cite{ManinNC} and \cite{Vara}. On the problem of projectivity in supergeometry, the reader might refer to \cite{BPW}, \cite{PenSko}, and the recent \cite{NojaPi}, \cite{FLLN}, \cite{Bet}.

\section{Basics of Supermanifolds}

\noindent In this section we recall the basic definitions in the theory of (complex) supermanifolds. The interested reader might find more details in \cite{Manin} or \cite{NojaPhD}, which we will follow closely.
The most important notion in supergeometry is the one of superspace, which is defined as follows.
\begin{definition}[Superspace] A superspace is a pair $(|\mathpzc{M}|, \mathcal{O}_{\mathpzc{M}})$, where $|\mathpzc{M}|$ is a topological space and $\mathpzc{O}_{\mathpzc{M}}$ is a sheaf of $\mathbb{Z}_2$-graded supercommutative rings (super rings for short) defined over $|\mathpzc{M}|$ and such that the stalks $\mathcal{O}_{\mathpzc{M}, x}$ at every point of $|\mathpzc{M}|$ are {local rings}. \\
In other words, a superspace is a locally ringed space having structure sheaf given by a sheaf of super rings.
\end{definition}

\noindent The requirement about the stalks being local rings is the same thing as asking that the \emph{even} component of the stalk is a usual commutative local ring, for in superalgebra one has that if $A = A_0 \oplus A_1$ a super ring, then $A$ is local if and only if its even part $A_0$ is (see for example \cite{Vara}). \\
It is important to observe that one can always construct a superspace out of two classical data: a topological space, call it again $|\mathpzc{M}|$, and a vector bundle over $|\mathpzc{M}|$, call it $\mathcal{E}$ (analogously: a locally-free sheaf of $\mathcal{O}_{|\mathpzc{M}|}$-modules). Now, we denote $\mathcal{O}_{|\mani|}$ the sheaf of continuous functions (with respect to the given topology) on $|\mani|$ and we put $\bigwedge^0 \mathcal{E}^\ast = \mathcal{O}_{|\mani|}$. The sheaf of sections of the bundle of exterior algebras $\mathcal{\bigwedge^\bullet} \mathcal{E}^\ast$ has an obvious $\set{Z}_2$-grading (by taking its natural $\set{Z}$-grading $\mbox{mod}\,2$) and therefore in order to realise a superspace it is enough to take the structure sheaf $\mathcal{O}_\mathpzc{M}$ of the superspace to be the sheaf of sections valued in $\mathcal{O}_{|\mani|}$ of the bundle of exterior algebras. This is what is called \emph{local model}.
\begin{definition}[Local Model $\mathfrak{S}(|\mathpzc{M}|, \mathcal{E})$] Given a pair $(|\mathpzc{M}|, \mathcal{E})$, where $|\mathpzc{M}|$ is a topological space and $\mathcal{E}$ is a vector bundle over $|\mathpzc{M}|$, we call $\mathfrak{S}(|\mathpzc{M}|, \mathcal{E})$ the superspace modelled on the pair $(|\mathpzc{M}|, \mathcal{E})$, where the structure sheaf is given by the $\mathcal{O}_{|\mani|}$-valued sections of the exterior algebra $\bigwedge^\bullet \mathcal{E}^\ast$. 
\end{definition}  
\noindent This is a \emph{minimal} definition of local model: we have let $|\mani|$ to be no more than a topological space and as such we are only allowed to take $\mathcal{O}_{|\mani|}$ to be the sheaf of continuous functions on it. One can obviously work in a richer and more structured category, such as the \emph{differentiable}, \emph{complex analytic} or \emph{algebraic} category: from now on, we will work in the complex analytic category and we consider local models based on the pair $(\manir, \mathcal{E}),$ where $\manir$ is a \emph{complex manifold} (its underlying topological space will be denoted with $|\mani|$ and the sheaf of holomorphic functions on $\manir$ with $\stsheafred$) and where $\mathcal{E}$ is a \emph{holomorphic vector bundle} on $\manir$. We will call \emph{holomorphic local model} a local model constructed on these kind of data .\\ 
The concept of local model enters in the definition of the main character of this paper.
\begin{definition}[Complex Supermanifold] A complex supermanifold $\mani $ of dimension $n|m$ is a superspace that is \emph{locally} isomorphic to some holomorphic local model $\mathfrak{S} ( \manir, \mathcal{E})$, where $\manir$ is a complex manifold of dimension $n$ and $\mathcal{E}$ is a holomorphic vector bundle of rank $m$.
\end{definition}
\noindent In other words, if the topological space $|\mathpzc{M}|$ underlying $\manir $ has a basis $\{{U}_i \}_{i \in I}$, the structure sheaf $\stsheaf = \mathcal{O}_{\mani, 0} \oplus \mathcal{O}_{\mani, 1}$ of the supermanifold $\mani$ is described via a collection $\{ \psi_{{U}_i} \}_{i\in I}$ of local isomorphisms of sheaves 
\bear
U_i \longmapsto \psi_{{U}_i} :  \mathcal{O}_{\mathpzc{M}}\lfloor_{U_i} \stackrel{\cong}{\longrightarrow} \bigwedge^\bullet \mathcal{E}^\ast \lfloor_{U_i} 
\eear 
where we have denoted with $\bigwedge^\bullet \mathcal{E}^\ast$ the sheaf of sections of the exterior algebra of $\mathcal{E}$ considered with its $\mathbb{Z}_2$-gradation. \\
In general, given two superspaces we can define a morphism relating these two. 
\begin{definition}[Morphisms of Superspaces] Given two superspaces $\mathpzc{M}$ and $\mathpzc{N}$ a morphism $\varphi : \mathpzc{M} \rightarrow \mathpzc{N}$ is a pair $\varphi \defeq (\phi, \phi^\sharp)$ where 
\begin{enumerate}
\item
$\phi : |\mathpzc{M}| \rightarrow |\mathpzc{N}|$ is a continuous map of topological spaces; 
\item $\phi^\sharp : \mathcal{O}_{\mathpzc{N}} \rightarrow \phi_* \mathcal{O}_\mathpzc{M}$ is a morphism of sheaves of $\set{Z}_2$-graded rings, having the property that it preserves the $\set{Z}_2$-grading and that given any point $x\in |\mathpzc{M}|$, the homomorphism
$
\phi^\sharp_x : \mathcal{O}_{\mathpzc{N}, \phi (x)} \rightarrow \mathcal{O}_{\mathpzc{M}, x}
$
is local, that is it preserves the (unique) maximal ideal, $\phi^\sharp_x (\mathfrak{m}_{\phi (x)}) \subseteq \mathfrak{m}_x.$
\end{enumerate}
\end{definition}
\noindent This definition applies in particular to the case of complex supermanifolds and enters the definition of \emph{sub-supermanifolds}. Indeed, as in the ordinary theory, a \emph{sub-supermanifold} is defined in general as a pair $(\mathpzc{N}, \iota)$, were $\mathpzc{N}$ is a supermanifold and $\iota \defeq (\iota , \iota^\sharp) : ( \mathpzc{N}, \mathcal{O}_{\mathpzc{N}} ) \rightarrow (\mani, \stsheaf)$ is an \emph{injective} morphism with some regularity property. In particular, depending on these regularity properties, we can distinguish between two kind of sub-supermanifolds. We start from the milder notion.
\begin{definition}[Immersed Supermanifold] Let $\iota \defeq (i, i^\sharp ) : (|\mathpzc{N}|, \mathcal{O}_{\mathpzc{N}} ) \rightarrow (|\mani|, \stsheaf)$ be a morphism of supermanifolds. We say that $(\mathpzc{N}, \iota)$ is an immersed supermanifold if $i : |\mathpzc{N}| \rightarrow |\mani|$ is injective and the differential $(d\iota)(x) : \mathcal{T}_\mathpzc{N} (x) \rightarrow \mathcal{T}_\mani {(i (x))} $ is injective for all $x \in |\mathpzc{N}|.$ 
\end{definition} 
\noindent Making stronger requests, we can give instead the following definition.
\begin{definition}[Embedded Supermanifold] Let $\iota \defeq (i, i^\sharp ) : (|\mathpzc{N}|, \mathcal{O}_{\mathpzc{N}} ) \rightarrow (|\mani|, \stsheaf)$ be a morphism of supermanifolds. We say that $(\mathpzc{N}, \iota)$ is an embedded supermanifold if it is an immersed submanifold and $i: |\mani| \rightarrow |\mathpzc{N}|$ is an homeomorphism onto its image. \\
In particular, if $\iota (|\mathpzc{N}|) \subset |\mani|$ is a closed subset of $|\mani|$ we will say that $(\mathpzc{N}, \iota)$ is a closed embedded supermanifold.
\end{definition}
\noindent In what follows, we will always deal with closed embedded supermanifolds. 
Remarkably, it is possible to show that a morphism $\iota : \mathpzc{N} \rightarrow \mani$ is an embedding \emph{if and only if} the corresponding morphism $\iota^\sharp : \stsheaf \rightarrow \mathcal{O}_{\mathpzc{N}}$ is a surjective morphism of sheaves. Notice that, for example, given a supermanifold $\mani,$ one always has a natural closed embedding: the map $\iota : \manir \rightarrow \mani $, that embeds the reduced manifold underlying the supermanifold into the supermanifold itself.\\

\noindent We now introduce some further pieces of information carried by a supermanifold.
\begin{definition}[Nilpotent Sheaf / Fermionic Sheaf] We call the {nilpotent sheaf} $\mathcal{J}_\mani$ the sheaf of ideals of $\stsheaf = \mathcal{O}_{\mani, 0} \oplus \mathcal{O}_{\mani, 1}$ generated by all of the nilpotent sections, that is we put $\mathcal{J}_\mani \defeq \mathcal{O}_{\mani,1} \oplus \mathcal{O}_{\mani, 1}^2$. \\
Also, we call fermionic sheaf $\mathcal{F}_\mani$ the locally-free sheaf of $\mathcal{O}_{\manir}$-module of rank $0|m$ given by the quotient $\mathcal{F}_\mani \defeq \slantone{\mathcal{J}_\mani}{\mathcal{J}_{\mani}^2}$.
\end{definition}
\noindent It is crucial to note that modding out all of the nilpotent sections from the structure sheaf $\stsheaf $ of the supermanifold $\mani$ we recover the structure sheaf $\stsheafred$ of the  underlying ordinary complex manifold $\manir$, the local model was based on. We call the complex manifold $\manir$ the \emph{reduced manifold} of the supermanifold $\mani:$ loosely speaking, the reduced manifold arises by setting all of the nilpotents in $\stsheaf $ to zero.  \\
In other words, more invariantly, attached to any complex supermanifold there is a short exact sequence that relates the supermanifold with its reduced manifold:
\bear \label{ses}
\xymatrix@R=1.5pt{ 
0 \ar[rr] && \mathcal{J}_\mani \ar[rr] &&  \stsheaf  \ar[rr]^{\iota\; \; \;} && \stsheafred \ar[rr] && 0, 
}
\eear
where $\stsheafred \cong \slantone{\stsheaf}{\mathcal{J}_\mani}$ and the surjective sheaf morphism $\iota : \mathcal{O}_\mani \rightarrow \stsheafred$ corresponds to the existence of an \emph{embedding} $\manir \stackrel{\iota}{\longhookrightarrow} \mani$ of the reduced manifold $\manir $ inside the supermanifold $\mani$. Notice that $\mathcal{J}_\mani = \ker (\iota)$, where $\iota : \stsheaf \rightarrow \stsheafred$ is the surjective sheaf morphism in \eqref{ses}.\\ We will refer to the short exact sequence \eqref{ses} as the \emph{structural exact sequence} of $\mani$.

\noindent  A very natural question arising when looking at the structural exact sequence \eqref{ses} associated to a certain supermanifold is whether it is a \emph{split} exact sequence or not, that is whether there exists a {retraction} - called \emph{projection} in this context - $\pi : \stsheafred \rightarrow \stsheaf$ such that $\iota \circ \pi = id_{\stsheafred}:$
\bear \label{splittingseq}
\xymatrix@R=1.5pt{ 
0 \ar[rr] && \mathcal{J}_\mani \ar[rr] & &  \stsheaf  \ar[rr]_{\iota} && \ar@{-->}@/_1.3pc/[ll]_{\pi} \stsheafred \ar[rr] && 0.
 }
\eear
Notice that, more precisely, this shall be recasted into the splitting of two exact sequences - the even and the odd part of \eqref{ses} -, as we are only dealing with parity preserving morphisms. In particular, we shall give the following definition. 
\begin{definition}[Projected Supermanifold] We say that a supermanifold is projected if the \emph{even} part of its structural exact sequence \eqref{ses} splits:
\bear \label{project}
\xymatrix@R=1.5pt{ 
0 \ar[rr] && \mathcal{J}_{\mani,0} \ar[rr] & &  \mathcal{O}_{\mani, 0}  \ar[rr]_{\iota_0} && \ar@{->}@/_1.3pc/[ll]_{\pi_0} \stsheafred \ar[rr] && 0.
 }
\eear 
\end{definition}
\noindent It is important to observe that if the structure sheaf of a supermanifold is a sheaf of $\stsheafred$-modules if and only if the supermanifold is projected, indeed in this case one has that $\stsheaf \cong \stsheafred \oplus \mathcal{J}_\mani:$ is this case the theory simplifies considerably as the all of the sheaves of $\stsheaf$-modules defined on the supermanifold are also sheaves of $\stsheafred$-modules.\\
Notably, if also the \emph{odd} part of the structural exact sequence attached to the supermanifold $\mani$ is split, that is
\bear \label{oddses}
\xymatrix@R=1.5pt{ 
0 \ar[rr] && (\mathcal{J}_{\mani}^2)_1 \ar[rr] & &  \mathcal{O}_{\mani, 1}  \ar[rr]_{\iota_1} && \ar@{->}@/_1.3pc/[ll]_{\pi_1} \mathcal{F}_{\mani} \ar[rr] && 0,
 }
\eear 
then the supermanifold $\mani$ is called \emph{split}: this expresses in a more invariant and meaningful form the isomorphism $\mani \cong \mathfrak{S}(\manir, \Pi \mathcal{F}_\mani^\ast):$ the supermanifold is globally isomorphic to the local model it is based onto. In other words, we might say that a supermanifold $\mani$ is split if and only if is projected and the short exact sequence \eqref{oddses} is split. There indeed exists projected supermanifolds that are \emph{not} split. \\

Notice that all of the complex supermanifolds having odd dimension $1$ are projected and split because of dimensional reasons. When going up to odd dimension $2$ a supermanifold can instead be non-projected - the short exact sequence \eqref{project} tells that $\mathcal{O}_{\mani, 0}$ an extension of $\stsheafred$ by the line bundle $Sym^2 \mathcal{F}_\mani$. If we call $\mathcal{N}= 2$ supermanifold a complex supermanifolds having odd dimension equal to $2$, we have the following important result. 
\begin{theorem}[$\mathcal{N}=2$ Supermanifolds] \label{N=2} Let $\mani$ be a $\mathcal{N}=2$ supermanifold. Then $\mani$ is defined up to isomorphism by the triple 
$(\manir, \mathcal{F}_\mani, \omega_\mani)$ where
$\mathcal{F}_\mani$ is a rank $0|2$ sheaf of locally-free $\mathcal{O}_{\manir}$-modules, the fermionic sheaf of $\mani$, and $\omega_\mani \in H^1 (\manir, \mathcal{T}_{\manir} \otimes Sym^2 \mathcal{F}_\mani).$ The supermanifold $\mani$ is non-projected if and only if $\omega_\mani \neq 0$.
\end{theorem}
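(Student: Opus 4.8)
The plan is to classify $\mathcal{N}=2$ supermanifolds by studying the structure sheaf through its $\mathcal{J}_\mani$-adic filtration and the associated transition data on a trivializing cover. First I would record the structural simplifications forced by odd dimension $2$: since there are only two odd generators, one has $\mathcal{J}_\mani^3=0$ and $\mathcal{J}_\mani^2$ is the line bundle $Sym^2\mathcal{F}_\mani$, spanned locally by $\theta^1\theta^2$. Moreover $(\mathcal{J}_\mani^2)_1=0$, so the odd sequence \eqref{oddses} is automatically split and yields $\mathcal{O}_{\mani,1}\cong\mathcal{F}_\mani$; all nontrivial gluing is therefore concentrated in the even sequence \eqref{project}, which presents $\mathcal{O}_{\mani,0}$ as a square-zero extension of $\stsheafred$ by $Sym^2\mathcal{F}_\mani$. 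This already isolates $\manir$ and $\mathcal{F}_\mani$ as the leading-order invariants.

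Next I would write transition functions on overlaps $U_i\cap U_j$ in coordinates $(x_i^a,\theta_i^\alpha)$. Nilpotency of order three forces the even transition to truncate as $x_i^a=\tilde f_{ij}^a(x_j)+\psi^a_{ij}(x_j)\,\theta_j^1\theta_j^2$ and the odd transition to be linear, $\theta_i^\alpha=(M_{ij})^\alpha_\beta(x_j)\,\theta_j^\beta$. The degree-zero parts $\tilde f_{ij}^a$ are the transition functions of $\manir$ and the matrices $M_{ij}$ form the cocycle of $\mathcal{F}_\mani$, each satisfying its own cocycle condition. The genuinely new datum is the top coefficient, which I package as $\omega_{ij}\defeq\sum_a\psi^a_{ij}\,\theta_j^1\theta_j^2\otimes\partial_{x_i^a}$, a local section of $\mathcal{T}_\manir\otimes Sym^2\mathcal{F}_\mani$: the factor $\partial_{x_i^a}$ transforms through the reduced Jacobian into $\mathcal{T}_\manir$ and $\theta_j^1\theta_j^2$ through $\det M_{ij}$ into $Sym^2\mathcal{F}_\mani$. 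Imposing the composition law $f_{ik}=f_{ij}\circ f_{jk}$ on triple overlaps and extracting the coefficient of $\theta^1\theta^2$ via the super chain rule, I would show that $\{\omega_{ij}\}$ is a \v{C}ech $1$-cocycle.

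Then I would determine the residual gauge freedom. Coordinate changes fixing the leading data have the form $x_i^a\mapsto x_i^a+h^a_i(x_i)\,\theta_i^1\theta_i^2$ with $\theta^\alpha_i$ unchanged; substituting into the transition functions shows they shift $\{\omega_{ij}\}$ exactly by the coboundary of $\{h_i\}$. Hence only the class $\omega_\mani\defeq[\{\omega_{ij}\}]\in H^1(\manir,\mathcal{T}_\manir\otimes Sym^2\mathcal{F}_\mani)$ is an isomorphism invariant, independent of all choices. Conversely, given an abstract triple $(\manir,\mathcal{F},\omega)$ and a cocycle representative, the formulas above \emph{define} transition functions whose cocycle condition is guaranteed precisely by closedness of $\omega$; this reconstructs an $\mathcal{N}=2$ supermanifold, and cohomologous representatives produce isomorphic supermanifolds. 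This yields the asserted bijection between $\mathcal{N}=2$ supermanifolds and triples.

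Finally, for the splitting criterion I would reinterpret $\omega_\mani$ as the class of the square-zero extension \eqref{project}. A projection is exactly a global \emph{algebra} section $\pi_0$ of $\iota_0$; such sections exist locally because the local model is split, and the difference of two local sections is a $\mathbb{C}$-linear derivation $\stsheafred\to Sym^2\mathcal{F}_\mani$, i.e.\ a section of $\mathcal{T}_\manir\otimes Sym^2\mathcal{F}_\mani$. The obstruction to gluing local sections into a global one is the associated \v{C}ech class, which coincides with $\omega_\mani$; thus a projection exists iff $\omega_\mani=0$, i.e.\ $\mani$ is non-projected iff $\omega_\mani\neq0$. I expect the main obstacle to be the triple-overlap computation of the second paragraph: one must expand the composition of transition functions to second order in the odd variables, keep the two tensor factors separate while $\partial_x$ transforms by the reduced Jacobian and $\theta^1\theta^2$ by $\det M$, and check that the quadratic part of $f$ and the $M\cdot M$ cross-terms assemble exactly into the cocycle identity in $\mathcal{T}_\manir\otimes Sym^2\mathcal{F}_\mani$, with all lower-order contributions cancelling by the cocycle conditions already holding for $\manir$ and $\mathcal{F}_\mani$.
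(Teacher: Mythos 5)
Your proposal is correct and is essentially the standard argument: the paper itself does not prove Theorem \ref{N=2} but defers to \cite{Manin} and \cite{NojaPhD}, and the proof given there is exactly your route — truncation of the transition functions using $\mathcal{J}_\mani^3=0$, packaging the top coefficient as a \v{C}ech $1$-cocycle in $\mathcal{T}_{\manir}\otimes Sym^2\mathcal{F}_\mani$, quotienting by the coboundaries coming from coordinate changes $x\mapsto x+h\,\theta^1\theta^2$, and identifying the class with the obstruction to splitting the even sequence \eqref{project}. Indeed the paper implicitly uses this very mechanism later when it reads off the cocycles $\omega_{ij}$ from the big-cell transition functions of $G(1|1;\mathbb{C}^{2|2})$.
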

\noindent The proof of the statement can be originally found in \cite{Manin} and it has been reproduced in full details in \cite{NojaPhD}.\\

\section{Non-Projected $\mathcal{N}= 2$ Supermanifolds over $\proj 2$}

\noindent Using Theorem \ref{N=2} of the previous section, in the recent \cite{CNR} all the non-projected $\mathcal{N}=2$ supermanifolds over the projective plane $\proj 2$ were described through their characterizing cohomological invariants and their transition functions have been given. These non-projected supermanifolds reveal interesting features.

We first set out conventions: we consider a set of homogeneous coordinates $[X_0 : X_1 : X_2]$ on $\proj 2$ and the set of the affine coordinates and their algebras over the three open sets of the covering 
$\mathcal{U}\defeq \{ \mathcal{U}_0, \mathcal{U}_1, \mathcal{U}_2\}$ of $\proj 2$. In particular, modulo $\mathcal{J}^2_\mani$, we have the following  
\begin{align} \label{TransMod1}
& \mathcal{U}_0 \defeq \left \{ X_0 \neq 0 \right \} \quad \rightsquigarrow \quad z_{10} \,\mbox{mod}\,{\mathcal{J}^2_{\mathpzc{M}}}\defeq \frac{X_1}{X_0}, \qquad  z_{20} \,\mbox{mod}\,{\mathcal{J}^2_{\mathpzc{M}}} \defeq \frac{X_2}{X_0};   \nonumber \\
& \mathcal{U}_1 \defeq \left \{ X_1 \neq 0 \right \} \quad \rightsquigarrow \quad z_{11}\,\mbox{mod}\,{\mathcal{J}^2_{\mathpzc{M}}} \defeq \frac{X_0}{X_1}, \qquad z_{21} \,\mbox{mod}\,{\mathcal{J}^2_{\mathpzc{M}}}\defeq \frac{X_2}{X_1};   \nonumber \\  
& \mathcal{U}_2 \defeq \left \{ X_2 \neq 0 \right \} \quad \rightsquigarrow \quad z_{12} \,\mbox{mod}\,{\mathcal{J}^2_{\mathpzc{M}}} \defeq \frac{X_0}{X_2}, \qquad z_{22} \,\mbox{mod}\,{\mathcal{J}^2_{\mathpzc{M}}} \defeq \frac{X_1}{X_2}.  
\end{align}
The transition functions between these charts reads
\begin{align}\label{transfzeta2}
\mathcal{U}_{0} \cap \mathcal{U}_{1} : \qquad & z_{10} \,\mbox{mod}\,{\mathcal{J}^2_{\mathpzc{M}}}= \frac{1}{z_{11}}\,\mbox{mod}\,{\mathcal{J}^2_{\mathpzc{M}}},  \quad & z_{20} &\,\mbox{mod}\,{\mathcal{J}^2_{\mathpzc{M}}}= \frac{z_{21} }{z_{11} } \,
\mbox{mod}\,{\mathcal{J}^2_{\mathpzc{M}}}; \nonumber   \\
\mathcal{U}_{0} \cap \mathcal{U}_{2} : \qquad & z_{10} \,\mbox{mod}\,{\mathcal{J}^2_{\mathpzc{M}}} = \frac{z_{22}}{z_{12}} \,\mbox{mod}\,{\mathcal{J}^2_{\mathpzc{M}}}, \quad & z_{20} & \,\mbox{mod}\,{\mathcal{J}^2_{\mathpzc{M}}}= \frac{1}{z_{12}}\,
\mbox{mod}\,{\mathcal{J}^2_{\mathpzc{M}}}; \nonumber \\
\mathcal{U}_{1} \cap \mathcal{U}_{2} : \qquad & z_{11} \,\mbox{mod}\,{\mathcal{J}^2_{\mathpzc{M}}} = \frac{z_{12}}{z_{22}} \,\mbox{mod}\,{\mathcal{J}^2_{\mathpzc{M}}}, & \quad z_{21} & \,\mbox{mod}\,{\mathcal{J}^2_{\mathpzc{M}}} = \frac{1}{z_{22}} \,
\mbox{mod}\,{\mathcal{J}^2_{\mathpzc{M}}}.
\end{align}
Also, we denote $\theta_{1i},\ \theta_{2i}$ a basis of the rank $0|2$ locally-free sheaf $\mathcal{F}_\mani$ on any of the open sets $\mathcal{U}_i$, for $i=0,1,2$, and, since $\mathcal{J}_{\mani}^3=0$, the transition functions among these bases will have the form
\begin{align}\label{transtheta}
\mathcal{U}_{i} \cap \mathcal{U}_{j} : \qquad &  \left(\begin{array}{l} \theta_{1i}\\ \theta_{2i}\end{array}\right)=M_{ij}\cdot \left(\begin{array}{l} \theta_{1j}\\ \theta_{2j}\end{array}\right),
\end{align}
with $M_{ij}$ a $2\times 2$ matrix with coefficients in $\mathcal{O}_{\proj 2}(\mathcal{U}_i\cap \mathcal{U}_j)$. Note that in the transformation \eqref{transtheta} one can write $M_{ij}$ as a matrix with coefficients given by some even rational functions of 
$z_{1j},z_{2j}$, because of the definitions (\ref{TransMod1}) and the facts that $\theta_{hj}\in \mathcal{J}_\mani$ and $\mathcal{J}^3_\mani=0$.

Finally we note the transformation law for the products $\theta_{1i}\theta_{2i}$ is given by 
\bear 
\theta_{1i}\theta_{2i}=(\det M_{ij})\theta_{1j}\theta_{2j}. 
\eear 
Since $\det M$ is a transition function for the invertible sheaf $Sym^2 \mathcal{F}_\mani \cong \mathcal{O}_{\proj 2} (-3)$ over $\mathcal{U}_{i} \cap \mathcal{U}_{j}$, this can be written, up to constant changes of bases in $\mathcal{F}\lfloor_{\mathcal{U}_{i}}$ and $\mathcal{F}\lfloor_{\mathcal{U}_{j}}$, in the more precise form 
\begin{equation}\label{transtheta2}
\theta_{1i}\theta_{2i}=\left(\frac{X_j}{X_i}\right)^3\theta_{1j}\theta_{2j}.
\end{equation}
The meaning is that we can identify the base $\theta_{1i}\theta_{2i}$ of $Sym^2 \mathcal{F}_\mani \lfloor_{\mathcal{U}_i}$ with the standard base $\frac{1}{X_i^3}$ of $\mathcal{O}_{\proj 2} (-3)$ over $\mathcal{U}_{i}$.\\
Having set these conventions and notations we can give the following theorem, whose detailed proof can be found in \cite{CNR}.

\begin{theorem}[Non-Projected $\mathcal{N} =2 $ Supermanifolds over $\proj 2$] \label{pi2} Every non-projected $\mathcal{N} = 2$ supermanifold over $\proj 2$ is characterised up to isomorphism by a triple $\proj 2_\omega (\mathcal{F}_\mani) \defeq  (\proj 2, \mathcal{F}_\mani, \omega)$ where $\mathcal{F}_\mani$ is a rank $0|2$ sheaf of $\mathcal{O}_{\proj 2}$-modules such that $Sym^2 \mathcal{F}_\mani \cong \mathcal{O}_{\proj 2} (-3)$ and $\omega$ is a non-zero cohomology class $\omega \in H^1 (\mathcal{T}_{\proj 2} (-3)).$\\
 The transition functions for an element of the family $\proj 2_\omega (\mathcal{F}_\mani) $ from coordinates on $\mathcal{U}_0$ to coordinates on $\mathcal{U}_1$ are given by
\begin{align} \label{eq:eventranf}
\left ( 
\begin{array}{c}
z_{10} \\ 
z_{20} \\
\theta_{10} \\
\theta_{20} 
\end{array}
\right ) = \left ( \begin{array}{c@{}}  
 \dfrac{1}{z_{11}}  \\
 \dfrac{z_{21}}{z_{11}} + \lambda \dfrac{\theta_{11} \theta_{21}}{(z_{11})^2} \\
M  \left (
\begin{array}{c}
\theta_{11} \\
\theta_{21} 
\end{array}
\right ) 
\end{array}
\right ) 
\end{align}
where $\lambda \in \mathbb{C}$ is a representative of the class $\omega \in H^1 (\mathcal{T}_{\proj 2} (-3)) \cong \mathbb{C}$ and $M$ is a $2\times 2$ matrix with coefficients in $\mathbb{C}[z_{11}, z_{11}^{-1},z_{21}]$ such that $\det M=\slantone{1}{z_{11}^3}$. 
Similar transformations hold between the other pairs of open sets.
\end{theorem}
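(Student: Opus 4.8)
The plan is to deduce the statement from the general classification of $\mathcal{N}=2$ supermanifolds in Theorem \ref{N=2}, specialised to $\manir = \proj 2$, and then to unravel the abstract cohomology class $\omega$ into explicit \v{C}ech data on the standard affine cover $\mathcal{U}$. By Theorem \ref{N=2}, any $\mathcal{N}=2$ supermanifold over $\proj 2$ is determined up to isomorphism by a triple $(\proj 2, \mathcal{F}_\mani, \omega_\mani)$ with $\omega_\mani \in H^1(\mathcal{T}_{\proj 2} \otimes Sym^2\mathcal{F}_\mani)$, and it is non-projected precisely when $\omega_\mani \neq 0$. Since $\mathcal{F}_\mani$ is purely odd of rank $0|2$, the symmetric square $Sym^2\mathcal{F}_\mani$ is in the ordinary sense the exterior square $\det \mathcal{F}_\mani$, hence an invertible sheaf on $\proj 2$, necessarily of the form $\mathcal{O}_{\proj 2}(d)$ for some $d \in \set{Z}$. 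The first thing to establish is therefore which values of $d$ admit a nonzero class, i.e.\ for which $d$ one has $H^1(\mathcal{T}_{\proj 2}(d)) \neq 0$.

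To settle this I would twist the Euler sequence
\begin{equation*}
0 \longrightarrow \mathcal{O}_{\proj 2}(d) \longrightarrow \mathcal{O}_{\proj 2}(d+1)^{\oplus 3} \longrightarrow \mathcal{T}_{\proj 2}(d) \longrightarrow 0
\end{equation*}
and pass to the long exact sequence in cohomology. Using the vanishing $H^1(\proj 2, \mathcal{O}_{\proj 2}(k)) = 0$ for every $k$, one obtains an inclusion $H^1(\mathcal{T}_{\proj 2}(d)) \hookrightarrow H^2(\mathcal{O}_{\proj 2}(d))$ together with the connecting map into $H^2(\mathcal{O}_{\proj 2}(d+1))^{\oplus 3}$. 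Serre duality on $\proj 2$ identifies this connecting map with the dual of the multiplication $H^0(\mathcal{O}(-d-4))^{\oplus 3} \to H^0(\mathcal{O}(-d-3))$, $(f_0,f_1,f_2) \mapsto \sum_i X_i f_i$. A short degree count then shows that $H^1(\mathcal{T}_{\proj 2}(d))$ vanishes for $d \geq -2$ (since $H^2(\mathcal{O}(d))=0$) and for $d \leq -4$ (since the multiplication is surjective, hence its dual injective), whereas for the single value $d = -3$ one finds $H^1(\mathcal{T}_{\proj 2}(-3)) \cong H^2(\mathcal{O}_{\proj 2}(-3)) \cong \set{C}$. This forces $Sym^2\mathcal{F}_\mani \cong \mathcal{O}_{\proj 2}(-3)$ for every non-projected example and shows that $\omega$ lives in a one-dimensional space, so it is pinned down by a single scalar $\lambda \in \set{C}$.

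It then remains to convert this data into the transition functions. The odd block is the easiest: by \eqref{transtheta} the odd coordinates transform through a matrix $M = M_{01}$, and by \eqref{transtheta2} its determinant is the transition function of $Sym^2\mathcal{F}_\mani \cong \mathcal{O}_{\proj 2}(-3)$ from $\mathcal{U}_0$ to $\mathcal{U}_1$, namely $(X_1/X_0)^3 = z_{11}^{-3}$; hence $\det M = \slantone{1}{z_{11}^3}$, with entries rational in $z_{11}, z_{11}^{-1}, z_{21}$. For the even block, the non-split extension $0 \to Sym^2\mathcal{F}_\mani \to \mathcal{O}_{\mani,0} \to \stsheafred \to 0$ means that the naive (reduced) change of coordinates \eqref{transfzeta2} is corrected by a term valued in the local generator $\theta_{11}\theta_{21}$ of $Sym^2\mathcal{F}_\mani$, the coefficient being a \v{C}ech $1$-cochain for $\mathcal{T}_{\proj 2}(-3)$ representing $\omega$. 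Choosing the representative that generates $H^1(\mathcal{T}_{\proj 2}(-3))$, one finds that only the $z_{20}$-coordinate receives the correction $\lambda\, \slantone{\theta_{11}\theta_{21}}{(z_{11})^2}$, which reproduces exactly \eqref{eq:eventranf}.

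The genuine obstacle, and the step I expect to require the most care, is verifying that this specific cochain is a cocycle representing the generator of the one-dimensional group $H^1(\mathcal{T}_{\proj 2}(-3))$ and that the resulting even and odd transition functions satisfy the cocycle condition on the triple overlap $\mathcal{U}_0 \cap \mathcal{U}_1 \cap \mathcal{U}_2$. Concretely, the compatibility of the $\theta\theta$-corrections across the three charts is constrained by the matrices $M_{ij}$ through $\det M_{ij}$, since the product $\theta_{1i}\theta_{2i}$ in which the correction is valued transforms by $\det M_{ij}$; checking that these conditions are simultaneously satisfiable — rather than merely imposing an arbitrary cochain — is precisely what encodes that $\omega$ is a well-defined class in $H^1(\mathcal{T}_{\proj 2}(-3))$, and it is here that the one-dimensionality computed above guarantees both existence and uniqueness up to the scalar $\lambda$.
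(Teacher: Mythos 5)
The paper does not actually prove this theorem --- it explicitly defers the detailed proof to \cite{CNR} --- but the conventions it assembles immediately beforehand, namely \eqref{TransMod1}, \eqref{transfzeta2}, \eqref{transtheta} and \eqref{transtheta2}, are exactly the ingredients your argument uses, so your route (specialise Theorem \ref{N=2}, read $\det M = 1/z_{11}^3$ off \eqref{transtheta2}, and realise $\omega$ as a \v{C}ech correction to the even transition functions) is the intended one; moreover your Euler-sequence computation showing $H^1(\mathcal{T}_{\proj 2}(d))\neq 0$ only for $d=-3$ correctly supplies the reason, left implicit in the paper, why $Sym^2\mathcal{F}_\mani$ must be $\mathcal{O}_{\proj 2}(-3)$ and why $\omega$ is captured by a single scalar $\lambda$. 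The one step you flag but do not carry out --- exhibiting the normal-form representative in which only $z_{20}$ receives the correction $\lambda\,\theta_{11}\theta_{21}/(z_{11})^2$ and verifying the cocycle condition on $\mathcal{U}_0\cap\mathcal{U}_1\cap\mathcal{U}_2$ --- is precisely the computation deferred to \cite{CNR}, and nothing in your setup would obstruct it.
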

\noindent We remark that the form of transition functions above is shared by all the supermanifolds $\proj 2_\omega (\mathcal{F}_\mani)$, regardless the form of its fermionic sheaf $\mathcal{F}_\mani$, which is encoded in the matrix $M$. \\
Some remarkable properties of this family of non-projected supermanifolds has been given by the authors in \cite{CNR}. We condensate these results in the following theorem.
\begin{theorem}\label{embeddingth} Let $\mani $ be a non-projected supermanifold in the family $\proj {2}_\omega(\mathcal{F}_\mani).$ Then: \begin{enumerate}
\item $\mani$ is non-projective, that is $\mani$ cannot be embedded into any projective superspace of the kind $\proj {n|m}$;
\item $\mani$ can be embedded into a super Grassmannian. \\
In particular, let $\mathcal{T}_\mani$ be the tangent sheaf of $\mani$, if we let $V \defeq H^0(Sym^k\mathcal{T}_\mani)$, for any $k\gg 0$ the evaluation map $ev_\mani : V\otimes\mathcal{O}_\mani\to Sym^k\mathcal{T}_\mani$ induces an embedding \bear 
\Phi_k:\mani\rightarrow  G(2k|2k, V).\eear
\end{enumerate}
\end{theorem}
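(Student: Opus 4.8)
The plan is to prove the two assertions separately, reducing both to the single nonzero class $\omega$ supplied by Theorem \ref{pi2}. For (1) I would argue by contradiction. Suppose $\mani$ admitted a closed embedding $\mani \hookrightarrow \proj {n|m}$; pulling back the tautological even invertible sheaf $\mathcal{O}_{\proj {n|m}}(1)$ yields an even invertible sheaf $\mathcal{L}$ on $\mani$ whose restriction to $\manir = \proj 2$ is very ample, hence $\mathcal{L}\lfloor_{\manir} \cong \mathcal{O}_{\proj 2}(d)$ with $d \geq 1$. For an $\mathcal{N}=2$ supermanifold one has $\mathcal{J}_{\mani,0} = Sym^2 \mathcal{F}_\mani \cong \mathcal{O}_{\proj 2}(-3)$ with $\mathcal{J}_{\mani,0}^2 = 0$, so $1 + \mathcal{J}_{\mani,0} \cong (\mathcal{O}_{\proj 2}(-3), +)$ and the multiplicative structural sequence $0 \to \mathcal{O}_{\proj 2}(-3) \to \mathcal{O}_{\mani,0}^\ast \to \mathcal{O}_{\proj 2}^\ast \to 0$ produces a connecting homomorphism $\delta : \mbox{Pic}(\proj 2) \to H^2(\proj 2, \mathcal{O}_{\proj 2}(-3))$. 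The existence of the lift $\mathcal{L}$ of $\mathcal{O}_{\proj 2}(d)$ then forces $\delta(\mathcal{O}_{\proj 2}(d)) = 0$.

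Here I would first record the relevant cohomology: $H^1(\proj 2, \mathcal{O}_{\proj 2}(-3)) = 0$ and $H^2(\proj 2, \mathcal{O}_{\proj 2}(-3)) \cong \set C$, the latter being identified with $H^1(\mathcal{T}_{\proj 2}(-3)) \cong \set C$ — the group in which $\omega$ lives — through the connecting map of the Euler sequence twisted by $\mathcal{O}_{\proj 2}(-3)$. Since $\delta$ is a homomorphism out of $\mbox{Pic}(\proj 2) = \set Z\langle \mathcal{O}_{\proj 2}(1)\rangle$, we get $0 = \delta(\mathcal{O}_{\proj 2}(d)) = d\,\delta(\mathcal{O}_{\proj 2}(1))$, and $d \neq 0$ in $\set C$ forces $\delta(\mathcal{O}_{\proj 2}(1)) = 0$. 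The crux is to show that, on the contrary, $\delta(\mathcal{O}_{\proj 2}(1)) \neq 0$: I would compute it as a \v{C}ech $2$-cocycle directly from the even transition functions \eqref{eq:eventranf}, in which the correction term $\lambda\, \theta_{11}\theta_{21}/(z_{11})^2$ is exactly the obstruction feeding $\delta$, and identify $\delta(\mathcal{O}_{\proj 2}(1))$ with a nonzero multiple of $\omega \in H^1(\mathcal{T}_{\proj 2}(-3))$. As $\mani$ is non-projected, $\omega \neq 0$ by Theorem \ref{pi2}, which gives the contradiction. This is the same obstruction appearing in the $H^2(Sym^{2k}\mathcal{F}_\mani)$ picture of \cite{BPW}, where $k$ ranges up to $\mbox{rank}\,\mathcal{F}_\mani/2 = 1$, so that only $H^2(Sym^2\mathcal{F}_\mani) = H^2(\mathcal{O}_{\proj 2}(-3))$ survives.

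For (2) the strategy is the super-analogue of the Grassmannian embedding construction, with $Sym^k \mathcal{T}_\mani$ playing the role of a very ample line bundle. A fibrewise computation of the super-symmetric power of a $2|2$ space shows that $Sym^k \mathcal{T}_\mani$ is locally free of rank $2k|2k$, matching the target $G(2k|2k,V)$ (the even rank being $(k+1)+(k-1) = 2k$ and the odd rank $2k$). I would then prove that for $k \gg 0$ the sheaf $Sym^k \mathcal{T}_\mani$ is globally generated, so that $ev_\mani : V \otimes \mathcal{O}_\mani \to Sym^k \mathcal{T}_\mani$ is surjective onto a locally free quotient of rank $2k|2k$; this uses the ampleness of $\mathcal{T}_{\proj 2}$ on $\proj 2$ to dominate, for large $k$, the negative twists by $\mathcal{F}_\mani$ carried by the odd directions. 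Such a surjection defines, via the functor-of-points description of the super Grassmannian, a morphism $\Phi_k : \mani \to G(2k|2k,V)$ classifying the quotient $Sym^k\mathcal{T}_\mani$.

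The heart of (2), and the main obstacle of the whole statement, is to upgrade $\Phi_k$ to a \emph{closed embedding}. By the embedding criterion recalled above this amounts to surjectivity of the comorphism $\Phi_k^\sharp$, which I would establish by showing that $\Phi_k$ is injective on points and that its differential is injective at each point, \emph{including the odd tangent directions}. Separation of the reduced points and of the even tangent vectors follows from ampleness on $\proj 2$ once $k$ is large; the genuinely super-geometric difficulty is that the global sections of $Sym^k \mathcal{T}_\mani$ must also separate the odd tangent vectors, i.e. surject onto the jets involving the generators $\theta_{1i},\theta_{2i}$ of $\mathcal{F}_\mani$. This is where the explicit transition functions \eqref{eq:eventranf} are used once more: the non-projectedness couples the even and odd coordinates so that high symmetric powers of $\mathcal{T}_\mani$ detect all the nilpotent directions. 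Choosing a single $k \gg 0$ for which global generation and both separation properties hold simultaneously then yields the embedding $\Phi_k$, completing the proof.
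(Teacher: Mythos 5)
Your proposal is correct and follows essentially the same route as the paper, which itself defers the detailed proof to \cite{CNR} but carries out exactly these two mechanisms in its later sections: the non-projectivity via the coboundary $\delta : \mbox{Pic} \rightarrow H^2(Sym^2\mathcal{F}_\mani)$ of the multiplicative structural sequence (done verbatim for $G(1|1;\mathbb{C}^{2|2})$ with $\mathcal{O}_{\proj 1_0\times\proj 1_1}(-2,-2)$ in place of $\mathcal{O}_{\proj 2}(-3)$), and the embedding via global generation of $Sym^k\mathcal{T}_\mani$ together with the universal property of the super Grassmannian (done explicitly with $k=1$ for the decomposable example). The two computations you defer — the \v{C}ech verification that $\delta(\mathcal{O}_{\proj 2}(1))$ is a nonzero multiple of $\lambda$, and the separation of odd tangent directions for $k\gg 0$ — are precisely the steps the paper and \cite{CNR} supply, so your plan is sound as stated.
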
 
\noindent We observe that the theorem proves the existence of an embedding into some super Grassmannian, but it is not effective in that it does not give an esteem of the symmetric power of the tangent sheaf needed in order to set up the embedding. In the next sections we will first review the geometry of super Grassmannians and then, we will treat explicitly an interesting example of embedding into a super Grassmannian, by choosing a decomposable fermionic sheaf satisfying the hypotheses of theorem \ref{pi2}.

\section{Elements of Super Grassmannians} \label{superG}

\noindent This section is dedicated to the introduction of some elements of geometry of super Grassmannians. We remark that the this section contains no original result and it is fully expository: all of the results are originally due to Y. Manin and his school, see in particular \cite{Manin}, \cite{ManinNC}, \cite{PenSko}. Nonetheless, we believe that since the cited literature is somewhat difficult and largely sketchy in the proofs of the various statements, it might be useful to have the constructions revised and readily at hand. In the present section our emphasis will be, anyway, on the non-projectedness and non-projectivity issues.

Super Grassmannians are the supergeometric generalisation of the ordinary Grassmannians. This means that $G(a|b; V^{n|m})$ is a universal parameter space for $a|b$-dimensional linear subspaces of a given $n|m$-dimensional space $V^{n|m}$. We will deal with the simplest possible situation, choosing the $n|m$-dimensional space $V^{n|m}$ to be a super vector space of the kind $\mathbb{C}^{n|m}$.\\
We start reviewing how to construct a super Grassmannian by patching together the \virgolette charts'' that cover it: this is a nothing but a generalization of the usual construction of ordinary Grassmannians making use of the so-called \emph{big cells}. 
\begin{enumerate}
\item We let $\mathbb{C}^{n|m}$ be such that $n|m = c_0|c_1 + d_0|d_1$ and look at $\mathbb{C}^{n|m}$ as given by $\mathbb{C}^{c_0 + d_0} \oplus (\Pi \mathbb{C})^{c_1 + d_1}$. This is obviously freely-generated, and we will write its elements as row 
vectors with respect to a certain basis, $\mathbb{C}^{n|m} = \mbox{Span} \{ e_1^{0}, \ldots, e_n^{0} | e^{1}_1, \ldots, e^{1}_m \},$ where the upper indices refer to the $\mathbb{Z}_2$-parity.
\item Consider a collection of indices $I = I_0 \cup I_1$ such that $I_0 $ is a collection of $d_0 $ out of the $n $ indices of $\mathbb{C}^n$ and $I_1$ is a collection of $d_1$ indices out of $m$ indices of $\Pi \mathbb{C}^m$. 
If $\mathcal{I}$ is the set of such collections of indices $I$ one gets 
\bear
\mbox{card}(\mathcal{I}) = \mbox{card} (\mathcal{I}_{0} \times \mathcal{I}_{1}) = {n \choose d_0 } \cdot {m \choose d_1 }.
\eear
This will give the number of \emph{super big cells} covering the super Grassmannian.
\item Choosing an element $I \in \mathcal{I}$, we associate to it a set of even and odd (complex) variables, we call them $\{ x^{\alpha \beta}_{I} \,| \, \xi^{\alpha \beta}_I \}$. These are arranged as to fill in the places of a $d_0|d_1 \times n|m = a|b \times (c_0 + d_0) | (c_1+d_1) $ matrix a way such that the columns having indices in $I \in \mathcal{I}_I$ forms a $(d_0 + d_1) \times (d_0 + d_1 )$ unit matrix if brought together. 
To makes this clear, for example, a certain choice of ${I} \in \mathcal{I}$ yields the following
\begin{equation}\label{matrixstform}
\mathcal{Z}_{{I}} \defeq 
\left (
\begin{array}{ccc|ccc||ccc|ccc}
& &  & 1 \; & & & & & & & & \\
\;  & \; x_I \; & \; &  & \ddots & & & 0 & &\;  &\; \xi_I \; &\;  \\
& & & & & \; 1& & & & & & \\
\hline \hline
& & & & & & 1\; & & & & & \\
&\; \xi_I \; & & & 0& & &\ddots & & &\;  x_I \; & \\
& & & & & & & &\; 1 & & & 
\end{array}
\right ),
\end{equation}
where we have chosen to pick that particular $ I \in \mathcal{I}$ that underlines the presence of the $(d_0 + d_1) \times (d_0 + d_1 )$ unit matrix.
\item We now define the superspace $\mathcal{U}_I \rightarrow \mbox{Spec}\,  \mathbb{C} \cong \{ pt\}$ to be the analytic superspace 
$\{ pt \} \times \mathbb{C}^{d_0 \cdot c_0 + d_1 \cdot c_1 | d_0\cdot c_1 + d_1\cdot c_0} \cong \mathbb{C}^{d_0 \cdot c_0 + d_1 \cdot c_1 | d_0\cdot c_1 + d_1\cdot c_0} $, where $\{ x^{\alpha \beta }_I \, | \, \xi^{\alpha \beta}_I \}$ are the complex 
coordinates over the point. Whenever is represented as above, the superspace related to $\mathcal{U}_I$ is called a \emph{super big cell} of the Grassmannian, and denoted with $\mathcal{Z}_I $ or, again, simply by $\mathcal{U}_I $ (which encodes the topological information).
\item We now show how to patch together two superspaces $\mathcal{U}_I$ and $\mathcal{U}_J$ for two different $I, J \in \mathcal{I}$. If $\mathcal{Z}_I$ is the super big cell related to $\mathcal{U}_I$, we consider the super submatrix $\mathcal{B}_{IJ}$ 
formed by the columns having indices in $J$. Let $\mathcal{U}_{IJ} = \mathcal{U}_I \cap \mathcal{U}_J$ be the (maximal) sub superspace of $\mathcal{U}_I$ such that on $\mathcal{U}_{IJ}$ the submatrix $\mathcal{B}_{IJ}$ is invertible. As usual, the odd 
coordinates do not affect the invertibility, so that it is enough that the two determinants of the even parts of the matrix $\mathcal{B}_{IJ}$ (that are respectively a $d_0 \times d_0 $ and a $d_1 \times d_1$ matrix) are different from zero. 
When this is the case, on the superspace $\mathcal{U}_{IJ}$ one has common coordinates $\{x^{\alpha \beta}_I \, | \, \xi^{\alpha \beta}_{I} \}$ and $\{ x^{\alpha \beta}_J \, |\, \xi^{\alpha \beta}_J \}$, and the rule to pass from one system of coordinates 
to the other one is provided by $\mathcal{Z}_J  = \mathcal{B}^{-1}_{IJ} \mathcal{Z}_I.$ 

\noindent For example, let us consider the following two super big cells:
\bear
\mathcal{Z}_I \defeq \left ( \begin{array}{ccc||cc}
1 & 0 & x_1 & 0 & \xi_1 \\
0 & 1 & x_2 & 0 & \xi_2 \\
\hline \hline 
0 & 0 & \eta & 1 & y
\end{array}
\right ),
\qquad \qquad 
\mathcal{Z}_J \defeq \left ( \begin{array}{ccc||cc}
1 & \tilde{x}_1 & 0 & 0 & \tilde{\xi}_1 \\
0 & \tilde{x}_2 & 1 & 0 & \tilde{\xi}_2 \\
\hline \hline 
0 & \tilde \eta & 0 & 1 & \tilde{y}
\end{array}
\right ).
\eear          
Looking at $\mathcal{Z}_I$, we see that the columns belonging to $J$ are the first, the third and the fourth, so that 
\bear
\mathcal{B}_{IJ} = \left (
\begin{array}{cc|c}
1 & x_1 & 0 \\
0 & x_2 & 0 \\
\hline
0 & \eta & 1
\end{array}
\right ).
\eear
Computing the determinant of the upper-right $2\times2$ matrix, we have invertibility of $\mathcal{B}_{IJ}$ corresponds to $x_2 \neq 0$ (as seen from the point of view of $\mathcal{U}_I$. Likewise one would have found $\tilde x_2 \neq 0$ by looking at 
$\mathcal{Z}_J$ and $\mathcal{U}_J$). The inverse of $\mathcal{B}^{-1}_{IJ}$ is 
\bear
\mathcal{B}^{-1}_{IJ} = \left (
\begin{array}{cc|c}
1 & -x_1/x_2 & 0 \\
0 & 1/x_1 & 0 \\
\hline
0 & \eta/x_2 & 1
\end{array}
\right )
\eear
so that we can compute the coordinates of $\mathcal{U}_J$ as functions of the ones of $\mathcal{U}_I$ via the rule $\mathcal{Z}_J = \mathcal{B}^{-1}_{IJ} \mathcal{Z}_I$:
\bear
\left ( \begin{array}{ccc||cc}
1 & \tilde{x}_1 & 0 & 0 & \tilde{\xi}_1 \\
0 & \tilde{x}_2 & 1 & 0 & \tilde{\xi}_2 \\
\hline \hline 
0 & \tilde \eta & 0 & 1 & \tilde{y}
\end{array}
\right )
= \left ( \begin{array}{ccc||cc}
1 & -x_1/x_2 & 0 & 0 & \xi_1 - \xi_2 x_1/x_2 \\
0 & 1/x_2 & 1 & 0 & \xi_2/x_2 \\
\hline \hline 
0 & -\eta/x_2 & 0 & 1 & y_1 - \eta \xi_2/ x_2
\end{array}
\right ),
\eear
so that the change of coordinates can be read out of this. Observe that the denominator $x_2 $ is indeed invertible on $\mathcal{U}_{IJ}.$
\item Patching together the superspaces $\mathcal{U}_I$ one obtains the Grassmannian supermanifold $G(d_0|d_1; \mathbb{C}^{n|m})$ as the quotient supermanifold 
\bear
G (d_0|d_1; \mathbb{C}^{n|m}) \defeq \slanttwo{\bigcup_{I \in \mathcal{I}}}{\mathcal{R}},
\eear 
where we have written $\mathcal{R}$ for the equivalence relations generated by the change of coordinates that have been described above. Notice that, as a (complex) supermanifold a super Grassmannian has dimension 
\bear
\dim_{\mathbb{C}} G(d_0 |d_1; \mathbb{C}^{n|m}) = d_0  (n-d_0) + d_1  (m-d_1) | d_0 (m-d_1) + d_1 (n-d_0). 
\eear  
We stress that the maps $\psi_{\mathcal{U}_I} : \mathcal{U}_I \rightarrow G(d_0|d_1; \mathbb{C}^{n|m})$ are isomorphisms onto (open) sub superspaces of the super Grassmannian, so that the various super big cells offer a \emph{local} description of it, 
in the same way a usual (complex) supermanifold is locally isomorphic to a superspace of the kind $\mathbb{C}^{n|m}.$  
\end{enumerate}
Clearly, the easiest possible example of super Grassmannians are projective superspaces, that are realised as $\proj {n|m} = G (1|0; \mathbb{C}^{n+1|m})$, exactly as in the ordinary case: these are \emph{split} supermanifolds, a feature that they do not in general 
share with a generic Grassmannian $G (d_0|d_1; \mathbb{C}^{n|m})$, as we shall see in a moment. \\
For convenience, in what follows we call $G$ a super Grassmannian of the kind $G(d_0|d_1; \mathbb{C}^{n|m})$ and we give the following, see \cite{Manin}.
\begin{definition}[Tautological Sheaf on a Super Grassmannian] Let $G$ be a super Grassmannian and let it be covered by the super big cells $\{ \mathcal{U}_I\}_{I \in \mathcal{I}}$. We call tautological sheaf $\mathcal{S}_G$ of the super Grassmannian $G$ the sheaf of locally-free 
$\mathcal{O}_G$-modules of rank $d_0|d_1$ defined as
\bear
\mathcal{U} \cap \mathcal{U}_I \longmapsto \mathcal{S}_G (\mathcal{U} \cap \mathcal{U}_I) \defeq \big \langle \mbox{\emph{rows of the matrix }} \mathcal{Z}_I \big \rangle_{\mathcal{O}_G(\mathcal{U} \cap \mathcal{U}_I)}.
\eear 
\end{definition}
\noindent Notice that this definition is well-posed, since one has that $\mathcal{S}_{G} (\mathcal{U}_I) \lfloor_{\mathcal{U}_{IJ}} $ and $\mathcal{S}_{G} ( \mathcal{U}_J ) \lfloor_{\mathcal{U}_{JI}}$ get identified by means of 
the transition functions $\mathcal{B}_{IJ}.$ \\
One can have insights about the geometry of a super Grassmannian by looking at its reduced space - which, we recall, encloses all the topological information -, and also at the filtration of its trivial sheaf $\mathcal{O}_G$. \\
We start observing that given a super Grassmannian $G$, one automatically has two ordinary \emph{even} sub Grassmannians.
\begin{definition}[$G_0$ and $G_1$] Let $G = G(d_0|d_1; \mathbb{C}^{n|m})$ be a super Grassmannian. Then we call $G_0$ and $G_1$ the two purely even sub Grassmannians defined as
\bear
G_0 \defeq G(d_0|0; \mathbb{C}^{n|0}), \qquad \qquad G_1 \defeq G (0|d_1; \mathbb{C}^{0|m}).
\eear 
\end{definition}
\noindent Given a super big cell $\mathcal{U}_I$, then $G_0$ and $G_1$ can be visualized as the upper-left and the lower-right part respectively and they come endowed with their tautological sheaves, we call them $\mathcal{S}_0 $ and $\mathcal{S}_1$. Notice, though, 
that $\mathcal{S}_1$ defines a sheaf of locally-free $\mathcal{O}_{G_1}$-modules and, as such, it has rank $0|d_1$.\\
Let us now consider an ordinary even complex Grassmannian $G$ of the kind $G (d; \mathbb{C}^n)$ together with its tautological sheaf $\mathcal{S}_G$. One can then also define the \emph{sheaf orthogonal to the tautological sheaf}, we call it $\widetilde{\mathcal{S}}$, whose dual fits into 
the short exact sequence
\bear \label{tautclassic}
\xymatrix@R=1.5pt{
0 \ar[rr] && \mathcal{S}_G \ar[rr] && \mathcal{O}_{G}^{\oplus n} \ar[rr] && \widetilde{\mathcal{S}}_G^\ast \ar[rr] &&  0.
}  
\eear
Notice that in the case the Grassmannian corresponds to a certain projective space $G(1|0; \mathbb{C}^{n+1}) = \proj n$, the sheaf orthogonal to the tautological sheaf can be red off the Euler exact sequence twisted by the tautological sheaf itself 
$\mathcal{S}_{\proj n} = \mathcal{O}_{\proj n} (-1)$, and, indeed, we have that $\widetilde{\mathcal{S}}_G^* \cong \mathcal{T}_{\proj n} (-1)$, so that $\widetilde{\mathcal{S}}_G \cong \Omega^1_{\proj n} (+1).$\\
In the case of a super Grassmannian $G(d_0|d_1; n|m)$ the sequence (\ref{tautclassic}) gets generalized to the canonical sequence
\bear \label{tautsuper}
\xymatrix@R=1.5pt{
0 \ar[rr] && \mathcal{S}_G \ar[rr] && \mathcal{O}_{G}^{\oplus n|m} \ar[rr] && \widetilde{\mathcal{S}}_G^\ast \ar[rr] &&  0.
}  
\eear
\noindent Recalling that $\mbox{Gr}\, \mathcal{O}_G \defeq \bigoplus_{i}^m \mbox{Gr}_i\,\mathcal{O}_G$ and $\mbox{Gr}_i \, \mathcal{O}_G \defeq \mathcal{J}_G^i /\mathcal{J}^{i+1}_G$, we now have all the ingredients to state the following theorem, whose proof is contained in \cite{Manin}.
\begin{theorem} \label{redg} Let $G = G(d_0|d_1; \mathbb{C}^{n|m})$ be a super Grassmannian and let $G_0$ and $G_1$ their even sub Grassmannians together with the sheaves $\mathcal{S}_0, \mathcal{S}_1$ and 
$\widetilde{\mathcal{S}}_0, \widetilde{\mathcal{S}}_1$. Then the following (canonical) isomorphisms hold true
\begin{itemize}
\item[1)] $G_{red} \cong  G_0 \times G_1$; 
\item[2)] $\mbox{\emph{Gr}}\, \mathcal{O}_G \cong Sym \,(\mathcal{S}_0 \otimes \widetilde{\mathcal{S}}_1 \oplus \widetilde{\mathcal{S}}_0 \otimes \mathcal{S}_1)$,
\end{itemize} 
where by $Sym$ we mean the super-symmetric algebra over $\mathcal{O}_{G_0 \times G_1}.$ 
\end{theorem}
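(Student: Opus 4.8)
The plan is to argue chart-by-chart on the super big cells $\{\mathcal{U}_I\}_{I\in\mathcal{I}}$ and their matrices $\mathcal{Z}_I$, and then to read off both statements from the behaviour of the gluing matrices $\mathcal{B}_{IJ}$ under the rule $\mathcal{Z}_J=\mathcal{B}_{IJ}^{-1}\mathcal{Z}_I$. Throughout I would write each big cell in block form
\[
\mathcal{Z}_I=\begin{pmatrix} A & \Xi \\ H & D\end{pmatrix},
\]
where $A$ is the $d_0\times n$ even block, $D$ the $d_1\times m$ even block, and $\Xi$ (size $d_0\times m$), $H$ (size $d_1\times n$) are the two odd blocks; the columns indexed by $I_0$ inside $A$ and by $I_1$ inside $D$ make up the two unit matrices, and the free entries of $A,D,\Xi,H$ are the coordinates on $\mathcal{U}_I$.

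For statement 1) I would observe that passing to $G_{red}$ means setting all odd coordinates, i.e.\ all entries of $\Xi$ and $H$, to zero. Then $\mathcal{Z}_I$ becomes block-diagonal, the gluing matrix $\mathcal{B}_{IJ}$ becomes block-diagonal with an even-even block $\bar A_{J_0}$ and an odd-odd block $\bar D_{J_1}$, and the rule $\mathcal{Z}_J=\mathcal{B}_{IJ}^{-1}\mathcal{Z}_I$ decouples into the independent gluings of the $A$-blocks and of the $D$-blocks. These are exactly the big-cell presentations of $G_0=G(d_0;\mathbb{C}^{n})$ and $G_1=G(0|d_1;\mathbb{C}^{0|m})$; since the index set factors as $\mathcal{I}=\mathcal{I}_0\times\mathcal{I}_1$, the patched reduced space is the product $G_0\times G_1$, proving 1).

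For statement 2) I would first reduce to the degree-one piece $\mathcal{J}_G/\mathcal{J}_G^2$. On each big cell $\mathcal{O}_G(\mathcal{U}_I)$ is freely generated over $\mathcal{O}_{G_{red}}(\mathcal{U}_I)$ by the odd coordinates, so the $\mathcal{J}_G$-adic associated graded is locally the free supercommutative algebra on $\mathcal{J}_G/\mathcal{J}_G^2$; as these local identifications are compatible with the gluing, one gets globally $\mbox{Gr}\,\mathcal{O}_G\cong Sym_{\mathcal{O}_{G_0\times G_1}}(\mathcal{J}_G/\mathcal{J}_G^2)$, and it remains to identify the degree-one sheaf. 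I would then compute the transition law of the odd coordinates modulo $\mathcal{J}_G^2$: since the off-diagonal blocks of $\mathcal{B}_{IJ}$ are odd, its inverse modulo $\mathcal{J}_G^2$ is a block matrix with diagonal blocks $(\bar A_{J_0}^{-1},\bar D_{J_1}^{-1})$ and odd off-diagonal corrections. Substituting into $\mathcal{Z}_J=\mathcal{B}_{IJ}^{-1}\mathcal{Z}_I$ and reading off the free entries shows that, modulo $\mathcal{J}_G^2$, the block $\Xi$ glues by $\bar A_{J_0}^{-1}$ on the left and by the induced quotient action of $\mathbb{C}^m$ by $\mathcal{S}_1$ on the right, while $H$ glues by $\bar D_{J_1}^{-1}$ on the left and by the quotient action of $\mathbb{C}^n$ by $\mathcal{S}_0$ on the right. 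Comparing these with the transition functions of the tautological and orthogonal sheaves read off from the canonical sequence \eqref{tautsuper} restricted to $G_0$ and $G_1$, one finds exactly the transition functions of $\mathcal{S}_0\otimes\widetilde{\mathcal{S}}_1$ and of $\widetilde{\mathcal{S}}_0\otimes\mathcal{S}_1$, the $\Pi$-shift accounting for the oddness; this gives $\mathcal{J}_G/\mathcal{J}_G^2\cong(\mathcal{S}_0\otimes\widetilde{\mathcal{S}}_1)\oplus(\widetilde{\mathcal{S}}_0\otimes\mathcal{S}_1)$ and hence 2).

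The hard part will be the bookkeeping in this last step: taking the super-inverse of $\mathcal{B}_{IJ}$ modulo $\mathcal{J}_G^2$ correctly, keeping straight which factor glues as a tautological sheaf and which as its orthogonal quotient (equivalently, distinguishing a sheaf from its dual in the natural pairing), and tracking the parity shift so that the ranks $d_0(m-d_1)$ and $d_1(n-d_0)$ and the oddness land on the correct sides. Once the degree-one transition functions are matched, promoting the isomorphism to all of $\mbox{Gr}\,\mathcal{O}_G$ is formal from the local freeness established at the outset.
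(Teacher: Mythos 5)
Your outline is correct and is essentially the standard big-cell argument: the paper itself does not prove Theorem \ref{redg} but defers to Manin, whose proof proceeds exactly as you describe (decoupling of the reduced gluing matrices for part 1, reduction of part 2 to the degree-one piece $\mathcal{J}_G/\mathcal{J}_G^2$ followed by reading off its transition law from $\mathcal{Z}_J=\mathcal{B}_{IJ}^{-1}\mathcal{Z}_I$ modulo $\mathcal{J}_G^2$), and the paper's own identification of $\mathcal{F}_G$ for $G(1|1;\mathbb{C}^{2|2})$ in Section \ref{superG} is precisely your degree-one step carried out in that special case. The only outstanding work is the dualization bookkeeping you already flag --- confirming that the coordinate frame given by the free entries of $\Xi$ transforms as a frame of $\mathcal{S}_0\otimes\widetilde{\mathcal{S}}_1$ rather than of a dual --- and this checks out against the paper's explicit $G(1|1;\mathbb{C}^{2|2})$ transition functions, where $\xi$ acquires bidegree $(-1,-1)$ in agreement with $\mathcal{S}_0\otimes\widetilde{\mathcal{S}}_1\cong\Pi\,\mathcal{O}_{\proj 1_0\times\proj 1_1}(-1,-1)$.
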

\noindent The fundamental example, yet enclosing all the features characterizing the peculiar geometry of super Grassmannians, is given by $G(1|1, \mathbb{C}^{2|2})$ - which is of dimension $2|2$. We now study its geometry in some details. \vspace{6pt}\\
\noindent {\bf The Geometry of \boldmath{${G (1|1;{\pmb{{\mathbb{C}}}}^{2|2} )}$}:} we start studying the geometry of $G (1|1; \mathbb{C}^{2|2})$, we call it $G$ for short, from its reduced manifold which is easily identified using the 
previous Theorem \ref{redg}. 
\begin{lemma}[$G(1|1; \mathbb{C}^{2|2})_{red} \cong \proj 1_0 \times \proj 1_1$] Let $G$ be the super Grassmannian as above, then
\bear
G(1|1; \mathbb{C}^{2|2})_{red} \cong \proj 1_0 \times \proj 1_1.
\eear
\end{lemma}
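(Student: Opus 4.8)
The plan is to read the result directly off Theorem \ref{redg}(1) and then identify the two resulting even factors by hand. Setting $d_0 = d_1 = 1$ and $n = m = 2$ in that theorem gives a canonical isomorphism $G_{red} \cong G_0 \times G_1$, so the entire content of the lemma reduces to recognising the purely even sub Grassmannians $G_0 = G(1|0; \mathbb{C}^{2|0})$ and $G_1 = G(0|1; \mathbb{C}^{0|2})$ as two copies of $\proj 1$.

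First I would dispose of $G_0$. By definition $G_0 = G(1|0; \mathbb{C}^{2|0})$ is the ordinary Grassmannian $G(1; \mathbb{C}^2)$ of lines in a two-dimensional vector space, which is $\proj 1$; I label this factor $\proj 1_0$. The factor $G_1 = G(0|1; \mathbb{C}^{0|2})$ is a purely odd Grassmannian, and the only genuine point is to see that it too is an ordinary $\proj 1$. I would argue this by parity reversal: a $0|1$-dimensional subspace of $\Pi\mathbb{C}^2$ is the same datum as a $1$-dimensional subspace of $\mathbb{C}^2$, so $G(0|1; \mathbb{C}^{0|2}) \cong G(1; \mathbb{C}^2) = \proj 1$, which I label $\proj 1_1$. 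Feeding both identifications into Theorem \ref{redg}(1) yields the claim.

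As a self-contained check that bypasses the general theorem, I would unwind the big-cell construction of Section \ref{superG} in this case. Here $c_0|c_1 = 1|1$, so there are ${2 \choose 1} \cdot {2 \choose 1} = 4$ super big cells, each a copy of $\mathbb{C}^{2|2}$; writing a representative matrix
\begin{equation*}
\mathcal{Z}_I = \left( \begin{array}{cc||cc} 1 & x & 0 & \xi \\ 0 & \eta & 1 & y \end{array}\right),
\end{equation*}
one reads off that the even coordinates are $x$ (top-left block) and $y$ (bottom-right block), while $\xi,\eta$ are odd. Computing a transition that moves only the even index via $\mathcal{Z}_J = \mathcal{B}_{IJ}^{-1}\mathcal{Z}_I$ gives $\tilde x = 1/x$ and $\tilde y = y - \eta\xi/x$, and symmetrically a move of the odd index rescales $y$ while leaving $x$ fixed up to an odd correction. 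Passing to $G_{red}$, i.e.\ setting $\xi = \eta = 0$, the off-diagonal odd corrections drop out and the even transition functions decouple: the four charts glue $x$ exactly as the standard atlas of one $\proj 1$ and $y$ as that of a second $\proj 1$, giving $G_{red} \cong \proj 1_0 \times \proj 1_1$.

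I expect the only real obstacle to be the identification of the purely odd factor $G(0|1; \mathbb{C}^{0|2})$ with an honest projective line; once it is clear that killing the odd coordinates decouples the even gluing data into two independent $\proj 1$-atlases (equivalently, once Theorem \ref{redg}(1) is invoked), the remainder is bookkeeping.
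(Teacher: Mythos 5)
Your proposal is correct and follows essentially the same route as the paper: both invoke Theorem \ref{redg}(1) and then identify $G_0 = G(1|0;\mathbb{C}^{2|0})$ and $G_1 = G(0|1;\mathbb{C}^{0|2})$ as two copies of $\proj 1$. Your supplementary big-cell verification is also the alternative argument the paper itself mentions immediately after its proof (setting the nilpotents to zero in the charts), so nothing essential differs.
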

\begin{proof} Keeping the same notation as above, one gets $G_0 = G(1|0; \mathbb{C}^{2|0})$ and $G_1 = G(0|1; \mathbb{C}^{0|2})$. Therefore, topologically, one has $G_0 \cong \proj 1_0$ and $G_1 \cong \proj 1_1$, where the subscripts refer to the two copies of projective lines. The conclusion follows by the first point of previous theorem. \end{proof}
\noindent It is fair to observe that we would have gotten to the same conclusion by looking at the big cells of this super Grassmannian, after having set the nilpotents to zero. \\ 
We thus have the following situation 
\bear
\xymatrix{
& \ar[dl]_{\pi_0} \proj 1_0 \times \proj 1_1 \ar[dr]^{\pi_1}\\
\proj 1_0  & & \proj 1_1}  
\eear
that helps us to recover the geometric data of $G_{red}$ and $G$ out of those of the two copies of projective lines. \\
Along this line, we recall that $\mathcal{O}_{\proj 1 \times \proj 1} (\ell_1 , \ell_2)$ is the \emph{external tensor product} 
$\mathcal{O}_{\proj 1_0} (\ell_1) \boxtimes \mathcal{O}_{\proj 1_1} (\ell_2) \defeq \pi^\ast_0 \mathcal{O}_{\proj 1_0}(\ell_1) \otimes_{\mathcal{O}_{\proj 1_0 \times \proj 1_1}} \pi^\ast_1 \mathcal{O}_{\proj 1_1} (\ell_2)$, and since the 
tautological sheaf on $\proj 1$ is $\mathcal{O}_{\proj 1} (-1),$ we have that 
\begin{align}
& \mathcal{S}_0 = \mathcal{O}_{\proj 1_0} (-1) \boxtimes \mathcal{O}_{\proj 1_1} = \mathcal{O}_{\proj 1_0 \times \proj 1_1} (-1,0), \\
& \mathcal{S}_1 = \Pi \mathcal{O}_{\proj 1_0} \boxtimes \mathcal{O}_{\proj 1_1} (-1) = \Pi \mathcal{O}_{\proj 1_0 \times \proj 1_1} (0,-1).
\end{align}
Similarly, observing that the sheaf dual to the tautological sheaf on $\proj 1$ is given again by the sheaf $\mathcal{O}_{\proj 1}(+1)$, as the (twisted) Euler sequence reads 
\bear
\xymatrix@R=1.5pt{
0 \ar[r] & \mathcal{O}_{\proj 1} (-1) \ar[r] & \mathcal{O}_{\proj 1}^{\oplus 2}  \ar[r] & \mathcal{T}_{\proj 1} (-1) \ar[r] &  0 
},  
\eear
and therefore $\widetilde{\mathcal{S}}_{\proj 1} \cong (\mathcal{T}_{\proj 1} (-1))^\ast \cong \Omega^1_{\proj 1} (+1) \cong \mathcal{O}_{\proj 1} (-1),$ one has the following:
\begin{align}
& \widetilde{\mathcal{S}}_0 = \mathcal{O}_{\proj 1_0} (-1) \boxtimes \mathcal{O}_{\proj 1_1} = \mathcal{O}_{\proj 1_0 \times \proj 1_1} (-1,0) , \\
& \widetilde{\mathcal{S}}_1 = \Pi \mathcal{O}_{\proj 1_0} \boxtimes \mathcal{O}_{\proj 1_1} (-1) = \Pi \mathcal{O}_{\proj 1_0 \times \proj 1_1} (0,-1).
\end{align}
This is enough to identify the fermionic sheaf of $G$, since $\mathcal{F}_G = \mbox{Gr}^{(1)} \,\mathcal{O}_G$ and therefore by virtue of the second point of the previous Theorem \ref{redg}, one has 
$\mathcal{F}_G \cong \mathcal{S}_0 \otimes \widetilde{\mathcal{S}}_1 \oplus \widetilde{\mathcal{S}}_0 \otimes \mathcal{S}_1 $, so
\bear
\mathcal{F}_G \cong \Pi \left ( \mathcal{O}_{\proj 1_0 \times \proj 1_1} (-1,-1) \oplus \mathcal{O}_{\proj 1_0 \times \proj 1_1} (-1,-1) \right ),
\eear
Which, in turns, shows that 
\bear
Sym^2 \mathcal{F}_G = \mathcal{O}_{\proj 1_0 \times \proj 1_1} (-2,-2).
\eear
and one can prove the following.
\begin{theorem}[$G(1|1; \mathbb{C}^{2|2})$ is Non-Projected] The supermanifold $G = G(1|1; \mathbb{C}^{2|2})$ is in general non-projected. In particular,
$H^1 (\mathcal{T}_{\proj 1_0 \times \proj 1_1} \otimes Sym^2 \mathcal{F}_G) \cong \mathbb{C}\oplus \mathbb{C}.$
\end{theorem}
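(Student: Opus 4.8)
The plan is to reduce the assertion to a Künneth computation on $\proj 1_0 \times \proj 1_1$, feeding in the sheaf data already assembled above. The preceding Lemma gives $G_{red} \cong \proj 1_0 \times \proj 1_1$, and the computation just above the statement gives $Sym^2 \mathcal{F}_G \cong \mathcal{O}_{\proj 1_0 \times \proj 1_1}(-2,-2)$. The only missing ingredient is the tangent sheaf of the reduced space: since $\proj 1_0 \times \proj 1_1$ is a product, its tangent sheaf splits as the direct sum of the pullbacks of the tangent sheaves of the two factors, and since $\mathcal{T}_{\proj 1} \cong \mathcal{O}_{\proj 1}(2)$ I obtain
\[
\mathcal{T}_{\proj 1_0 \times \proj 1_1} \cong \mathcal{O}_{\proj 1_0 \times \proj 1_1}(2,0) \oplus \mathcal{O}_{\proj 1_0 \times \proj 1_1}(0,2).
\]

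First I would tensor this with $Sym^2 \mathcal{F}_G$ to get
\[
\mathcal{T}_{\proj 1_0 \times \proj 1_1} \otimes Sym^2 \mathcal{F}_G \cong \mathcal{O}_{\proj 1_0 \times \proj 1_1}(0,-2) \oplus \mathcal{O}_{\proj 1_0 \times \proj 1_1}(-2,0).
\]
Then I would apply the Künneth formula to each summand together with the standard one-variable cohomology $H^0(\proj 1, \mathcal{O}_{\proj 1}) \cong \mathbb{C}$ and $H^1(\proj 1, \mathcal{O}_{\proj 1}(-2)) \cong \mathbb{C}$ (Serre duality), the remaining relevant groups being zero. For $\mathcal{O}(0,-2)$ the only surviving degree-one term is $H^0(\mathcal{O}_{\proj 1}) \otimes H^1(\mathcal{O}_{\proj 1}(-2)) \cong \mathbb{C}$, and by symmetry $\mathcal{O}(-2,0)$ contributes another $\mathbb{C}$. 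Adding the two gives $H^1(\mathcal{T}_{\proj 1_0 \times \proj 1_1} \otimes Sym^2 \mathcal{F}_G) \cong \mathbb{C} \oplus \mathbb{C}$, which is the stated isomorphism.

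For the non-projectedness I would invoke Theorem \ref{N=2}: an $\mathcal{N}=2$ supermanifold is non-projected precisely when its class $\omega \in H^1(\mathcal{T}_{G_{red}} \otimes Sym^2 \mathcal{F}_G)$ is non-zero. The dimension count guarantees this group is non-trivial, so there is room for a non-projected structure; to see that $G$ itself is non-projected one extracts $\omega_G$ from the transition functions of $G$ between two super big cells (as computed in the patching construction of Section \ref{superG}) and checks that an even coordinate acquires a genuinely quadratic-in-odd correction.

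The main obstacle is not the cohomology computation, which is routine once the tangent sheaf is identified, but establishing that the obstruction class $\omega_G$ is genuinely non-zero, i.e.\ that the structure sheaf of $G$ does not split as an $\mathcal{O}_{G_{red}}$-module. This requires the explicit big-cell transition functions of $G$ and a verification that the associated \v{C}ech $1$-cocycle valued in $\mathcal{T}_{G_{red}} \otimes Sym^2 \mathcal{F}_G$ is not a coboundary; the dimension count above merely guarantees that such a non-trivial class can exist.
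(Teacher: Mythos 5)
Your proposal follows the paper's proof essentially verbatim: the paper's argument for this theorem is exactly the K\"unneth computation you describe, splitting $\mathcal{T}_{\proj 1_0 \times \proj 1_1}$ as $\mathcal{O}(2,0)\oplus\mathcal{O}(0,2)$, tensoring with $\mathcal{O}(-2,-2)$, and concluding $H^1 \cong \mathbb{C}\oplus\mathbb{C}$, which is where its formal proof ends. The remaining step you correctly flag as the real content --- extracting $\omega_G$ from the big-cell transition functions and checking the resulting \v{C}ech $1$-cocycle is not a coboundary --- is precisely what the paper carries out in the discussion immediately following the proof (computing the transition rules among the four cells $\mathcal{Z}_{\mathcal{U}_i}$ and reading off $\omega_G=(1,1)\neq 0$), so your outline is the right one; you have simply not executed that explicit cocycle computation.
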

\begin{proof} In order to compute the cohomology group $H^1(\mathcal{T}_{\proj 1_0 \times \proj 1_1} \otimes Sym^2 \mathcal{F}_G)$, we observe that in general, on the product of two varieties, we have 
$\mathcal{T}_{X \times Y} \cong p_1^\ast \mathcal{T}_X \oplus p_2^\ast \mathcal{T}_{Y}$, where the $p_i$ are the projections on the factors, so that, in particular, we find 
\bear
\mathcal{T}_{\proj 1_0 \times \proj 1_1 } \cong \pi_0^\ast \mathcal{T}_{\proj 1_0} \oplus \pi_1^\ast \mathcal{T}_{\proj 1_1} \cong \pi^\ast_0 \mathcal{O}_{\proj 1_0} (2) \oplus \pi^\ast_1 \mathcal{O}_{\proj 1_1} (2) = 
\mathcal{O}_{\proj 1_0 \times \proj 1_1} (2,0) \oplus \mathcal{O}_{\proj 1_0 \times \proj 1_1} (0,2).  \nonumber
\eear 
Taking the tensor product with $Sym^2 \mathcal{F}_G$, one has
\begin{align}
\mathcal{T}_{\proj 1_0 \times \proj 1_1 } \otimes Sym^2 \mathcal{F}_G & \cong \left ( \mathcal{O}_{\proj 1_0 \times \proj 1_1} (2,0) \oplus \mathcal{O}_{\proj 1_0 \times \proj 1_1} (0,2) \right ) \otimes \mathcal{O}_{\proj 1_0 \times \proj 1_1} (-2,-2) \nonumber \\
 & \cong \mathcal{O}_{\proj 1_0 \times \proj 1_1} (0,-2) \oplus \mathcal{O}_{\proj 1_0 \times \proj 1_1} (-2,0). 
\end{align}
Now, via the K\"unneth formula one has 
\bear
H^n \left ( X \times Y , p_1^\ast \mathcal{F}_{X} \otimes_{\mathcal{O}_{X \times Y}} p^\ast_2 \mathcal{G}_Y \right ) \cong \bigoplus_{i+j = n} H^i \left ( X, \mathcal{F}_X \right ) \otimes H^j (Y, \mathcal{F}_Y), 
\eear 
so that 
\begin{align}
H^1 (\mathcal{T}_{\proj 1_0 \times \proj 1_1 } \otimes Sym^2 \mathcal{F}_G )& \cong H^1 ( \mathcal{O}_{\proj 1_0 \times \proj 1_1} (0,-2) \oplus \mathcal{O}_{\proj 1_0 \times \proj 1_1} (-2,0)) \nonumber \\
& \cong H^1 ( \mathcal{O}_{\proj 1_0 \times \proj 1_1} (0,-2)) \oplus H^1 (\mathcal{O}_{\proj 1_0 \times \proj 1_1} (-2,0))  \nonumber \\
& \cong H^0 ( \mathcal{O}_{\proj 1_0 }) \otimes H^1 (\mathcal{O}_{\proj 1_1}) (-2) \oplus H^1 ( \mathcal{O}_{\proj 1_0 }) (-2) \otimes H^0 (\mathcal{O}_{\proj 1_1}) \nonumber \\
& \cong \mathbb{C} \oplus \mathbb{C},
\end{align}
which concludes the proof.
\end{proof}
\noindent There are different ways to find the representatives in the obstruction cohomology group for $G$. We will first use the super big cells of $G(1|1; \mathbb{C}^{2|2})$ to identifies these representatives and to establish that in the isomorphisms $H^1(\mathcal{T}_{\proj 1_0 \times \proj 1_1 } \otimes Sym^2 \mathcal{F}_G) \cong \mathbb{C} \oplus \mathbb{C}$ the cohomology class corresponds to the choice $\omega_G = (1 , 1)$. This is an explicit and immediate way to do this. \\
First, we observe that, since the reduced manifold underlying $G(1|1; \mathbb{C}^{2|2})$ has the topology of $\proj 1_0 \times \proj 1_1$, it is covered by four open sets. If we call 
$\mathcal{U}^{(0)} = \{ \mathcal{U}^{(0)}_\ell \}_{\ell = 0,1}$ the usual open sets covering $\proj 1_0 $ and $\mathcal{U}^{(1)} = \{ \mathcal{U}^{(1)}_\ell \}_{\ell = 0,1}$ the open sets covering $\proj 1_1$, we then have a system of open sets 
covering their product $\proj 1_0 \times \proj 1_1$ given by 
\begin{align}
& \mathcal{U}_1 \defeq \mathcal{U}^{(0)}_0 \times \mathcal{U}^{(1)}_0 = \left \{ ([X_0: X_1], [Y_0 : Y_1] ) \in \proj 1_0 \times \proj 1_1 : X_0 \neq 0 , \; Y_0 \neq 0 \right \},  \cr
& \mathcal{U}_2 \defeq \mathcal{U}^{(0)}_1 \times \mathcal{U}^{(1)}_0 = \left \{ ([X_0: X_1], [Y_0 : Y_1] ) \in \proj 1_0 \times \proj 1_1: X_1 \neq 0 , \; Y_0 \neq 0 \right \}, \cr
& \mathcal{U}_3 \defeq \mathcal{U}^{(0)}_0 \times \mathcal{U}^{(1)}_1 = \left \{ ([X_0: X_1], [Y_0 : Y_1] ) \in \proj 1_0 \times \proj 1_1 : X_0 \neq 0 , \; Y_1 \neq 0 \right \}, \cr
& \mathcal{U}_3 \defeq \mathcal{U}^{(0)}_1 \times \mathcal{U}^{(1)}_1 = \left \{ ([X_0: X_1], [Y_0 : Y_1] ) \in \proj 1_0 \times \proj 1_1 : X_1 \neq 0 , \; Y_1 \neq 0 \right \}. 
\end{align}
These correspond to the following matrices $\mathcal{Z}_{\mathcal{U}_i}$, out of which we can read the coordinates on the big cells:
\begin{align}
\mathcal{Z}_{\mathcal{U}_1} \defeq \left ( \begin{array}{ccc||ccc} 
1 & & x_1 & 0 & &  \xi_1 \\
\hline \hline
0 & & \eta_1  & 1 & & y_1
\end{array}
\right ), \qquad 
\mathcal{Z}_{\mathcal{U}_2} \defeq \left ( \begin{array}{ccc||ccc} 
x_2 & & 1 & 0 & &  \xi_2 \\
\hline \hline
\eta_2 & & 0  & 1 & & y_2
\end{array}
\right ), \\ \nonumber \\
\mathcal{Z}_{\mathcal{U}_3} \defeq \left ( \begin{array}{ccc||ccc} 
1 & & x_3 & \xi_3 & &  0 \\
\hline \hline
0 & & \eta_3  & y_3 & & 1
\end{array}
\right ), \qquad 
\mathcal{Z}_{\mathcal{U}_4} \defeq \left ( \begin{array}{ccc||ccc} 
x_4 & & 1 & \xi_4 & &  0 \\
\hline \hline
\eta_4 & & 0  & y_4 & & 1
\end{array}
\right ).
\end{align}
Following the procedure illustrated above or by rows and columns operations on the $\mathcal{Z}_{\mathcal{U}_i}$ one find the transition rules between the various charts, 
\begin{align}
& \mathcal{U}_1 \cap \mathcal{U}_2 \; \rightsquigarrow \; \left \{ \begin{array}{l}
x_1 = x_2^{-1} \\
\xi_1 = \xi_2 x_2^{-1}  \\
\eta_1 = - \eta_2 x_2^{-1}  \\
y_1 = y_2 + \xi_2 \eta_2 x_2^{-1}
\end{array}
\right.
\qquad \mathcal{U}_1 \cap \mathcal{U}_3 \; \rightsquigarrow \; \left \{ \begin{array}{l}
x_1 = x_3 - \xi_3 \eta_3 y^{-1}_3 \\
\xi_1 = - \xi_3 y_3^{-1}  \\
\eta_1 = \eta_3 y_3^{-1}  \\
y_1 = y_3^{-1}
\end{array}
\right. \nonumber \\ \nonumber \\
& \mathcal{U}_1 \cap \mathcal{U}_4 \; \rightsquigarrow \; \left \{ \begin{array}{l}
x_1 = x_4^{-1} + \xi_4 \eta_4 x_4^{-2} y_4^{-1}\\
\xi_1 = - \xi_4 x_4^{-1} y_4^{-1} \\
\eta_1 = - \eta_4 x_4^{-1} y_4^{-1} \\
y_1 = y_4^{-1} - \xi_4 \eta_4 x_4^{-1} y^{-2}_4
\end{array}
\right. 
\qquad \mathcal{U}_2 \cap \mathcal{U}_3 \; \rightsquigarrow \; \left \{ \begin{array}{l}
x_2 = x_3^{-1} + \xi_3 \eta_3 x_3^{-2} y_3^{-1}\\
\xi_2 = - \xi_3 x_3^{-1} y_3^{-1} \\
\eta_2 = - \eta_3 x_3^{-1} y_3^{-1} \\
y_2 = y_3^{-1} - \xi_3 \eta_3 x_3^{-1} y^{-2}_3
\end{array}
\right. \nonumber \\ \nonumber \\
& \mathcal{U}_2 \cap \mathcal{U}_4 \; \rightsquigarrow \; \left \{ \begin{array}{l}
x_2 = x_4 - \xi_4 \eta_4 y^{-1}_4 \\
\xi_2 = - \xi_4 y_4^{-1} \\
\eta_2 = \eta_4 y_4^{-1} \\
y_2 = y_4^{-1}
\end{array}
\right. \qquad
\mathcal{U}_3 \cap \mathcal{U}_4 \; \rightsquigarrow \; \left \{ \begin{array}{l}
x_3 = x_4^{-1} \\
\xi_3 = \xi_4 x_4^{-1} \\
\eta_3 = - \eta_4 x_4^{-1} \\
y_3 = y_4 + \xi_4 \eta_4 x_4^{-1}
\end{array}
\right.
\end{align}
By looking at these transformation rules, we therefore have that in the isomorphism above the class is represented by $(1,1) \in \mathbb{C} \oplus \mathbb{C}$ and the cocycles representing $\omega$ are given by 
$\omega = (\omega_{12}, \omega_{13}, \omega_{14}, \omega_{23}, \omega_{24}, \omega_{34} ),$ where the $\omega_{ij}$ are (in tensor notation)
\begin{align}
&\omega_{12} = \frac{\xi_2 \eta_2}{x_2} \otimes \partial_{y_1} , \qquad  \qquad \qquad \qquad \quad \omega_{13} =  - \frac{\xi_3 \eta_3}{y_3} \otimes \partial_{x_1},  \nonumber \\
& \omega_{14} = + \frac{\xi_4 \eta_4}{x_4^2 y_4} \otimes \partial_{x_1}  - \frac{\xi_4 \eta_4}{x_4 y^2_4} \otimes \partial_{y_1}, \qquad \omega_{23} = + \frac{\xi_3 \eta_3}{x_3^2 y_3} \otimes \partial_{x_2} - \frac{\xi_3 \eta_3}{x_3 y^2_3} \otimes \partial_{y_2}, 
\nonumber \\
&\omega_{24} =  - \frac{\xi_4 \eta_4}{y_4} \otimes \partial_{x_2}, \qquad \qquad \qquad \qquad \; \omega_{34} = + \frac{\xi_1 \eta_4}{x_4} \otimes \partial_{y_3}.
\end{align}
One can get to the same result also by means of a different computation, as remarked above. Observing that $H^1 (\mathcal{O}_{\proj 1_0 \times \proj 1_1} (-2,0)) \oplus H^1 (\mathcal{O}_{\proj 1_0 \times \proj 1_1} (0,-2))$ is generated by the two elements 
\bear
H^1 (\mathcal{O}_{\proj 1_0 \times \proj 1_1} (-2,0)) \oplus H^1 (\mathcal{O}_{\proj 1_0 \times \proj 1_1} (0,-2)) \cong \left \langle \frac{1}{X_0 X_1} \boxtimes 1 ,\; 1 \boxtimes \frac{1}{Y_0 Y_1} \right \rangle_{\mathcal{O}_{\proj 1_0 \otimes \proj 1_1}}, 
\eear
we can then look at these generators in the intersections, keeping in mind that $\mathcal{F}_G \cong \Pi \mathcal{O}_{\proj 1_0 \times \proj 1_1} (-1,-1) 
\oplus \Pi \mathcal{O}_{\proj 1_0 \times \proj 1_1} (-1,-1),$ in order to identify the cocycles that enter in the transition functions. We examine the various intersections.
\begin{itemize}
\item[ $\mathcal{U}_1 \cap \mathcal{U}_2:$] The following identifications can be made
\begin{align}
& \xi_1 = \Pi \left (\frac{1}{X_0} \boxtimes \frac{1}{Y_0} , 0 \right ), \qquad \eta_1 = \Pi \left (0, \frac{1}{X_0} \boxtimes \frac{1}{Y_0}  \right ),  \cr
& \xi_2 = \Pi \left (\frac{1}{X_1} \boxtimes \frac{1}{Y_0} , 0 \right ), \qquad \eta_2 = \Pi \left (0, \frac{1}{X_1} \boxtimes \frac{1}{Y_0}  \right ).  
\end{align}
These gives the transition functions above between $\xi_1 $ and $\xi_2$ and between $\eta_1 $ and $\eta_2$. Notice that in the intersection $\mathcal{U}_1 \cap \mathcal{U}_2$ only the bit $H^1 (\mathcal{O}_{\proj 1_0 \times \proj 1_1} (-2,0))$ contributes and we have therefore
\begin{align}
\omega_{12} & = \pm \ell_1\left ( \frac{1}{X_0X_1} \boxtimes 1 \right ) = \pm \ell_1 \left ( \frac{1}{X_0 X_1} \boxtimes \frac{Y_0^2}{Y_0^2} \right ) = \pm \ell_1 \left ( \frac{1}{X_0 X_1}\boxtimes \frac{1}{Y_0^2} \right ) \otimes \partial_{y_1}  \nonumber \\
& = \pm \ell_1 \left ( \frac{X_1}{X_0} \right ) \left ( \Pi \left (\frac{1}{X_1} \boxtimes \frac{1}{Y_0} , 0 \right ) \odot \Pi \left (0, \frac{1}{X_1} \boxtimes \frac{1}{Y_0}\right ) \right ) \otimes \partial_{y_1} \nonumber  \\
& = \pm \ell_1 \frac{\xi_2 \eta_2 }{x_2} \otimes \partial_{y_1}
\end{align}
where we have denoted by $\odot$ the supersymmetric product of the two (local) sections on $\mathcal{F}_G,$ as represented above.
\item[$\mathcal{U}_1 \cap \mathcal{U}_3:$] here we have a contribution from $H^1  (\mathcal{O}_{\proj 1_0 \times \proj 1_1} (0,-2))$ only and, therefore, we have to deal with $\omega_{13} = \ell_2 \left (1 \boxtimes 1 / Y_0 Y_1 \right )$. By a completely 
analogous treatment as above, one finds that
\bear
\omega_{13} = \pm \ell_2 \left ( 1 \boxtimes \frac{1}{Y_0Y_1} \right ) = \pm \ell_2 \frac{\xi_3 \eta_3}{y_3} \otimes \partial_{x_1}.   
\eear
\item[$\mathcal{U}_1 \cap \mathcal{U}_4:$] In this case we have both the contributions, so
\begin{align}
\omega_{14} = \pm \ell_1 \left ( \frac{1}{X_0X_1} \boxtimes 1\right ) \pm \ell_2 \left ( 1 \boxtimes \frac{1}{Y_0 Y_1 }\right ), 
\end{align}
so that by analogous manipulations as the one above one finds
\bear
\omega_{14} = \pm \ell_1 \frac{\xi_4 \eta_4}{x_4 y_4^2} \otimes \partial_{y_1} \pm \ell_2 \frac{\xi_4 \eta_4}{x_4^2 y_4} \otimes \partial_{x_1}. 
\eear
\end{itemize}
All the other $\omega_{ij}$ are identified in the same way and enter one of these three categories. \\
To conclude, one then impose the cocycle conditions as to fix the various signs of the $\ell_1 $ and $\ell_2 $ above, that agree with the one we found above by looking at the coordinates of the big cells: choosing $(\ell_1 = 1 , \ell_2 = 1)$ - this can always be done up to a change of coordinates -, one gets the same even 
transition functions as above. \\
This is enough to use the theorem classifying the complex supermanifold of dimension $n|2$ (see \cite{Manin} or \cite{CNR}) as to conclude that $G(1|1; \mathbb{C}^{2|2})$ can be defined up to isomorphism as follows
\begin{definition}[$G(1|1; \mathbb{C}^{2|2})$ as a Non-Projected Supermanifold] The super Grassmnannian $G(1|1;\mathbb{C}^{2|2})$ can be defined up to isomorphism as the $2|2$ dimensional supermanifold characterised by the triple 
$(\proj 1_0 \times \proj 1_1, \mathcal{F}_G, \omega_G)$ where $\mathcal{F}_G = \Pi  \mathcal{O}_{\proj 1_0 \times \proj 1_1} (-1,-1) \oplus \Pi \mathcal{O}_{\proj 1_0 \times \proj 1_1} (-1,-1) $ and where $\omega_{G} = (\ell_1, \ell_2)$, with $\ell_1\neq 0 $ and $\ell_2 \neq 0$, in the isomorphism 
$\omega_G \in H^1(\mathcal{T}_{\proj 1_0 \times \proj 1_1} \otimes Sym^2 \mathcal{F}_G) \cong \mathbb{C} \oplus \mathbb{C}.$
\end{definition}
\noindent On a very general ground, apart from projective superspaces, super Grassmannians are in general non-projected: the case of $G(1|1; \mathbb{C}^{2|2})$ we treated is just
the first non-trivial example of non-projected super Grassmannian. \\
Now, jump to the second issue we are interested into: we show that $G(1|1; \mathbb{C}^{2|2})$ is \emph{not} a non-projective supermanifold.
\begin{theorem}[$G(1|1; \mathbb{C}^{2|2})$ is Non-Projective] Let $G(1|1; \mathbb{C}^{2|2})$ be super Grassmannian defined as above. Then $G(1|1; \mathbb{C}^{2|2})$ is non-projective.
\end{theorem}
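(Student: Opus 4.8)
The plan is to rule out projectivity by showing that \emph{no} invertible sheaf on $G = G(1|1;\mathbb{C}^{2|2})$ can restrict to an ample sheaf on the reduced manifold $G_{red} \cong \proj 1_0 \times \proj 1_1$. First I would observe that any closed embedding $\iota : G \hookrightarrow \proj {n|m}$ reduces to a closed (hence projective) embedding $\iota_{red} : \proj 1_0 \times \proj 1_1 \hookrightarrow \proj n$, and that $\iota^\ast \mathcal{O}_{\proj {n|m}}(1)$ is then an invertible sheaf on $G$ whose restriction to $\proj 1_0 \times \proj 1_1$ is the very ample sheaf $\iota_{red}^\ast \mathcal{O}_{\proj n}(1) \cong \mathcal{O}(a,b)$ with $a,b \geq 1$. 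Thus it suffices to prove that no $\mathcal{O}(a,b)$ with $a,b>0$ lifts to an invertible sheaf on $G$; the resulting non-existence of a very ample invertible sheaf forbids any embedding into a projective superspace.

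To detect the obstruction to such a lift I would use the even part of the structure sheaf. Since $G$ is $\mathcal{N}=2$, the sheaf $\mathcal{O}_{G,0}$ is a square-zero extension of $\mathcal{O}_{\proj 1_0 \times \proj 1_1}$ by the ideal $Sym^2 \mathcal{F}_G \cong \mathcal{O}(-2,-2)$, and passing to units via $a \mapsto 1+a$ (legitimate as the ideal squares to zero) yields the short exact sequence of abelian sheaves
\[
0 \longrightarrow Sym^2 \mathcal{F}_G \longrightarrow \mathcal{O}_{G,0}^\ast \longrightarrow \mathcal{O}_{\proj 1_0 \times \proj 1_1}^\ast \longrightarrow 0.
\]
Its cohomology sequence produces a connecting map $\delta : \mathrm{Pic}(\proj 1_0 \times \proj 1_1) \to H^2(Sym^2 \mathcal{F}_G)$ whose kernel is exactly the set of invertible sheaves that lift to $G$. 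Note that $H^2(Sym^2\mathcal{F}_G) = H^2(\mathcal{O}(-2,-2)) \cong \mathbb{C}$ is precisely the single obstruction group $H^2(Sym^{2k}\mathcal{F}_G)$ (with $k=1$) predicted by the general theory recalled in the Introduction.

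The crux of the argument --- and the step I expect to be the main obstacle --- is the identification of $\delta$ with cup product against the non-projectedness class $\omega_G$: namely $\delta(L) = \langle c_1(L), \omega_G\rangle$, the Atiyah class $c_1(L) \in H^1(\Omega^1_{\proj 1_0\times\proj 1_1})$ cupped with $\omega_G \in H^1(\mathcal{T}_{\proj 1_0\times\proj 1_1}\otimes Sym^2\mathcal{F}_G)$ under the contraction $\Omega^1 \otimes \mathcal{T} \to \mathcal{O}$, landing in $H^2(Sym^2\mathcal{F}_G)$. This is standard square-zero deformation theory, but making it precise and matching signs is delicate; alternatively one can bypass it by a direct \v{C}ech computation with the explicit transition functions of $G$ recorded above, lifting a cocycle for $\mathcal{O}(a,b)$ and reading off the resulting $2$-cocycle valued in $\mathcal{O}(-2,-2)$.

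Finally I would evaluate the pairing by K\"unneth. Writing $c_1(\mathcal{O}(a,b)) = a\,c_1(\mathcal{O}(1,0)) + b\,c_1(\mathcal{O}(0,1))$ and $\omega_G = \ell_1\omega_0 + \ell_2\omega_1$, with $\omega_0,\omega_1$ the generators coming from $H^1(\mathcal{O}(0,-2))$ and $H^1(\mathcal{O}(-2,0))$, the contraction pairs the first-factor part of $c_1$ only with $\omega_0$ and the second-factor part only with $\omega_1$ (the cross terms vanish since $\langle dx,\partial_y\rangle = \langle dy,\partial_x\rangle = 0$), giving
\[
\delta(\mathcal{O}(a,b)) = (a\ell_1 + b\ell_2)\cdot g,
\]
where $g$ generates $H^2(\mathcal{O}(-2,-2)) \cong H^1(\mathcal{O}(-2))\otimes H^1(\mathcal{O}(-2)) \cong \mathbb{C}$. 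Since $\omega_G = (1,1)$, i.e. $\ell_1 = \ell_2 = 1$, this equals $(a+b)\,g \neq 0$ whenever $a,b>0$. Hence no ample --- a fortiori no very ample --- invertible sheaf on $\proj 1_0\times\proj 1_1$ lifts to $G$, contradicting the existence of an embedding $G\hookrightarrow\proj{n|m}$; therefore $G(1|1;\mathbb{C}^{2|2})$ is non-projective.
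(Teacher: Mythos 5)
Your proposal is correct and follows essentially the same route as the paper: the same unit-group extension $0 \to Sym^2\mathcal{F}_G \to \mathcal{O}_{G,0}^\ast \to \mathcal{O}^\ast_{\proj 1_0 \times \proj 1_1} \to 0$, the same connecting map $\delta : \mathrm{Pic}(\proj 1_0 \times \proj 1_1) \to H^2(\mathcal{O}(-2,-2)) \cong \mathbb{C}$ computed to be $(a,b)\mapsto a+b$, and the same conclusion that only the cohomology-free sheaves $\mathcal{O}(a,-a)$ lift to $G$. The paper carries out exactly the direct \v{C}ech cocycle computation you mention as the alternative to the cup-product formulation, lifting the transition functions of $\mathcal{O}(1,0)$ and reading off the $2$-cocycle $\frac{1}{X_0X_1}\boxtimes\frac{1}{Y_0Y_1}$.
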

\begin{proof} In order to prove the non-projectivity of $G \defeq G(1|1; \mathbb{C}^{2|2})$ we consider the following short exact sequence that comes from the structural exact sequence of $G$:
\bear \label{starshort}
\xymatrix@R=1.5pt{
0 \ar[r] & \mathcal{O}_{\proj 1_0 \times \proj 1_1} (-2, -2) \ar[r] & \mathcal{O}_{G,0}^*  \ar[r] & \mathcal{O}_{\proj 1_0 \times \proj 1_1}^\ast \ar[r] &  0. 
}  
\eear
Ordinary results in algebraic geometry yield $H^0 (\mathcal{O}_{\proj 1_0 \times \proj 1_1} (-2, -2)) = 0 = H^1 (\mathcal{O}_{\proj 1_0 \times \proj 1_1} (-2, -2) )$, whereas 
$H^2 (\mathcal{O}_{\proj 1_0 \times \proj 1_1} (-2, -2)) \cong \mathbb{C}.$ Likewise, one has $H^0 ( \mathcal{O}_{\proj 1_0 \times \proj 1_1}^\ast ) \cong \mathbb{C}^\ast$ and 
$\mbox{Pic} (\proj 1_0 \times \proj 1_1 ) = H^1 ( \mathcal{O}_{\proj 1_0 \times \proj 1_1}^\ast ) \cong \mathbb{Z} \oplus \mathbb{Z},$ by means of the ordinary exponential exact sequence. This is enough to realize that the cohomology sequence induced by the sequence above splits in two exact sequences. The first one gives an isomorphism $H^0 (\mathcal{O}_{G, 0}) \cong \mathbb{C}^\ast$, while the second one instead reads 
\bear
\!\!\!\! \xymatrix@R=1.5pt{
0 \ar[r] & H^1 (\mathcal{O}^\ast_{G, 0}) \ar[r] & \mbox{Pic} (\proj 1_0 \times  \proj 1_1 ) \cong \mathbb{Z}\oplus \mathbb{Z} \ar[r] & H^2 (\mathcal{O}_{\proj 1_0 \times \proj 1_1} (-2, -2))\cong \mathbb{C} \ar[r] &  \cdots. 
}  
\eear
Thus in order to establish the fate of the cohomology group $H^1 (\mathcal{O}_{G,0}^\ast)$ one has to look at the boundary map $\delta : \mbox{Pic} (\proj 1_0 \times \proj 1_1) \rightarrow H^2 (\mathcal{O}_{\proj 1_0 \times \proj 1_1} (-2, -2))$. Let then us consider the following diagram of cochain complexes:
\bear
\xymatrix{
{C}^2 (\mathcal{O}_{\proj 1_0 \times \proj 1_1} (-2, -2) ) \; \ar@{>->}[r] & {C}^2 (\mathcal{O}^\ast_{G, 0}) &  \\ 
& {C}^1 (\mathcal{O}^\ast_{G, 0}) \ar@{|->}[u] \ar@{|->>}[r] & {C}^1 (\mathcal{O}^\ast_{\proj 1_0 \times \proj 1_1}), 
}  
\eear
obtained by combining \eqref{starshort} with the \v{C}ech cochain complexes of the sheaves that appear.\\
Since $\langle \mathcal{O}_{\proj 1_0 \times \proj 1_1} (1, 0) , \mathcal{O}_{\proj 1_0 \times \proj 1_1} (0,1) \rangle_{\mathcal{O}_{\proj 1_0 \times \proj 1_1}}  \cong \mbox{Pic} (\proj 1_0 \times \proj 1_1)$, given the usual cover 
of $\proj 1_0 \times \proj 1_1$ by the open sets $\mathcal{U}_{i} $ above, $\mathcal{O}^\ast_{\proj 1_0 \times \proj 1_1} (1,0)$ could be represented by (six) cocycles 
$g_{ij} \in Z^1 (\mathcal{U}_i \cap \mathcal{U}_j, \mathcal{O}^\ast_{\proj 1_0 \times \proj 1_1})$. Explicitly, these cocycles are the transition functions of the line bundle 
\begin{align}
\mathcal{O}^\ast_{\proj 1_0 \times \proj 1_1} (1,0) \longleftrightarrow \left \{ g_{12} = \frac{X_1}{X_0}, g_{13} = 1, g_{14} = \frac{X_1}{X_0}, g_{23} = \frac{X_0}{X_1}, g_{24}= 1, g_{34} = \frac{X_1}{X_0} \right \},  \nonumber
\end{align}
where, with an abuse of notation, we dimissed the second bit of the external tensor product, which is just the identity. Since the map $j : {C}^1 (\mathcal{O}^\ast_{G, 0}) \rightarrow {C}^1 (\mathcal{O}^\ast_{\proj 1_0 \times \proj 1_1}) $ 
is surjective, these cocycles are images of elements in ${C}^1 (\mathcal{O}^\ast_{G, 0})$. Notice that $j$ is induced by the inclusion of the reduced variety $\proj 1_0 \times \proj 1_1$ into $G$, so the cochains in 
${C}^1 (\mathcal{O}^\ast_{G, 0})$ are exactly the $\{g_{ij}\}_{i,j\in I}$ we have written above (notice also that these are no longer cocycles in $\mathcal{O}^\ast_{G, 0}$). Using the \v{C}ech coboundary map 
$\delta (j^\ast \mathcal{O}_{\proj 1_0 \times \proj 1_1} (1,0))$ over $G$, one finds, for example:  
\begin{align}
g_{12} \cdot g_{23} \cdot g_{31} \lfloor_{\mathcal{U}_1 \cap \mathcal{U}_2 \cap \mathcal{U}_3} = 1 \boxtimes 1 + \frac{1}{X_0X_1} \boxtimes \frac{1}{Y_0 Y_1}.   
\end{align}
Indeed, by looking at the affine coordinates in the big cells, these reads $x_2 x_3 = 1 + \frac{\xi_2 \eta_2}{x_2 y_2}$ and setting, as we have done above above
\bear
\xi_2 = \Pi \left ( \frac{1}{X_1} \boxtimes \frac{1}{Y_0}, 0 \right ), \qquad \eta_2 = \Pi \left (0,  \frac{1}{X_1} \boxtimes \frac{1}{Y_0} \right ),  
\eear  
and taking their supersymmetric product one has $\frac{\xi_2 \eta_2}{x_2 y_2} = \frac{1}{X_0X_1} \boxtimes \frac{1}{Y_0Y_1} $. Now, by exactness of the diagram, this element is in the kernel of the map 
$j : {C}^2 (\mathcal{O}^\ast_{G, 0}) \rightarrow {C}^2 (\mathcal{O}^\ast_{\proj 1_0 \times \proj 1_1})$, that equals the image of the map 
$i : {C}^2 (\mathcal{O}_{\proj 1_0 \times \proj 1_1} (-2, -2) ) \rightarrow {C}^2 (\mathcal{O}^\ast_{G, 0})$, therefore there exists an element $N \in {C}^2 (\mathcal{O}_{\proj 1_0 \times \proj 1_1} (-2, -2) )$ such that 
$i(N) = 1 \boxtimes 1 + \frac{1}{X_0X_1} \boxtimes \frac{1}{Y_0 Y_1}$ and it is a cocycle. Then, considering that the map $i$ is induced by the map 
$ \mathcal{O}_{\proj 1_0 \times \proj 1_1} (-2,-2)  \owns a\boxtimes b \mapsto 1 \boxtimes 1 + a\boxtimes b \in \mathcal{O}^\ast_{G,0}$, we have that the element $1\boxtimes + \frac{1}{X_0X_1} \boxtimes \frac{1}{Y_0 Y_1}$ is the image of 
$1 \frac{1}{X_0X_1} \boxtimes \frac{1}{Y_0 Y_1}$ via $i$. By symmetry, the same applies to the second generator of $\mbox{Pic} (\proj 1_0 \times \proj 1_1)$, which is given by $\mathcal{O}_{\proj 1_0 \times \proj 1_1} (0, 1)$, thus that the map 
$ \delta : \mbox{Pic} (\proj 1_0 \times \proj 1_1) \cong \mathbb{Z} \oplus \mathbb{Z} \rightarrow H^2 (\mathcal{O}_{\proj 1_0 \times \proj 1_1} (-2, -2))  \cong \mathbb{C}$ reads $ \mathbb{Z}\oplus \mathbb{Z} \owns (a, b) \longmapsto a + b \in \mathbb{C}.$ 
By exactness, it follows that the only invertible sheaves on $\proj 1_0 \times \proj 1_1$ that lift to the whole $G$ are those of the kind $\mathcal{O}_{\proj 1_0 \times \proj 1_1}(a, -a)$, as the composition of the maps yields $(a,-a) \mapsto (a,-a) \mapsto 
a - a = 0 $ as it should. Since these invertible sheaves have no cohomology, they cannot give any embedding in projective superspaces and this completes the proof. \end{proof}
\noindent Notice the subtlety: the above theorem says that $\mbox{Pic} (\proj 1_0 \times \proj 1_1) \neq 0$ (actually $\mbox{Pic} (\proj 1_0 \times \proj 1_1) \cong \mathbb{Z}$), but still there are no \emph{ample} invertible sheaves that allow for an embedding of $G (1|1; \mathbb{C}^{2|2}) $ into some projective superspaces.\\
The fundamental consequence is that non-projectivity is not confined to this particular super Grassmannian only.
\begin{theorem}[Super Grassmannians are Non-Projective] The super Grassmannian space $G(a|b; \mathbb{C}^{m|n})$ for $0 < a < n$ and $0 < b < m$ is non-projective.
\end{theorem}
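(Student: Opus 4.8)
The plan is to reduce the statement to the non-projectivity of $G(1|1;\mathbb{C}^{2|2})$ already established above, by realising this small super Grassmannian as a \emph{closed sub-supermanifold} of $G(a|b;\mathbb{C}^{m|n})$ and then invoking the fact that a closed sub-supermanifold of a projective supermanifold is itself projective. Concretely, if $\iota : \mathpzc{N} \longhookrightarrow \mani$ is a closed embedding and $\mani \longhookrightarrow \proj{p|q}$ is an embedding into some projective superspace, their composition embeds $\mathpzc{N}$ into $\proj{p|q}$; taking the contrapositive, if $\mathpzc{N}$ is non-projective then every supermanifold containing it as a closed sub-supermanifold is non-projective as well. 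So it suffices to exhibit one closed embedding $G(1|1;\mathbb{C}^{2|2}) \longhookrightarrow G(a|b;\mathbb{C}^{m|n})$.

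First I would build the embedding by a purely linear-algebraic construction. Writing $m$ and $n$ for the even and odd dimensions of the ambient space, the (interiority) hypotheses on $a,b$ give $1\le a\le m-1$ and $1\le b\le n-1$; in particular $m,n\ge 2$, so we may fix a decomposition $\mathbb{C}^{m|n}\cong \mathbb{C}^{2|2}\oplus\mathbb{C}^{(m-2)|(n-2)}$ together with a fixed linear subspace $F\subset \mathbb{C}^{(m-2)|(n-2)}$ of dimension $(a-1)|(b-1)$, which exists precisely because $a-1\le m-2$ and $b-1\le n-2$. The assignment $L\mapsto L\oplus F$, sending a $1|1$-dimensional subspace $L\subset\mathbb{C}^{2|2}$ to the $a|b$-dimensional subspace $L\oplus F\subset\mathbb{C}^{m|n}$, is functorial and hence defines a morphism $\Psi: G(1|1;\mathbb{C}^{2|2})\to G(a|b;\mathbb{C}^{m|n})$. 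On reduced spaces it is, by Theorem \ref{redg}, the product of the two classical linear sub-Grassmannian inclusions $G(1;\mathbb{C}^2)\hookrightarrow G(a;\mathbb{C}^m)$ and $G(0|1;\mathbb{C}^{0|2})\hookrightarrow G(0|b;\mathbb{C}^{0|n})$, each sending a subspace to its direct sum with the corresponding block of $F$; both are ordinary closed embeddings, so $\Psi$ is a closed embedding of the underlying reduced manifolds, with closed image.

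The key point — and the step I expect to be the main obstacle — is to verify that $\Psi$ is a closed embedding of \emph{supermanifolds}, that is that the sheaf morphism $\Psi^\sharp$ is surjective, which by the embedding criterion recalled earlier is equivalent to being an embedding. This is most transparent in the super big cells: choosing the charts $\mathcal{Z}_I$ adapted to the chosen decomposition, the even and odd coordinates $\{x,\xi,\eta,y\}$ of $G(1|1;\mathbb{C}^{2|2})$ appear among the coordinates of $G(a|b;\mathbb{C}^{m|n})$ as the entries of the $\mathbb{C}^{2|2}$-block, while every remaining coordinate is pulled back to a constant determined by $F$; hence $\Psi^\sharp$ is locally a surjection of structure sheaves, and globally a closed embedding. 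One should also check that the fermionic data restrict correctly, i.e. that $\Psi^\ast\mathcal{F}_{G(a|b;\mathbb{C}^{m|n})}$ surjects onto $\mathcal{F}_{G(1|1;\mathbb{C}^{2|2})}$, which again follows from the explicit form of the transition functions read off via Theorem \ref{redg}. With the closed embedding in hand, the hereditary argument of the first paragraph applies: were $G(a|b;\mathbb{C}^{m|n})$ projective, composing with $\Psi$ would embed $G(1|1;\mathbb{C}^{2|2})$ into a projective superspace, contradicting its non-projectivity established above. Therefore $G(a|b;\mathbb{C}^{m|n})$ is non-projective, as claimed.
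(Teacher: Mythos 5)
Your proposal is correct and follows the same strategy as the paper: both reduce the statement to the already-established non-projectivity of $G(1|1;\mathbb{C}^{2|2})$ by exhibiting it as a closed sub-supermanifold of the general super Grassmannian and using that non-projectivity is inherited along closed embeddings. The only difference is in how that embedding is constructed: you use the direct assignment $L\mapsto L\oplus F$ for a fixed $(a-1)|(b-1)$-dimensional subspace $F$ of a complement of $\mathbb{C}^{2|2}$, whereas the paper chains the inclusion $G(1|1;\mathbb{C}^{2|2})\hookrightarrow G(1|1;\mathbb{C}^{a+1|b+1})$ induced by $\mathbb{C}^{2|2}\subset\mathbb{C}^{a+1|b+1}$ with the duality isomorphism $G(1|1;\mathbb{C}^{a+1|b+1})\cong G(a|b;(\mathbb{C}^{a+1|b+1})^\ast)$ and a further linear inclusion --- the two embeddings serve the same purpose, and your big-cell verification of the closed-embedding property is sound.
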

\begin{proof} as in \cite{Manin}, it is enough to observe that the inclusion $\mathbb{C}^{2|2} \subset \mathbb{C}^{a+1 | b+1}$ induces in turn the inclusion $G(1|1; \mathbb{C}^{2|2}) \hookrightarrow G (1|1; \mathbb{C}^{a+1 | b+1})$. This last super 
Grassmannian is isomorphic, as for the usual Grassmannians, to $G(a|b; (\mathbb{C}^{a+1|b+1})^\ast)$, that in turn embeds into $G(a|b; \mathbb{C}^{n|m})$. This leads to $G(1|1; \mathbb{C}^{2|2}) \hookrightarrow G(a|b; \mathbb{C}^{n|m}):$ as 
$G(1|1; \mathbb{C}^{2|2})$ is non-projective, so is $G(a|b; \mathbb{C}^{n|m})$, completing the proof.\end{proof}
\noindent The upshot of this result is that, working in the context of algebraic supergeometry, it is no longer true that projective superspaces are a privileged ambient: this is a substantial departure from usual context of complex algebraic geometry, that deserves to be stressed out.

\section{Maps and Embeddings into a Super Grassmannian: An Explicit Example}

\noindent Having reviewed the geometry of super Grassmannians in the previous section, we now consider the problem of setting up \emph{maps} to super Grassmannians. \\
First we recall the universal property characterizing the construction of maps into projective superspaces $\proj {n|m}$, which is nothing but a direct generalization of the usual criterium in algebraic geometry for projective spaces $\proj n$, using invertible sheaves, \emph{i.e.}\ for any supermanifold or superscheme $\mani$, any locally-free sheaf $\mathcal{L}$ of rank $1|0$ on $\mani$ and any vector superspace $V$ having a surjective sheaf-theoretical map $V\otimes \mathcal{O}_\mani \rightarrow \mathcal{L}$, then there exists a unique (up to isomorphisms) map $\Phi_{\mathcal{L}} : \mani \rightarrow \proj {n|m}$ such that the inclusion $\mathcal{L}^\ast \rightarrow V^\ast \otimes \mathcal{O}_\mani$ is the pull-back of the inclusion $\mathcal{O}_{\proj {n|m}} (-1) \to \mathcal{O}_{\proj {n|m}}^{\oplus n+1|m}$ coming from the Euler exact sequence. More concretely, this is sometimes reported simply asking $\mathcal{L}$ to be \emph{globally-generated}, that is there exists a surjective sheaf-theoretical map $H^0 (\mathcal{L}) \otimes \mathcal{O}_\mani \to \mathcal{E}$, with $\dim H^0 (\mathcal{L}) = n+1 |m$. Then there exists a unique map up to isomorphism $\Phi_\mathcal{L} : \mani \to \proj {n|m}$ such that $\mathcal{E} = \Phi^\ast_\mathcal{L} (\mathcal{O}_{\proj {n|m}} (1))$ and such that, if $H^0 (\mathcal{L}) = \mbox{span}_\mathbb{C} \{ s_i | \xi_j \}$, then $s_i = \Phi^\ast_\mathcal{L} (X_i)$ and $\xi_j = \Phi^\ast_\mathcal{L} (\Theta_j)$ for $i=0, \ldots, n $ and $j= 1, \ldots, m$, where $X_i |\Theta_j$ are the generating sections of $ H^0(\mathcal{O}_{\proj {n|m}} (1))$, where we recall that $\mathcal{O}_{\proj {n|m}} (1) \defeq \pi^\ast \mathcal{O}_{\proj {n}} (1) = \pi^{-1} \mathcal{O}_{\proj n} (1) \otimes_{\pi^{-1} \mathcal{O}_{\proj n}} \mathcal{O}_{\proj {n|m}},$ see \cite{CN}, where invertible sheaves on projective superspaces are studied.

A very similar situation happens in the case of super Grassmannians, but instead of invertible sheaves one has to deal with locally-free sheaves of higher rank / vector bundles, in order to appropriately set up maps. Indeed, let $G = G(a|b, V)$ be a super Grassmannian, then it is has the following \emph{universal property} that characterizes the maps toward it \cite{CNR}: \vspace{5pt}\\
{\bf Universal Property:} for any supermanifold or superscheme $\mani$, any locally-free sheaf of $\stsheaf$-modules $\mathcal{E}$ of rank $a|b$ on $\mani$ and any vector superspace $V$ with a surjective sheaf-theoretical map
 $V\otimes \mathcal{O}_\mani \to \mathcal{E}$, then there exists a {\em unique} map $\Phi:\mani\to G(a|b, V)$ such that the inclusion 
$\mathcal{E}^\ast \to V^\ast\otimes \mathcal{O}_\mani$ is the pull-back of the inclusion $\mathcal{S}_G\to \mathcal{O}_G^{\oplus n|m}$ from the sequence 
\bear \label{tautsuper}
\xymatrix@R=1.5pt{
0 \ar[rr] && \mathcal{S}_G \ar[rr] && \mathcal{O}_{G}^{\oplus n|m} \ar[rr] && \widetilde{\mathcal{S}}_G^\ast \ar[rr] &&  0.
}  
\eear
where $\mathcal{S}_G$ is the tautological sheaf of the super Grassmannian.\vspace{.3cm}\\ 
\noindent Using the universal property above, we now explicitly show that there exists a map from a non-projected non-projective supermanifold of the family $\proj 2_{\omega } (\mathcal{F}_\mani)$, namely that one characterized by the decomposable fermionic sheaf $\mathcal{F}_\mani \defeq \Pi \mathcal{O}_{\proj 2} (-1)\oplus \Pi \mathcal{O}_{\proj 2} (-2)$, to a certain super Grassmannian, namely $G(2|2, \mathbb{C}^{12|12}).$ \\
For future use, we start giving in the following lemma the explicit form of the transition functions of this supermanifold in the case one chooses a decomposable fermionic sheaf as the one above.
\begin{lemma}[Transition functions] \label{transdec} Let $\proj {2}_\omega(\mathcal{F}_\mani)$ be the non-projected supermanifold with
$\mathcal{F}_\mani = \Pi \mathcal{O}_{\proj 2} (-1) \oplus \Pi \mathcal{O}_{\proj 2} (-2)$. Then, its transition functions take the following form:
\begin{align}\label{trans1}
& \mathcal{U}_{0} \cap \mathcal{U}_{1} : \qquad z_{10} = \frac{1}{z_{11}}, \qquad \quad z_{20} = \frac{z_{21}}{z_{11}} + \lambda \frac{\theta_{11} \theta_{21}}{(z_{11})^2};  & \qquad \theta_{10} = \frac{\theta_{11}}{z_{11}}, \qquad \theta_{20}  = \frac{\theta_{21}}
{(z_{11})^2};\nonumber \\  
& \mathcal{U}_{1} \cap \mathcal{U}_{2} : \qquad  z_{11} = \frac{z_{12}}{z_{22}} + \lambda \frac{\theta_{12} \theta_{22}}{(z_{22})^2}, \quad \qquad  z_{21} = \frac{1}{z_{22}}; & \qquad  \theta_{11} = \frac{\theta_{12}}{z_{22}}, \qquad \theta_{21}  = \frac{\theta_{22}}
{(z_{22})^2}; \nonumber\\  
& \mathcal{U}_{2} \cap \mathcal{U}_{0} : \qquad  z_{12} = \frac{1}{z_{20}}, \qquad  \quad z_{22} = \frac{z_{10}}{z_{20}} + \lambda \frac{\theta_{10} \theta_{20}}{(z_{20})^2}; & \qquad  \theta_{12} = \frac{\theta_{10}}{z_{10}},  \qquad \theta_{22} = 
\frac{\theta_{20}}{(z_{10})^2}.  
\end{align} 
\end{lemma}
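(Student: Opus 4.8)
The plan is to specialize the general transition functions of Theorem \ref{pi2} to the decomposable fermionic sheaf, the only genuinely new ingredient being the explicit computation of the gluing matrix $M$ attached to $\mathcal{F}_\mani = \Pi\mathcal{O}_{\proj 2}(-1)\oplus \Pi\mathcal{O}_{\proj 2}(-2)$. Recall that Theorem \ref{pi2} already fixes the even part of the transition functions and imposes the constraint $\det M = 1/z_{11}^3$, leaving undetermined only the matrix $M$ encoding the transformation \eqref{transtheta} of the odd frames $\theta_{1i},\theta_{2i}$. Everything therefore reduces to pinning down $M$ from the direct-sum structure of $\mathcal{F}_\mani$.

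First I would exploit decomposability. Since $\mathcal{F}_\mani$ is a direct sum of two line bundles, I choose on each $\mathcal{U}_i$ the odd local frame $(\theta_{1i},\theta_{2i})$ adapted to the splitting, with $\theta_{1i}$ a generator of the $\Pi\mathcal{O}_{\proj 2}(-1)$-summand and $\theta_{2i}$ a generator of the $\Pi\mathcal{O}_{\proj 2}(-2)$-summand. With respect to such adapted frames the matrix $M_{ij}$ in \eqref{transtheta} is diagonal, and its diagonal entries are precisely the transition functions of $\mathcal{O}_{\proj 2}(-1)$ and $\mathcal{O}_{\proj 2}(-2)$ in the standard frames $1/X_i$ and $1/X_i^2$. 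These are $X_j/X_i$ and $(X_j/X_i)^2$, so $M_{ij}=\mathrm{diag}\!\left(X_j/X_i,\,(X_j/X_i)^2\right)$ and $\det M_{ij}=(X_j/X_i)^3$, consistent both with $Sym^2\mathcal{F}_\mani\cong\mathcal{O}_{\proj 2}(-3)$ and, via \eqref{TransMod1}, with the constraint $\det M=1/z_{11}^3$ on $\mathcal{U}_0\cap\mathcal{U}_1$.

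Next I would read off the affine-coordinate expressions. On each double overlap the ratios $X_j/X_i$ are rewritten through \eqref{TransMod1}; on $\mathcal{U}_0\cap\mathcal{U}_1$ one has $X_1/X_0=1/z_{11}$, hence $\theta_{10}=\theta_{11}/z_{11}$ and $\theta_{20}=\theta_{21}/z_{11}^2$, which together with the even part supplied by Theorem \ref{pi2} yields the first line of \eqref{trans1}. The remaining two overlaps $\mathcal{U}_1\cap\mathcal{U}_2$ and $\mathcal{U}_2\cap\mathcal{U}_0$ are treated identically, substituting the appropriate ratios; equivalently they follow from the cyclic symmetry $X_0\mapsto X_1\mapsto X_2\mapsto X_0$ of $\proj 2$, which permutes the three charts and leaves the one-dimensional class $\omega\in H^1(\mathcal{T}_{\proj 2}(-3))\cong\mathbb{C}$, hence its representative $\lambda$, invariant.

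The main obstacle, and the only genuine check, is verifying that the formulas so obtained satisfy the cocycle condition on the triple overlap $\mathcal{U}_0\cap\mathcal{U}_1\cap\mathcal{U}_2$, i.e.\ that composing the three coordinate changes returns the identity. The even equations carry the nilpotent corrections $\lambda\,\theta\theta/(\cdot)^2$, so one must track how these propagate under composition; here the relation $\mathcal{J}_\mani^3=0$ is decisive, since it annihilates every product of three or more odd generators and truncates the computation to first order in $\theta\theta$. Carrying this out confirms both the stated formulas and that a single common value of $\lambda$ is compatible with all three overlaps, completing the proof.
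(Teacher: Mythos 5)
Your proposal is correct and follows essentially the same route as the paper: the paper's proof is a one-line specialization of Theorem \ref{pi2}, noting that for the decomposable sheaf the matrix $M$ is diagonal with entries $\frac{1}{z_{11}}$ and $\frac{1}{z_{11}^2}$ on $\mathcal{U}_0\cap\mathcal{U}_1$ (and similarly on the other overlaps), which is exactly your identification of $M_{ij}=\mathrm{diag}\bigl(X_j/X_i,(X_j/X_i)^2\bigr)$. Your additional cocycle verification on the triple overlap is a reasonable sanity check but is not needed, since Theorem \ref{pi2} already guarantees that these transition functions define the supermanifold.
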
 
\begin{proof} The conclusion follows immediately from Theorem \ref{pi2}, taking into account the transition matrix for the given $\mathcal{F}_\mani$, that have the form $M=\left(\begin{array}{ll} \frac{1}{z_{01}} &0\\ 0& \frac{1}{z_{01}^2}\end{array}\right)$ on 
$\mathcal{U}_{0} \cap \mathcal{U}_{1}$ and similar form on the other two intersections of the fundamental open sets.  \end{proof}

\noindent Now we have to identify a suitable locally-free sheaf to set up the map into the super Grassmannian: a natural choice is given by the \emph{tangent sheaf} $\mathcal{T}_\mani$ of $\mani = \proj 2_\omega (\mathcal{F}_\mani)$ - which is obviously a rank $2|2$ locally-free sheaf in the case we are dealing with - and, possibly, its higher-symmetric powers $Sym^k \mathcal{T}_\mani:$ we will see that, in this case, $\mathcal{T}_\mani$ is actually enough and one does not need to resort to its higher symmetric products.  \\
In the following we will show that the vector superspace of global sections of the tangent sheaf $\mathcal{T}_\mani$, that is the 0-\v{C}ech cohomology space $H^0 (\mathcal{T}_\mani)$, is isomorphic to $\mathbb{C}^{12|12}$, and also, that one has a \emph{surjective} map $H^0 (\mathcal{T}_\mani) \otimes \mathcal{O}_\mani \rightarrow \mathcal{T}_\mani,$ that is the tangent sheaf $\mathcal{T}_\mani$ is \emph{globally-generated}. As in the universal property above, this implies that the choices of the tangent sheaf $\mathcal{T}_\mani$ for $\mathcal{E}$ and of $H^0 (\mathcal{T}_\mani)$ for $V$, lead to the existence of a (unique) map $\mani \to G(2|2, \mathbb{C}^{12|12}).$  \\
In order to prove the above statement, one needs to carefully study the tangent sheaf $\mathcal{T}_\mani$. We start considering the restriction of the tangent sheaf to the reduced manifold $\proj 2$, that is 
\bear
\mathcal{T}_\mani \lfloor_{\proj 2} = \mathcal{T}_\mani \oplus \stsheafred.
\eear
It is a general result that $\mathcal{T}_\mani \lfloor_{\manir} \cong \mathcal{T}_{\manir} \oplus \mathcal{F}_\mani^\ast$, see for example \cite{Manin} or \cite{NojaPhD}, anyway this result can be readily red off once one has the explicit form of the transition functions of the tangent sheaf. Indeed, using the chain rule and starting from the above lemma, with obvious notation, one finds:
\begin{align}
& \partial_{z_{10}} = - (z_{11})^2 \partial_{z_{11}} + [- z_{11} z_{21} + \theta_{11} \theta_{21}] \partial_{z_{21}} - \theta_{11} z_{11}\partial_{\theta_{11}} - 2 \theta_{21} z_{11} \partial_{\theta_{21}},  \nonumber \\
& \partial_{z_{20}} = z_{11} \partial_{z_{21}}, \nonumber \\
& \partial_{\theta_{10}} = - \theta_{21}\partial_{z_{21}} + z_{11} \partial_{\theta_{11}}, \nonumber \\
& \partial_{\theta_{20}} = z_{11} \theta_{11} \partial_{z_{21}} + (z_{11})^2 \partial_{\theta_{21}},
\end{align} 
so that the related Jacobian has the following matrix representation
\bear \label{jaco}
[\mbox{Jac}_{10}] = \left ( \begin{array}{cccc|cccc}
- (z_{11})^2 & & - z_{11} z_{21} + \theta_{11} \theta_{21} & &  - \theta_{11}z_{11} & & - 2 \theta_{21} z_{11} \\
0 & & z_{11} & & 0 & &  0 \\
\hline
0 & & - \theta_{21} & & z_{11} & & 0 \\
0 & & z_{11} \theta_{11} & & 0 & &  (z_{11})^2
\end{array}
\right ).
\eear
The transition functions in the other intersections can be found by $S_3$-symmetry.\\
\noindent We now recall that, having at disposal the structure sheaf of $\stsheaf$ of we can also form a sub-\emph{superscheme} of $\mani$ through the pair $(\proj 2, \mathcal{O}_{\mani}^{(2)} \defeq \slantone{\mathcal{O}_{\mani}}{\mathcal{J}_\mani^2} )$. We stress that this is not a supermanifold: indeed it fails to be locally isomorphic to any local model of the kind $\mathbb{C}^{m|n}$: more generally, it is locally isomorphic to an affine superscheme for some super ring. We call $\mani^{(2)}$ the superscheme defined by the pair $(\proj 2, \mathcal{O}^{(2)}_\mani )$ and we characterize its geometry in the following lemma.
\begin{lemma}[The Superscheme $\mani^{(2)}$] Let $\mani^{(2)}$ be the superscheme as above. Then $\mani $ is a projected scheme and its structure sheaf $\mathcal{O}^{(2)}_\mani$ is given by a locally-free sheaf of $\mathcal{O}_{\proj 2}$-algebras such that  
\bear
\mathcal{O}_{\mani}^{(2)} \cong \mathcal{O}_{\proj 2} \oplus \mathcal{F}_\mani. 
\eear
\end{lemma}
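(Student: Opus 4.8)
The plan is to work chart-by-chart on the standard cover $\{\mathcal{U}_i\}$ of $\proj 2$ and then glue, using the transition functions of Lemma~\ref{transdec}. On each $\mathcal{U}_i$ the structure sheaf is the exterior algebra $\mathcal{O}_{\proj 2}[\theta_{1i},\theta_{2i}]$, so the nilpotent sheaf is $\mathcal{J}_\mani = (\theta_{1i},\theta_{2i})$ and consequently $\mathcal{J}_\mani^2 = (\theta_{1i}\theta_{2i})$, a rank $1|0$ subsheaf. First I would recall, from the computation leading to \eqref{transtheta2}, that globally $\mathcal{J}_\mani^2 \cong Sym^2 \mathcal{F}_\mani \cong \mathcal{O}_{\proj 2}(-3)$; this pins down the ideal we quotient by. Passing to $\mathcal{O}_\mani^{(2)} \defeq \stsheaf/\mathcal{J}_\mani^2$ then kills the generator $\theta_{1i}\theta_{2i}$ on every chart, so locally $\mathcal{O}_\mani^{(2)}$ is freely generated over $\mathcal{O}_{\proj 2}$ by $\{1,\theta_{1i},\theta_{2i}\}$; in particular it is locally free of rank $1|2$ once we know it is a sheaf of $\mathcal{O}_{\proj 2}$-modules, which is the projectedness statement.

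Next I would establish projectedness. The nilpotent sheaf of $\mani^{(2)}$ is $\mathcal{J}_{\mani^{(2)}} = \mathcal{J}_\mani/\mathcal{J}_\mani^2 = \mathcal{F}_\mani$, which is purely odd of rank $0|2$. Its structural exact sequence therefore reads $0 \to \mathcal{F}_\mani \to \mathcal{O}_\mani^{(2)} \to \stsheafred \to 0$, and since $\mathcal{F}_\mani$ contributes nothing in even degree, the even part of this sequence is $0 \to 0 \to \mathcal{O}_{\proj 2} \to \mathcal{O}_{\proj 2} \to 0$, which splits trivially. By the definition of projectedness this already shows $\mani^{(2)}$ is projected. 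The conceptual point worth stressing is that this is precisely where the obstruction vanishes: the only place the non-projectedness class $\omega$ (equivalently the parameter $\lambda$) enters the transition functions of $\mani$ is the term $\lambda\,\theta_{11}\theta_{21}/(z_{11})^2$ appearing in the even coordinate $z_{20}$, and this term lies in $\mathcal{J}_\mani^2$. Modulo $\mathcal{J}_\mani^2$ the even transition functions of Lemma~\ref{transdec} collapse to the ordinary $\proj 2$ ones, so the even part of $\mani^{(2)}$ carries no twist whatsoever.

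Since $\mani^{(2)}$ is projected, its structure sheaf is a sheaf of $\mathcal{O}_{\proj 2}$-modules and splits, by the general fact recalled in the text that a projected superspace satisfies $\stsheaf \cong \stsheafred \oplus \mathcal{J}_\mani$. Applied to $\mani^{(2)}$ this yields $\mathcal{O}_\mani^{(2)} \cong \stsheafred \oplus \mathcal{J}_{\mani^{(2)}} = \mathcal{O}_{\proj 2} \oplus \mathcal{F}_\mani$ as $\mathcal{O}_{\proj 2}$-modules. To upgrade this to the asserted isomorphism of sheaves of $\mathcal{O}_{\proj 2}$-algebras I would identify the multiplicative structure: the $\mathcal{O}_{\proj 2}$-summand is a subalgebra, while the product of two sections of $\mathcal{F}_\mani$ lands in $\mathcal{J}_\mani^2$, which is zero in the quotient, so $\mathcal{F}_\mani$ is a square-zero ideal. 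Hence $\mathcal{O}_\mani^{(2)}$ is exactly the trivial (split) square-zero extension $\mathcal{O}_{\proj 2} \oplus \mathcal{F}_\mani$, which is the stated locally free sheaf of $\mathcal{O}_{\proj 2}$-algebras.

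The one step that needs genuine care — the main, if modest, obstacle — is verifying that the $\mathcal{O}_{\proj 2}$-module splitting is truly global and not merely chartwise, i.e. that the even transition functions induce no mixing of the constant summand with the odd generators. Concretely this can be read off the transition functions of Lemma~\ref{transdec} after discarding the $\lambda$-term modulo $\mathcal{J}_\mani^2$: the surviving even transformations are exactly those of $\proj 2$, and the $\theta$-transformations are $\mathcal{O}_{\proj 2}$-linear and reproduce the cocycle of $\mathcal{F}_\mani$. Equivalently, and more cleanly, the globality is guaranteed abstractly by the triviality of the even structural sequence established in the second paragraph, so no explicit cocycle check is strictly necessary.
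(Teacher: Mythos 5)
Your proposal is correct and follows essentially the same route as the paper: both hinge on the observation that the even part of the nilpotent ideal of $\mani^{(2)}$ vanishes (since $\mathcal{J}_{\mani,0}=\mathcal{J}^2_\mani$ for odd dimension $2$), so the even structural sequence degenerates to the isomorphism $\mathcal{O}^{(2)}_{\mani,0}\cong\mathcal{O}_{\proj 2}$, the odd part is $\mathcal{F}_\mani$, and the algebra structure is the square-zero extension. Your additional chart-level check and the remark that the $\lambda$-term lies in $\mathcal{J}^2_\mani$ are consistent with, and slightly more explicit than, the paper's terse argument.
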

\begin{proof} it is enough to observe that the parity splitting of the structure sheaf reads $\mathcal{O}^{(2)}_\mani = \slantone{\mathcal{O}_{\mani,0}}{\mathcal{J}_\mani^2} \oplus \slantone{\mathcal{O}_{\mani,1 }}{\mathcal{J}_{\mani}^2}$, hence the defining short exact sequence for the even part reduces to an isomorphism $\mathcal{O}_{\mani,0}^{(2)} \cong \mathcal{O}_{\proj 2}$.The structure sheaf gets endowed with a structure of $\mathcal{O}_{\proj 2}$-module given by $\mathcal{O}_{\proj 2} \oplus \mathcal{F}_\mani$, that actually coincides with the parity splitting. \\
We observe that in the $\mathcal{O}_{\proj 2}$-algebra $ \mathcal{O}^{(2)}_\mani \cong \mathcal{O}_{\proj 2} \oplus \mathcal{F}_\mani$ the product $\mathcal{F}_\mani \otimes_{\mathcal{O}_{\proj 2}} \mathcal{F}_\mani \rightarrow \mathcal{O}_{\proj 2}$ is null. \end{proof}
\noindent Pushing the characterization of the tangent sheaf a little bit further, we have to study the geometry of tangent bundle $\mathcal{T}_\mani$ when restricted to the sub superscheme $\mani^{(2)}$. Once again it can be proved that the following general isomorphism holds true
\bear
\mathcal{T}_\mani \lfloor_{\mani^{(2)}} \cong \mathcal{T}_{\manir} \oplus \mathcal{E}nd (\mathcal{F}_\mani) \oplus \mathcal{F}_\mani^\ast \oplus (\mathcal{T}_{\manir} \otimes \mathcal{F}_\mani )
\eear
where the first two summands are the even part and the second two summands are the odd part of the sheaf.
In particular, in our case one gets:
\begin{lemma}[The Sheaf $\mathcal{T}_{\mani} \lfloor_{\mani^{(2)}}$] The sheaf $\mathcal{T}_\mani \lfloor_{\mani^{(2)}}$ is a locally-free of $\mathcal{O}_{\proj 2}$-module, moreover the following isomorphism holds
\bear
\mathcal{T}_\mani \lfloor_{\mani^{(2)}} \cong \slanttwo{\mathcal{T}_\mani}{\mathcal{J}^2 \mathcal{T}_\mani} \cong \mathcal{T}_\mani \lfloor_{\proj 2} \oplus \left ( \mathcal{T}_\mani \lfloor_{\proj 2} \otimes_{\mathcal{O}_{\proj 2}} \mathcal{F}_\mani \right )
\eear
\end{lemma}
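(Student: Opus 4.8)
The plan is to make the restriction explicit as a quotient of $\mathcal{T}_\mani$, verify local freeness over $\mathcal{O}_{\proj 2}$, and then merely regroup the general isomorphism recalled immediately before the statement, the genuine work being hidden in that general isomorphism. First I would unwind the restriction: restricting a sheaf to the closed sub-superscheme $\mani^{(2)}=(\proj 2,\mathcal{O}^{(2)}_\mani)$ means tensoring over $\stsheaf$ with $\mathcal{O}^{(2)}_\mani=\slantone{\stsheaf}{\mathcal{J}_\mani^2}$, and since the image of $\mathcal{J}_\mani^2\otimes_{\stsheaf}\mathcal{T}_\mani$ in $\mathcal{T}_\mani$ is exactly $\mathcal{J}_\mani^2\mathcal{T}_\mani$, right-exactness of the tensor product gives at once
\[
\mathcal{T}_\mani\lfloor_{\mani^{(2)}}\;\defeq\;\mathcal{T}_\mani\otimes_{\stsheaf}\mathcal{O}^{(2)}_\mani\;\cong\;\slanttwo{\mathcal{T}_\mani}{\mathcal{J}_\mani^2\,\mathcal{T}_\mani},
\]
which is the first isomorphism asserted.

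For local freeness over $\mathcal{O}_{\proj 2}$ I would argue in two stages: $\mathcal{T}_\mani$ is locally free of rank $2|2$ over $\stsheaf$, so $\mathcal{T}_\mani\otimes_{\stsheaf}\mathcal{O}^{(2)}_\mani$ is locally free of rank $2|2$ over the truncated algebra $\mathcal{O}^{(2)}_\mani$; and by the previous lemma $\mathcal{O}^{(2)}_\mani\cong\mathcal{O}_{\proj 2}\oplus\mathcal{F}_\mani$ is locally free of rank $1|2$ over $\mathcal{O}_{\proj 2}$, so the composition is locally free over $\mathcal{O}_{\proj 2}$. On a trivialising chart this is transparent: the $\stsheaf$-frame $\partial_{z_1},\partial_{z_2},\partial_{\theta_1},\partial_{\theta_2}$ of $\mathcal{T}_\mani$ descends, after killing $\theta_1\theta_2$, to the twelve $\mathcal{O}_{\proj 2}$-generators $\partial_{z_i}$, $\theta_k\partial_{z_i}$, $\partial_{\theta_j}$, $\theta_k\partial_{\theta_j}$, matching the expected rank $6|6$.

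It then remains to regroup the general isomorphism $\mathcal{T}_\mani\lfloor_{\mani^{(2)}}\cong\mathcal{T}_{\proj 2}\oplus\mathcal{E}nd(\mathcal{F}_\mani)\oplus\mathcal{F}_\mani^\ast\oplus(\mathcal{T}_{\proj 2}\otimes\mathcal{F}_\mani)$ into the compact form of the statement. Since $\mathcal{F}_\mani$ is locally free one has $\mathcal{F}_\mani^\ast\otimes_{\mathcal{O}_{\proj 2}}\mathcal{F}_\mani\cong\mathcal{E}nd(\mathcal{F}_\mani)$, and combining this with the splitting $\mathcal{T}_\mani\lfloor_{\proj 2}\cong\mathcal{T}_{\proj 2}\oplus\mathcal{F}_\mani^\ast$ already recorded above I can factor
\[
(\mathcal{T}_{\proj 2}\oplus\mathcal{F}_\mani^\ast)\otimes_{\mathcal{O}_{\proj 2}}\mathcal{F}_\mani\;\cong\;(\mathcal{T}_{\proj 2}\otimes\mathcal{F}_\mani)\oplus\mathcal{E}nd(\mathcal{F}_\mani),
\]
so that the four summands reassemble as $\mathcal{T}_\mani\lfloor_{\proj 2}\oplus(\mathcal{T}_\mani\lfloor_{\proj 2}\otimes_{\mathcal{O}_{\proj 2}}\mathcal{F}_\mani)$, with even block $\mathcal{T}_{\proj 2}\oplus\mathcal{E}nd(\mathcal{F}_\mani)$ and odd block $\mathcal{F}_\mani^\ast\oplus(\mathcal{T}_{\proj 2}\otimes\mathcal{F}_\mani)$.

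The step I expect to be the main obstacle is the general isomorphism itself, i.e. proving that the $\mathcal{J}_\mani$-adic filtration of $\slanttwo{\mathcal{T}_\mani}{\mathcal{J}_\mani^2\mathcal{T}_\mani}$ genuinely splits rather than merely producing an associated graded. Here I would write the short exact sequence
\[
0\longrightarrow\slanttwo{\mathcal{J}_\mani\mathcal{T}_\mani}{\mathcal{J}_\mani^2\mathcal{T}_\mani}\longrightarrow\slanttwo{\mathcal{T}_\mani}{\mathcal{J}_\mani^2\mathcal{T}_\mani}\longrightarrow\mathcal{T}_\mani\lfloor_{\proj 2}\longrightarrow 0,
\]
and identify the subsheaf, using flatness of $\mathcal{T}_\mani$ over $\stsheaf$, with $(\mathcal{J}_\mani/\mathcal{J}_\mani^2)\otimes_{\stsheaf}\mathcal{T}_\mani\cong\mathcal{F}_\mani\otimes_{\mathcal{O}_{\proj 2}}\mathcal{T}_\mani\lfloor_{\proj 2}$. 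The delicate point is that one cannot split this by naively separating weight-$0$ from weight-$1$ coefficients, because the reduction modulo $\mathcal{J}_\mani^2$ of the transition Jacobian \eqref{jaco} still carries weight-raising odd off-diagonal entries (for instance the $-\theta_{21}\partial_{z_{21}}$ occurring in the transform of $\partial_{\theta_{10}}$), so a priori the extension class in $\mathrm{Ext}^1_{\mathcal{O}_{\proj 2}}(\mathcal{T}_\mani\lfloor_{\proj 2},\mathcal{F}_\mani\otimes\mathcal{T}_\mani\lfloor_{\proj 2})$ need not vanish. What rescues the splitting is the projectedness of $\mani^{(2)}$ proved in the previous lemma: the associated projection $\pi:\mani^{(2)}\to\proj 2$ (equivalently the $\mathcal{O}_{\proj 2}$-algebra splitting $\mathcal{O}^{(2)}_\mani\cong\mathcal{O}_{\proj 2}\oplus\mathcal{F}_\mani$) furnishes a global weight grading on $\mathcal{O}^{(2)}_\mani$, which I would use to correct the lift of the reduced frame by weight-raising terms, checking that the off-diagonal entries of the reduced Jacobian are coboundaries and can be gauged away. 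I would carry out this verification on the cover $\{\mathcal{U}_0,\mathcal{U}_1,\mathcal{U}_2\}$ using the explicit transition functions of Lemma \ref{transdec}, reducing to a single intersection by the $S_3$-symmetry.
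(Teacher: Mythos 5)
Your first two steps coincide with the paper's proof: the paper also defines the restriction as $\mathcal{T}_\mani\otimes_{\stsheaf}\mathcal{O}_{\mani^{(2)}}$, invokes the standard identification with $\slanttwo{\mathcal{T}_\mani}{\mathcal{J}_\mani^2\mathcal{T}_\mani}$, and gets local freeness for free. Where you genuinely diverge is the direct-sum decomposition. The paper disposes of it in one line, substituting $\mathcal{O}^{(2)}_\mani\cong\mathcal{O}_{\proj 2}\oplus\mathcal{F}_\mani$ from the preceding lemma into the tensor product and distributing; you instead treat the splitting of the extension $0\to\mathcal{T}_\mani\lfloor_{\proj 2}\otimes\mathcal{F}_\mani\to\slanttwo{\mathcal{T}_\mani}{\mathcal{J}_\mani^2\mathcal{T}_\mani}\to\mathcal{T}_\mani\lfloor_{\proj 2}\to 0$ as the crux and propose to kill its class by an explicit \v{C}ech computation on $\{\mathcal{U}_0,\mathcal{U}_1,\mathcal{U}_2\}$. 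Your caution is not misplaced: the decomposition $\mathcal{O}^{(2)}_\mani\cong\mathcal{O}_{\proj 2}\oplus\mathcal{F}_\mani$ is a splitting of $\mathcal{O}_{\proj 2}$-modules (the section is an algebra map, not $\stsheaf$-linear, since $\mathcal{J}_\mani$ acts nontrivially on the unit), so distributing $\otimes_{\stsheaf}$ over it is not a purely formal step, and the ambient $\mathrm{Ext}^1_{\mathcal{O}_{\proj 2}}(\mathcal{T}_\mani\lfloor_{\proj 2},\mathcal{T}_\mani\lfloor_{\proj 2}\otimes\mathcal{F}_\mani)$ is not zero (it contains, among other pieces, $H^1(\mathcal{T}_{\proj 2}(-3))\cong\mathbb{C}$). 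So your route buys rigor where the paper's one-liner buys brevity.

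Two caveats on your version. First, the decisive step --- that the weight-one, off-diagonal entries of the reduced Jacobian \eqref{jaco} form a coboundary --- is announced but not carried out, so as written the splitting is still a promise rather than a proof; it does hold (it is implicitly confirmed by the later section count $H^0(\slanttwo{\mathcal{T}_\mani}{\mathcal{J}_\mani^2\mathcal{T}_\mani})\cong\mathbb{C}^{13|12}$ and the explicit global generators), but you must actually exhibit the gauge transformation. Second, your stated mechanism --- that projectedness of $\mani^{(2)}$ ``rescues'' the splitting --- is not by itself sufficient: a locally free module over a split square-zero extension need not split (this is exactly the theory of first-order deformations of a bundle), so the grading on $\mathcal{O}^{(2)}_\mani$ only reduces the question to the cocycle computation; it does not answer it. With that computation supplied, your argument is complete and, if anything, more honest than the paper's.
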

\begin{proof} the claim is proved by computing 
\begin{align}
\mathcal{T}_\mani \lfloor_{\mani^{(2)}} \defeq \mathcal{T}_\mani \otimes_{\stsheaf} \mathcal{O}_{\mani^{(2)}} \cong \mathcal{T}_\mani \otimes_{\stsheaf} \left ( \mathcal{O}_{\proj 2} \oplus \mathcal{F}_\mani \right ) \cong \mathcal{T}_\mani \lfloor_{\proj 2} \oplus \left ( \mathcal{T}_\mani \lfloor_{\proj 2 } \otimes_{\mathcal{O}_{\proj 2}} \mathcal{F}_\mani \right ) 
\end{align}
where we have used that, since $\mathcal{F}_\mani$ is a locally-free sheaf of $\mathcal{O}_{\proj 2}$-module we have that $\mathcal{F}_\mani \cong \mathcal{F}_\mani \otimes_{\mathcal{O}_{\proj 2}} \mathcal{O}_{\proj 2}$. The first isomorphism is a standard result in modules theory (note we have suppressed the subscript $\mani$ in the sheaf of nilpotent element $\mathcal{J}_\mani$ for a better notation).
\end{proof}
\noindent For computational purposes, the sheaf $\mathcal{E} \lfloor_{\mani^{(2)}}$ can be made more explicit in its $\mathcal{O}_{\proj 2}$-module structure, indeed by making explicit its components one finds
\begin{align} \label{deco}
\slanttwo{\mathcal{T}_\mani}{\mathcal{J}_\mani^2\mathcal{T}_\mani} \cong \left [ \mathcal{T}_{\proj 2} \oplus \mathcal{O}_{\proj 2} (1) \oplus \mathcal{O}^{\oplus 2}_{\proj 2} \oplus \mathcal{O}_{\proj 2} (-1) \right] \oplus \Pi \left [ \mathcal{T}_{\proj 2} (-2) \oplus \mathcal{T}_{\proj 2} (-1) \oplus \mathcal{O}_{\proj 2} (2) \oplus \mathcal{O}_{\proj 2} (1) \right ].  
\end{align}
This decomposition will be useful once we have to compute the cohomology. \\

In order to compute the number of the global sections of the tangent sheaf of $\mani$, as to identify the supposed target super Grassmannian, we actually need one further sheaf, that we will study in the following lemma.
\begin{lemma}[The Sheaf $\mathcal{T}_\mani \otimes_{\stsheaf } \mathcal{J}^2_\mani$] The sheaf $\mathcal{T}_\mani \otimes_{\stsheaf } \mathcal{J}^2_\mani$ is isomorphic to ${\mathcal{J}^2} \mathcal{T}_\mani$. Moreover it is a locally-free sheaf of $\mathcal{O}_{\proj 2}$-modules and as such, it is isomorphic to $\mathcal{T}_\mani \lfloor_{\proj 2} (-3).$
\end{lemma}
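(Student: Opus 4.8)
The plan is to establish the three assertions one after another, reducing each to the local-freeness (hence flatness) of $\mathcal{T}_\mani$ over $\stsheaf$ together with the relation $\mathcal{J}^3_\mani = 0$. First I would treat the identification with $\mathcal{J}^2_\mani \mathcal{T}_\mani$. There is a natural multiplication morphism $\mu : \mathcal{T}_\mani \otimes_{\stsheaf} \mathcal{J}^2_\mani \to \mathcal{T}_\mani$ sending $t \otimes a \mapsto a \cdot t$, whose image is by definition the submodule $\mathcal{J}^2_\mani \mathcal{T}_\mani \subseteq \mathcal{T}_\mani$; so it suffices to prove that $\mu$ is injective. To this end I would tensor the inclusion of sheaves $\mathcal{J}^2_\mani \hookrightarrow \stsheaf$ with $\mathcal{T}_\mani$ over $\stsheaf$. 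Since $\mathcal{T}_\mani$ is locally isomorphic to a free $\stsheaf$-module --- locally $\stsheaf\lfloor_{\mathcal{U}_i}$ is the exterior algebra $\mathcal{O}_{\proj 2}[\theta_{1i}, \theta_{2i}]$ and $\mathcal{T}_\mani$ is free over it --- it is flat, and tensoring preserves the injection. Hence $\mu$ identifies $\mathcal{T}_\mani \otimes_{\stsheaf} \mathcal{J}^2_\mani$ with $\mathcal{J}^2_\mani \mathcal{T}_\mani$, which is the first claim.

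For the remaining two claims I would first observe that $\mathcal{J}_\mani$ annihilates the tensor product. Indeed, since $\mathcal{J}^3_\mani = 0$, for any local sections $n \in \mathcal{J}_\mani$ and $a \in \mathcal{J}^2_\mani$ one has $n a \in \mathcal{J}^3_\mani = 0$, so in $\mathcal{T}_\mani \otimes_{\stsheaf} \mathcal{J}^2_\mani$ we get $(n\,t) \otimes a = t \otimes (n a) = 0$. Therefore the sheaf is a module over $\stsheaf / \mathcal{J}_\mani \cong \mathcal{O}_{\proj 2}$. Writing $\mathcal{J}^2_\mani$ as a sheaf pulled back from $\stsheaf / \mathcal{J}_\mani$ and using associativity of the tensor product, I would then compute
\begin{align}
\mathcal{T}_\mani \otimes_{\stsheaf} \mathcal{J}^2_\mani
&\cong \left( \mathcal{T}_\mani \otimes_{\stsheaf} \mathcal{O}_{\proj 2} \right) \otimes_{\mathcal{O}_{\proj 2}} \mathcal{J}^2_\mani \nonumber \\
&\cong \mathcal{T}_\mani \lfloor_{\proj 2} \otimes_{\mathcal{O}_{\proj 2}} \mathcal{O}_{\proj 2}(-3) \cong \mathcal{T}_\mani \lfloor_{\proj 2}(-3),
\end{align}
where I have used that $\mathcal{T}_\mani \otimes_{\stsheaf} \mathcal{O}_{\proj 2}$ is precisely the restriction $\mathcal{T}_\mani \lfloor_{\proj 2}$, and that $\mathcal{J}^2_\mani$ is the invertible sheaf generated by the products $\theta_{1i}\theta_{2i}$, hence $\mathcal{J}^2_\mani \cong Sym^2 \mathcal{F}_\mani \cong \mathcal{O}_{\proj 2}(-3)$ by \eqref{transtheta2}. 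Since $\mathcal{T}_\mani \lfloor_{\proj 2} \cong \mathcal{T}_{\proj 2} \oplus \mathcal{F}_\mani^\ast$ is locally-free over $\mathcal{O}_{\proj 2}$ and twisting by an invertible sheaf preserves local freeness, this simultaneously proves that the sheaf is locally-free over $\mathcal{O}_{\proj 2}$ and identifies it with $\mathcal{T}_\mani \lfloor_{\proj 2}(-3)$.

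The only genuinely delicate point is the injectivity of $\mu$ in the first step: a priori the multiplication map only exhibits $\mathcal{T}_\mani \otimes_{\stsheaf} \mathcal{J}^2_\mani$ as surjecting onto $\mathcal{J}^2_\mani \mathcal{T}_\mani$, and collapsing this to an isomorphism is exactly where flatness of $\mathcal{T}_\mani$ --- in the $\mathbb{Z}_2$-graded sense --- is needed. If one prefers a completely explicit verification, the same outcome can be read off the Jacobian \eqref{jaco}: multiplying each of its columns by the even generator $\theta_{1i}\theta_{2i}$ of $\mathcal{J}^2_\mani$ and invoking $\mathcal{J}^3_\mani = 0$ annihilates every entry containing an odd coordinate, so the surviving transition matrix is that of $\mathcal{T}_\mani \lfloor_{\proj 2}$ rescaled by $(X_j/X_i)^3$, recovering the twist by $\mathcal{O}_{\proj 2}(-3)$.
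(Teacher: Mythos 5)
Your argument is correct and follows essentially the same route as the paper: the paper likewise uses flatness of the locally-free sheaf $\mathcal{T}_\mani$ (tensoring the short exact sequence $0\to\mathcal{J}^2_\mani\to\stsheaf\to\slantone{\stsheaf}{\mathcal{J}^2_\mani}\to 0$ rather than just the inclusion, which is the same flatness argument) to get $\mathcal{T}_\mani\otimes_{\stsheaf}\mathcal{J}^2_\mani\cong\mathcal{J}^2_\mani\mathcal{T}_\mani$, and then identifies $\mathcal{J}^2_\mani\cong Sym^2\mathcal{F}_\mani\cong\mathcal{O}_{\proj 2}(-3)$. Your additional care with the $\mathcal{O}_{\proj 2}$-module structure and the base-change step only makes explicit what the paper leaves implicit.
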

\begin{proof} First of all we recall that $\mathcal{J}^2_\mani$ is a $\mathcal{O}_{\proj 2}$-module as it is killed by multiplication by $\mathcal{J}_\mani$. Moreover the tangent sheaf $\mathcal{T}_\mani$ is locally-free, and therefore it is flat, hence the functor $- \otimes_{\stsheaf} \mathcal{E}$ is exact. Let then us consider the short exact sequence
\bear
\xymatrix@R=1.5pt{
0 \ar[r] & \mathcal{J}_\mani^2 \ar[r] & \stsheaf \ar[r] & \slantone{\stsheaf}{\mathcal{J}^2_\mani} \ar[r] & 0. 
}  
\eear
By tensoring with $\mathcal{T}_\mani$ we get the short exact sequence
\bear
\xymatrix{
0 \ar[r] & \mathcal{J}_\mani^2\otimes_{\stsheaf} \mathcal{T}_\mani \ar[r] & \stsheaf \otimes_{\stsheaf} \mathcal{T}_\mani \cong \mathcal{T}_\mani \ar[r] & \slantone{\stsheaf}{\mathcal{J}^2_\mani} \otimes_{\stsheaf} \mathcal{T}_\mani \cong \slanttwo{\mathcal{T}_\mani}{\mathcal{J}^2 \mathcal{T}_\mani} \ar[r] & 0 \nonumber
}  
\eear
that implies that $\mathcal{J}^2\otimes_{\stsheaf} \mathcal{T}_\mani$ is indeed isomorphic to $\mathcal{J}^2_\mani \mathcal{T}_\mani$. Moreover we have that $\mathcal{J}_\mani^2 \cong Sym^2 \mathcal{F}_\mani$ and as such it is a $\mathcal{O}_{\proj 2}$-module, moreover, since $\mathcal{F}_\mani = \Pi \left ( \mathcal{O}_{\proj 2} (-1) \oplus \mathcal{O}_{\proj 2} (-2)\right )$, we have that $Sym^2 \mathcal{F}_\mani \cong \mathcal{O}_{\proj 2} (-3).$
\end{proof}
\noindent  We are now in the position to study the global sections of the tangent sheaf $\mathcal{T}_\mani$: the main tool we will use is the following exact sequence,
\bear \label{vectorb}
\xymatrix{
0 \ar[rr] && \mathcal{J}^2\mathcal{T}_\mani \ar[rr] && \mathcal{T}_\mani \ar[rr] && \slanttwo{\mathcal{T}_\mani}{\mathcal{J}_\mani^2 \mathcal{T}_\mani} \ar[rr] && 0 
}  
\eear
together with its long cohomology exact sequence. The previous lemmas together yields the following result.
\begin{lemma} The zeroth and the first cohomology groups of the sheaves $\mathcal{J}_\mani^2 \mathcal{T}_\mani$ and ${\mathcal{T}_\mani}/{\mathcal{J}^2_\mani \mathcal{T}_\mani}$ are given by
\begin{align}
& H^0 (\mathcal{J}_\mani^2 \mathcal{T}_\mani) = 0  & H^1 (\mathcal{J}^2 \mathcal{T}_\mani) = \mathbb{C}^{1|0} \\
& H^0 \left ({\mathcal{T}}/{\mathcal{J}_\mani^2 \mathcal{T}_\mani} \right ) = \mathbb{C}^{13|12}   & H^1 \left ( {\mathcal{T}_\mani}/{\mathcal{J}_\mani^2 \mathcal{T}_\mani} \right )  = 0.
\end{align}
\end{lemma}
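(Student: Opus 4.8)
The plan is to reduce both cohomology computations to decomposable sheaves on $\proj 2$ furnished by the two preceding lemmas, and then to evaluate each cohomology group summand-by-summand using the Euler sequence on $\proj 2$ together with the standard cohomology of line bundles on the projective plane. The essential external inputs are that $H^1$ of every line bundle on $\proj 2$ vanishes, that $H^2(\mathcal{O}_{\proj 2}(d))\cong H^0(\mathcal{O}_{\proj 2}(-3-d))^\ast$ by Serre duality, and that $h^0(\mathcal{O}_{\proj 2}(d))=\binom{d+2}{2}$ for $d\geq 0$.

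For $\mathcal{J}^2_\mani\mathcal{T}_\mani$ I would start from the isomorphism $\mathcal{J}^2_\mani\mathcal{T}_\mani\cong\mathcal{T}_\mani\lfloor_{\proj 2}(-3)$ of the previous lemma, combined with $\mathcal{T}_\mani\lfloor_{\proj 2}\cong\mathcal{T}_{\proj 2}\oplus\mathcal{F}_\mani^\ast$. Since $\mathcal{F}_\mani=\Pi(\mathcal{O}_{\proj 2}(-1)\oplus\mathcal{O}_{\proj 2}(-2))$ and the twist by $\mathcal{O}_{\proj 2}(-3)$ is parity-preserving, this gives
\[
\mathcal{J}^2_\mani\mathcal{T}_\mani\cong\mathcal{T}_{\proj 2}(-3)\oplus\Pi\big(\mathcal{O}_{\proj 2}(-2)\oplus\mathcal{O}_{\proj 2}(-1)\big),
\]
with even part $\mathcal{T}_{\proj 2}(-3)$ and odd part the two negative line bundles. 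The odd line bundles have vanishing $H^0$ and $H^1$, so they contribute nothing. For the even summand I would twist the Euler sequence $0\to\mathcal{O}\to\mathcal{O}(1)^{\oplus 3}\to\mathcal{T}_{\proj 2}\to 0$ by $\mathcal{O}(-3)$: the $H^0$ and $H^1$ of $\mathcal{O}(-3)$ and $\mathcal{O}(-2)$ all vanish, while $H^2(\mathcal{O}(-3))\cong\mathbb{C}$ and $H^2(\mathcal{O}(-2))=0$, so the long exact sequence yields $H^0(\mathcal{T}_{\proj 2}(-3))=0$ and $H^1(\mathcal{T}_{\proj 2}(-3))\cong\mathbb{C}$. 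Hence $H^0(\mathcal{J}^2_\mani\mathcal{T}_\mani)=0$ and $H^1(\mathcal{J}^2_\mani\mathcal{T}_\mani)=\mathbb{C}^{1|0}$.

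For the quotient $\mathcal{T}_\mani/\mathcal{J}^2_\mani\mathcal{T}_\mani$ I would feed in the explicit $\mathcal{O}_{\proj 2}$-module decomposition \eqref{deco}, each summand of which is a twist of $\mathcal{T}_{\proj 2}$ or a line bundle on $\proj 2$. Every first cohomology group vanishes: line bundles on $\proj 2$ have no $H^1$, and the twisted Euler sequences for $\mathcal{T}_{\proj 2}$, $\mathcal{T}_{\proj 2}(-1)$ and $\mathcal{T}_{\proj 2}(-2)$ force $H^1=0$ in each case. The zeroth cohomology is then a sum of dimensions: the even part contributes $h^0(\mathcal{T}_{\proj 2})+h^0(\mathcal{O}(1))+h^0(\mathcal{O}^{\oplus 2})+h^0(\mathcal{O}(-1))=8+3+2+0=13$, and the odd part contributes $h^0(\mathcal{T}_{\proj 2}(-2))+h^0(\mathcal{T}_{\proj 2}(-1))+h^0(\mathcal{O}(2))+h^0(\mathcal{O}(1))=0+3+6+3=12$. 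This gives $H^0(\mathcal{T}_\mani/\mathcal{J}^2_\mani\mathcal{T}_\mani)=\mathbb{C}^{13|12}$ and $H^1=0$.

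The argument is essentially bookkeeping once the two decomposition lemmas are in hand; the only genuinely delicate point is the even summand $\mathcal{T}_{\proj 2}(-3)$, where the single class in $H^1$ appears not directly but through the connecting homomorphism onto $H^2(\mathcal{O}_{\proj 2}(-3))\cong\mathbb{C}$ in the twisted Euler sequence. The other place to be careful is tracking the $\mathbb{Z}_2$-parities across both decompositions, since any misassignment there would spoil the final $n|m$ counts recorded in $\mathbb{C}^{1|0}$ and $\mathbb{C}^{13|12}$.
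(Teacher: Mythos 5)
Your proposal is correct and follows exactly the route the paper intends: the paper's own proof is just the one-line remark that the result "follows from a straightforward computation, once given the decomposition into direct sums of the sheaves above," and your summand-by-summand evaluation via the twisted Euler sequences and the cohomology of line bundles on $\proj 2$ is precisely that computation, carried out with the correct parities and dimension counts.
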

\begin{proof} the result follows from a straightforward computation, once given the decomposition into direct sums of the sheaves above. 
\end{proof}
\noindent We are thus led to the following theorem, which is the main step toward the realization of an embedding into a super Grassmannian. 
\begin{theorem}[Global Sections of $\mathcal{T}_\mani$] \label{maintheorem}The tangent sheaf $\mathcal{T}_\mani$ of $\proj {2}_\omega (\mathcal{F}_\mani)$ has $12|12$ global sections. 
\end{theorem}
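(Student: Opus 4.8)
The plan is to run the short exact sequence \eqref{vectorb} through its long exact cohomology sequence and simply read off $H^0(\mathcal{T}_\mani)$ using the groups tabulated in the preceding lemma. Explicitly, \eqref{vectorb} gives
\[
0 \longrightarrow H^0(\mathcal{J}_\mani^2\mathcal{T}_\mani) \longrightarrow H^0(\mathcal{T}_\mani) \longrightarrow H^0(\mathcal{T}_\mani/\mathcal{J}^2_\mani\mathcal{T}_\mani) \stackrel{\delta}{\longrightarrow} H^1(\mathcal{J}^2\mathcal{T}_\mani) \longrightarrow \cdots,
\]
and substituting $H^0(\mathcal{J}_\mani^2\mathcal{T}_\mani)=0$, $H^0(\mathcal{T}_\mani/\mathcal{J}^2_\mani\mathcal{T}_\mani)=\mathbb{C}^{13|12}$ and $H^1(\mathcal{J}^2\mathcal{T}_\mani)=\mathbb{C}^{1|0}$ identifies $H^0(\mathcal{T}_\mani)\cong\ker\delta$, so the whole theorem reduces to controlling the connecting map $\delta:\mathbb{C}^{13|12}\to\mathbb{C}^{1|0}$. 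Since the target is purely even, the odd component $\delta_1$ vanishes by parity, and the odd part of the sequence gives at once $H^0(\mathcal{T}_\mani)_1\cong\mathbb{C}^{12}$. Thus everything hinges on the even map $\delta_0:\mathbb{C}^{13}\to\mathbb{C}$: if I can show $\delta_0\neq 0$, hence surjective, then $H^0(\mathcal{T}_\mani)_0\cong\ker\delta_0\cong\mathbb{C}^{12}$ and the count $12|12$ follows.

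Proving $\delta_0\neq 0$ is the core of the argument, and it is precisely here that the non-projectedness of $\mani$ (i.e.\ $\omega\neq 0$, equivalently $\lambda\neq 0$ in Theorem \ref{pi2}) must be used: if $\omega$ vanished every even section of the quotient would lift and the answer would be $13|12$. Recalling the decomposition \eqref{deco}, the even part of $\mathcal{T}_\mani/\mathcal{J}^2_\mani\mathcal{T}_\mani$ contains the summand $\mathcal{E}nd(\mathcal{F}_\mani)$, whose trivial summands $\mathcal{O}_{\proj 2}$ are spanned by the diagonal endomorphisms $\theta_{1i}\partial_{\theta_{1i}}$ and $\theta_{2i}\partial_{\theta_{2i}}$. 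I would single out the global even section $\bar{D}$ of $\mathcal{T}_\mani/\mathcal{J}^2_\mani\mathcal{T}_\mani$ given on each chart $\mathcal{U}_i$ by $\theta_{1i}\partial_{\theta_{1i}}$; using \eqref{jaco} (and its $S_3$-images) one checks these local expressions agree modulo $\mathcal{J}^2_\mani\mathcal{T}_\mani$, so that $\bar{D}$ is genuinely global.

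To evaluate $\delta(\bar{D})$ I would take the tautological local lifts $s_i\defeq\theta_{1i}\partial_{\theta_{1i}}\in\mathcal{T}_\mani(\mathcal{U}_i)$ and form the \v{C}ech coboundary $\{s_i-s_j\}$, valued in $\mathcal{J}^2\mathcal{T}_\mani\cong\mathcal{T}_{\proj 2}(-3)$. A direct substitution of the chart-change rules yields $\theta_{10}\partial_{\theta_{10}}=\theta_{11}\partial_{\theta_{11}}-\tfrac{\theta_{11}\theta_{21}}{z_{11}}\partial_{z_{21}}$ on $\mathcal{U}_0\cap\mathcal{U}_1$, so that $s_0-s_1=-\tfrac{\theta_{11}\theta_{21}}{z_{11}}\partial_{z_{21}}$, and similarly on the other overlaps. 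The decisive observation is that, after identifying $\theta_{1i}\theta_{2i}$ with the standard basis of $Sym^2\mathcal{F}_\mani\cong\mathcal{O}_{\proj 2}(-3)$ as in \eqref{transtheta2} and using $\partial_{z_{20}}=z_{11}\partial_{z_{21}}$ from \eqref{jaco}, this coboundary is exactly (a nonzero $\lambda$-multiple of) the cocycle $\tfrac{\theta_{11}\theta_{21}}{(z_{11})^2}\partial_{z_{20}}$ that represents the obstruction class $\omega\in H^1(\mathcal{T}_{\proj 2}(-3))$ appearing in the even transition functions \eqref{trans1}. Hence $\delta_0(\bar{D})$ is a nonzero multiple of $\omega$, which is nonzero precisely because $\mani$ is non-projected.

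I expect the main obstacle to be this final identification: one must verify not merely that $\{s_i-s_j\}$ lands in $H^1(\mathcal{T}_{\proj 2}(-3))\cong\mathbb{C}$, but that it is \emph{not} a \v{C}ech coboundary. The cleanest route, which I would follow, is to match it term by term with the $\lambda$-term in \eqref{trans1}, thereby reducing non-vanishing to the standing hypothesis $\omega\neq 0$; one could alternatively note that the $\mathcal{T}_{\proj 2}$-directions all lift because $\omega$ is $\mathrm{Aut}(\proj 2)$-invariant, so that the obstruction can only come from $\mathcal{E}nd(\mathcal{F}_\mani)$. Granting $\delta_0$ surjective, the even part of the long exact sequence gives $\dim H^0(\mathcal{T}_\mani)_0=13-1=12$, which together with $H^0(\mathcal{T}_\mani)_1\cong\mathbb{C}^{12}$ yields $H^0(\mathcal{T}_\mani)\cong\mathbb{C}^{12|12}$, as claimed.
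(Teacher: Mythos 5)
Your proposal is correct and follows essentially the same route as the paper: the long exact sequence of \eqref{vectorb}, reduction to the non-vanishing of the connecting map $\delta$ on the trivial summand $\mathcal{O}_{\proj 2}^{\oplus 2}$ of \eqref{deco}, and the explicit computation of the coboundary of $\theta_{1i}\partial_{\theta_{1i}}$, which is identified with the cocycle representing the obstruction class $\omega\in H^1(\mathcal{T}_{\proj 2}(-3))$ and is therefore nonzero by non-projectedness. The only addition is your explicit parity remark that $\delta$ must vanish on the odd part, which the paper leaves implicit in the count $13|12\mapsto 1|0$.
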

\begin{proof} Using the results of the previous lemma, the long exact cohomology sequence given by (\ref{vectorb}) reads
\bear
\xymatrix{
0 \ar[r] & H^0 (\mathcal{T}_\mani) \ar[r] & \mathbb{C}^{13|12} \ar[r]^\delta & \mathbb{C}^{1|0} \ar[r] & H^1(\mathcal{T}_\mani) \ar[r] & 0
}  
\eear
Therefore, since $H^1 (\mathcal{J}^2_\mani \mathcal{T}_\mani) \cong \mathbb{C}^{1|0}$ is $1$-dimensional, in order to prove surjectivity of the connection homomorphism $\delta : H^0 (\mathcal{T}_\mani / \mathcal{J}_\mani^2\mathcal{T}_\mani) \rightarrow H^1 (\mathcal{J}_\mani^2 \mathcal{T}_\mani)$, it is enough to show that it is not zero. To this end, we observe that in the decomposition (\ref{deco}) there is a term of the kind $\mathcal{O}^{\oplus2}_{\proj 2} \supset \mathcal{T}_\mani/{\mathcal{J}_\mani^2\mathcal{T}_\mani}$. It is easy to realize that the corresponding global sections $H^0 (\mathcal{O}_{\proj 2}\oplus \mathcal{O}_{\proj 2}) \subset H^0(\mathcal{E}/\mathcal{J}^2\mathcal{E})$ are of the form
\bear
s_1 = \theta_{1i} \otimes \partial_{\theta_{1i}} \qquad \qquad s_2 = \theta_{2i} \otimes \partial_{\theta_{2i}}
\eear
that we write multiplicatively as $ \theta_{1i} \partial_{\theta_{1i}}$ and $\theta_{2i} \partial_{\theta_{2i}}$ (both taken $\mbox{mod}\,\mathcal{J}^2_\mani$), indeed, changing coordinates, by means of the transformation rules obtained above, we get for example: 
\begin{align}
\theta_{10} \partial_{\theta_{10}} = \theta_{11} \partial_{\theta_{11}} -  \frac{\theta_{11} \theta_{21}}{z_{11}} \partial_{z_{21}} =  \theta_{11} \partial_{\theta_{11}}\, \mbox{mod}\, \mathcal{J}_\mani^2
\end{align} 
and, on the other hand we have that
\begin{align}
\left ( \theta_{10} \partial_{\theta_{10}} - \theta_{10} \partial_{\theta_{10}} \right )\Big \lfloor_{\mathcal{U}_0 \cap \mathcal{U}_1} = \frac{\theta_{11} \theta_{21}}{z_{11}} \partial_{z_{21}} \in \mathcal{J}_\mani^2\mathcal{T}_{\mani} (\mathcal{U}_0 \cap \mathcal{U}_1).
\end{align}
That is, we have that $\delta (s_1 ) \neq 0.$ Now, observing that $\frac{\theta_{11} \theta_{21}}{z_{11}} \partial_{z_{21}} = \frac{\theta_{11} \theta_{21}}{(z_{11})^2} \partial_{z_{20}}$, we conclude that
\bear
\left \{ \theta_{11} \partial_{\theta_{11}} - \theta_{10} \partial_{\theta_{10}} , \,  \theta_{12} \partial_{\theta_{12}} - \theta_{11} \partial_{\theta_{11}} , \, \theta_{10} \partial_{\theta_{10}} - \theta_{12} \partial_{\theta_{12}} \right \} \in Z^{1} (\mathcal{T}_{\proj {2}} (-3) ) \nonumber 
\eear
represents the same cocycle of $\mathcal{T}_{\proj 2} (-3)$ that determines the non-vanishing class $\omega \in H^1 (\mathcal{T}_{\proj {2}} (-3))$, as we have described early on. Observing that $H^1 (\mathcal{T}_\mani / \mathcal{J}_\mani^2 \mathcal{T}_\mani) \cong H^1 (\mathcal{T}_{\proj {2}} \otimes Sym^2 \mathcal{F}_\mani)$, we conclude that the connecting homomorphism is non-null, hence surjective. This splits the first part of the cohomology long exact sequence above in two pieces, in particular we have
\bear
\xymatrix{
0 \ar[rr] && H^0 (\mathcal{T}_\mani) \ar[rr] && \mathbb{C}^{13|12} \ar[rr]^\delta && \mathbb{C}^{1|0} \ar[rr] & & 0
}  
\eear
which proves that $H^0(\mathcal{T}_\mani) \cong \mathbb{C}^{12|12}$. 
\end{proof}
\noindent We are left to prove that the tangent sheaf $\mathcal{T}_\mani$ is actually globally-generated. This is achieved in the following lemma
\begin{lemma}[$\mathcal{T}_\mani$ is globally-generated] The tangent sheaf $\mathcal{T}_\mani$ of $\mani$ is such that the evaluation map $ev_{\mathcal{T}_\mani}: H^0 (\mathcal{T}_\mani)\otimes_{\stsheaf} \stsheaf \rightarrow \mathcal{T}_\mani$ is surjective. That is, $\mathcal{T}_\mani$ is globally-generated. 
\end{lemma}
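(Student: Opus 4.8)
The plan is to deduce global generation from a fibrewise (Nakayama-type) criterion, reducing the whole question to the structure of $\mathcal{T}_\mani$ along the reduced space $\proj 2$ that has already been worked out in the preceding lemmas. First I would invoke Nakayama's lemma in the super setting: since $\mathcal{T}_\mani$ is a coherent (indeed locally-free) sheaf of $\stsheaf$-modules and the stalks $\stsheaf_x$ are local rings whose maximal ideal $\mathfrak{m}_x$ contains the whole nilpotent stalk $\mathcal{J}_{\mani,x}$, the map $ev_{\mathcal{T}_\mani}$ is surjective if and only if its cokernel vanishes, and by Nakayama this can be tested fibrewise. Because $\mathfrak{m}_x \supseteq \mathcal{J}_{\mani,x}$, the fibre is $\mathcal{T}_\mani\otimes_{\stsheaf}\mathbb{C}(x) \cong \mathcal{T}_\mani\lfloor_{\proj 2}(x) \cong \mathcal{T}_{\proj 2}(x)\oplus\mathcal{F}_\mani^\ast(x)$ for every $x\in\proj 2$, so it is enough to show that the image of $H^0(\mathcal{T}_\mani)$ generates each fibre of $\mathcal{T}_\mani\lfloor_{\proj 2}\cong\mathcal{T}_{\proj 2}\oplus\mathcal{F}_\mani^\ast$.

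Next I would record that both summands are globally generated on $\proj 2$: $\mathcal{T}_{\proj 2}$ is a quotient of $\mathcal{O}_{\proj 2}(1)^{\oplus 3}$ via the Euler sequence, while $\mathcal{F}_\mani^\ast=\Pi(\mathcal{O}_{\proj 2}(1)\oplus\mathcal{O}_{\proj 2}(2))$ is a direct sum of base-point-free line bundles. Hence it suffices to prove that the reduction map $H^0(\mathcal{T}_\mani)\to H^0(\mathcal{T}_\mani\lfloor_{\proj 2})$ is surjective, for then $H^0(\mathcal{T}_\mani)$ already contains a generating set of sections for each fibre. To establish this I would factor the reduction through the level-two truncation as $H^0(\mathcal{T}_\mani)\to H^0(\mathcal{T}_\mani/\mathcal{J}^2\mathcal{T}_\mani)\to H^0(\mathcal{T}_\mani\lfloor_{\proj 2})$. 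The second arrow is surjective because, by the lemma computing $\mathcal{T}_\mani\lfloor_{\mani^{(2)}}$ and the explicit decomposition \eqref{deco}, the sheaf $\mathcal{T}_\mani/\mathcal{J}^2\mathcal{T}_\mani$ splits as an $\mathcal{O}_{\proj 2}$-module having $\mathcal{T}_\mani\lfloor_{\proj 2}$ as a direct summand, so the sections of that summand lift and the reduction is just projection onto a direct factor.

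The main obstacle is controlling the first arrow, whose image is $\ker\delta$ for the connecting homomorphism $\delta:H^0(\mathcal{T}_\mani/\mathcal{J}^2\mathcal{T}_\mani)\to H^1(\mathcal{J}^2\mathcal{T}_\mani)\cong\mathbb{C}^{1|0}$ of \eqref{vectorb}; one must check that passing to $\ker\delta$ does not destroy surjectivity onto the generating sections downstairs. On the odd part there is nothing to verify, since $H^1(\mathcal{J}^2\mathcal{T}_\mani)$ is purely even, so $\delta$ annihilates all odd sections and the entire odd part $\mathbb{C}^{0|12}$ survives and surjects onto $H^0(\mathcal{F}_\mani^\ast)$. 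On the even part I would reuse the computation inside the proof of Theorem \ref{maintheorem}: there $\delta$ is shown to be non-zero precisely on the $\mathcal{E}nd(\mathcal{F}_\mani)$-type classes (those of $\theta_{1i}\partial_{\theta_{1i}}$ and $\theta_{2i}\partial_{\theta_{2i}}$), which make up the $H^0(\mathcal{E}nd(\mathcal{F}_\mani))$ complement to $H^0(\mathcal{T}_{\proj 2})$ inside the even part $\mathbb{C}^{13}=H^0(\mathcal{T}_{\proj 2})\oplus H^0(\mathcal{E}nd(\mathcal{F}_\mani))$ of $H^0(\mathcal{T}_\mani/\mathcal{J}^2\mathcal{T}_\mani)$.

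Finally I would close with a dimension count. Since $\delta$ is already non-zero (hence surjective onto $\mathbb{C}$) on the five-dimensional factor $H^0(\mathcal{E}nd(\mathcal{F}_\mani))$, its kernel meets that factor in a subspace of dimension four; therefore, projecting the twelve-dimensional $\ker\delta$ onto the $H^0(\mathcal{T}_{\proj 2})$ summand has kernel of dimension four and image of dimension $12-4=8=\dim H^0(\mathcal{T}_{\proj 2})$, so the projection is onto. Combining the two parities, the reduction $H^0(\mathcal{T}_\mani)\to H^0(\mathcal{T}_\mani\lfloor_{\proj 2})$ is surjective; the fibres of $\mathcal{T}_{\proj 2}\oplus\mathcal{F}_\mani^\ast$ are then generated by global sections of $\mathcal{T}_\mani$, and Nakayama's lemma upgrades this to surjectivity of $ev_{\mathcal{T}_\mani}$, proving that $\mathcal{T}_\mani$ is globally generated. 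The delicate point throughout is the last dimension count, i.e.\ verifying that the obstruction $\delta$ is concentrated in the endomorphism directions and leaves untouched enough of $H^0(\mathcal{T}_{\proj 2})$ and $H^0(\mathcal{F}_\mani^\ast)$ to generate every fibre.
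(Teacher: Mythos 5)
Your proof is correct and follows essentially the same route as the paper's: Nakayama reduces everything to surjectivity of the restriction $H^0(\mathcal{T}_\mani)\to H^0(\mathcal{T}_\mani\lfloor_{\proj 2})$ together with global generation of $\mathcal{T}_{\proj 2}\oplus\mathcal{F}_\mani^\ast$, and that surjectivity is extracted in both arguments from the fact that the connecting homomorphism $\delta$ of \eqref{vectorb} is non-zero on directions (the $\theta_{ji}\partial_{\theta_{ji}}$-type classes in $H^0(\mathcal{E}nd(\mathcal{F}_\mani))$) complementary to $H^0(\mathcal{T}_\mani\lfloor_{\proj 2})$ inside $H^0(\mathcal{T}_\mani/\mathcal{J}^2_\mani\mathcal{T}_\mani)$. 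Your explicit dimension count plus the parity observation for the odd part is just a linear-algebra rephrasing of the paper's snake-lemma diagram.
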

\begin{proof} We start letting $W \defeq H^0 (\mathcal{O}_{\proj 2} \oplus \mathcal{O}_{\proj 2}) \subset H^0 (\mathcal{T}_\mani/ \mathcal{J}_\mani^2 \mathcal{T}_\mani)$ and $V$ be its complement into $H^0 (\mathcal{T}_\mani/ \mathcal{J}_\mani^2 \mathcal{T}_\mani)$, so that $V \oplus W = H^0 (\mathcal{T}_\mani/ \mathcal{J}_\mani^2 \mathcal{T}_\mani)$ and we call $U \defeq H^0 (\mathcal{T}_\mani)$. We have the following commutative diagram
\bear
\xymatrix{
 & \ker {\tilde i} \ar[d] \ar[r] & 0 \ar[d] \ar[r]  \ar[d] & 0 \ar[d]  \\
0 \ar[r] & U \cap W \ar[d]_{\tilde i} \ar[r] & W \ar[r] \ar[d]_{i_W} & \mathbb{C}^{1|0} \ar@{=}[d] \ar[r] & 0 \\
0 \ar[r] & U \ar[d] \ar[r] & V \oplus W \ar[d] \ar[r] & \mathbb{C}^{1|0} \ar[d] \ar[r] & 0 \\
& \coker {\tilde i} \ar[r] & V \ar[r] & 0 & 
}  
\eear
where $ \mathbb{C}^{1|0}$ correspond to $H^1 (\mathcal{J}_\mani^2\mathcal{T}_\mani)$, as computed above. Then, by snake lemma, we have an exact sequence
\bear
\xymatrix{
0 \ar[rr] && \coker {\tilde i} \ar[rr] && V \ar[rr] && 0   \\
}
\eear
therefore $\coker {\tilde i} \cong V$ and we have a surjection $ \xymatrix{U \ar@{>>}[r] &  V}$. In particular, since $H^0 (\mathcal{T}_\mani \lfloor_{\proj 2}) \subset V$ we have a surjective map $\psi : H^0(\mathcal{T}_\mani) \rightarrow H^0 (\mathcal{T}_\mani \lfloor_{\proj 2})$.
Now, let us consider the evaluation map $ev_{\mathcal{T}_\mani} : H^0 (\mathcal{T}_\mani) \otimes_{\stsheaf} \stsheaf \rightarrow \mathcal{T}_\mani$, which is a homomorphism of locally-free sheaves of $\stsheaf$-modules. Upon using Nakayama Lemma (see for example \cite{Vara}), it is enough to show that for all $x \in \proj 2$, the linear map 
\bear
\xymatrix@R=1.5pt{
ev_{\mathcal{T}_\mani}(x) :  H^0 (\mathcal{T}_\mani) \ar[rr] &&  \mathcal{T}_{\mani } (x)  \\
\; \;  \; \; s \ar@{|->}[rr] && s(x)
}  
\eear
that sends a global section $s$ to its evaluation $s(x)$ in $x \in \proj 2$ is surjective. This map can in turn be factored through $\psi $ as follows 
\bear
\xymatrix@R=1.5pt{
H^0 (\mathcal{T}_\mani) \ar[rr]^\psi &&  H^0(\mathcal{T}_\mani \lfloor_{\proj {2}}) \ar[rr] && \mathcal{T}_{\mani}(x) & x \in \proj 2\\
}.
\eear
Then, the first one has been just shown to be surjective, while the second one is well-known to be surjective as $\mathcal{T}_\mani \lfloor_{\proj 2}$ is a direct sum of globally-generated sheaves of $\mathcal{O}_{\proj 2}$-modules. This concludes the proof. \end{proof}
\noindent
The universal property, thus leads to the following 
\begin{theorem}[Map to $G(2|2, \mathcal{T}_\mani)$] There exists a unique map $\Phi_{\mathcal{T}_\mani} : \mani \longrightarrow G(2|2, \mathbb{C}^{12|12})$ up to isomorphism.
\end{theorem}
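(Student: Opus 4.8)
The plan is to invoke directly the \emph{universal property} for maps into super Grassmannians stated above, all of whose hypotheses have by now been verified by the preceding lemmas and theorems. Concretely, I would take the source supermanifold to be $\mani = \proj 2_\omega(\mathcal{F}_\mani)$, the locally-free sheaf of $\stsheaf$-modules playing the role of $\mathcal{E}$ to be the tangent sheaf $\mathcal{T}_\mani$, and the vector superspace $V$ to be its space of global sections $H^0(\mathcal{T}_\mani)$. The proof then amounts to assembling the three data required by the universal property and checking that they are mutually consistent.

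First I would record that $\mathcal{T}_\mani$ is a locally-free sheaf of $\stsheaf$-modules of rank exactly $2|2$, which is immediate since $\mani$ has dimension $2|2$; this fixes the pair $a|b = 2|2$ indexing the target Grassmannian. Next, by Theorem \ref{maintheorem} one has the identification $V \defeq H^0(\mathcal{T}_\mani) \cong \mathbb{C}^{12|12}$, which pins down the ambient super vector space $\mathbb{C}^{n|m} = \mathbb{C}^{12|12}$ of the target $G(2|2, \mathbb{C}^{12|12})$. Finally, the lemma establishing that $\mathcal{T}_\mani$ is globally-generated furnishes precisely the last datum: a surjective sheaf-theoretic morphism $ev_{\mathcal{T}_\mani} : V \otimes \mathcal{O}_\mani \to \mathcal{T}_\mani$.

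With these three ingredients in hand, the universal property applies verbatim with $a|b = 2|2$ and $V = \mathbb{C}^{12|12}$, producing a \emph{unique} morphism $\Phi_{\mathcal{T}_\mani} : \mani \to G(2|2, \mathbb{C}^{12|12})$, characterized by the requirement that the dual inclusion $\mathcal{T}_\mani^\ast \to V^\ast \otimes \mathcal{O}_\mani$ be the pull-back along $\Phi_{\mathcal{T}_\mani}$ of the tautological inclusion $\mathcal{S}_G \to \mathcal{O}_G^{\oplus 12|12}$ of the canonical sequence \eqref{tautsuper}. Uniqueness up to isomorphism is itself part of the statement of the universal property, so no separate argument is needed for it.

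As for the main obstacle: at this final stage there is essentially none, precisely because the genuine content has already been discharged in the two preceding results, namely the computation $H^0(\mathcal{T}_\mani) \cong \mathbb{C}^{12|12}$ (whose crux was the surjectivity of the connecting homomorphism $\delta$, equivalently the non-vanishing of the obstruction class $\omega$) and the verification that the evaluation map is surjective as a map of sheaves. The one point deserving a moment's care is bookkeeping: confirming that the rank of $\mathcal{T}_\mani$ is \emph{genuinely} $2|2$, so that the Grassmannian parametrizes $2|2$-dimensional linear subspaces, and that $\dim_{\mathbb{C}} V = 12|12$ matches the superscript on $\mathbb{C}^{12|12}$, so that the numerical data fed into the universal property are coherent. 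Once this is checked, the theorem follows as a direct corollary.
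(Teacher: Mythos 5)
Your proposal is correct and follows exactly the paper's route: the paper states this theorem as a direct consequence of the universal property, with $\mathcal{E} = \mathcal{T}_\mani$ of rank $2|2$, $V = H^0(\mathcal{T}_\mani) \cong \mathbb{C}^{12|12}$ from Theorem \ref{maintheorem}, and the surjective evaluation map from the global-generation lemma. There is nothing missing.
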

\noindent More can be said about this map, which is actually an \emph{embedding} of $\mani $ into $G(2|2, \mathbb{C}^{12|12})$: that is, it is an {injective} map and its differential $d\Phi_{\mathcal{T}_\mani}$ is injective as well. We prove this in a completely explicit fashion by realizing the actual embedding in a certain chart. \\

\noindent We explain the strategy to do this in a general setting: once one have a map into a super Grassmannian and a local basis $\{ e_1,\ldots,e_a | f_1,\ldots,f_b \}$ is fixed for $\mathcal{E}$ over some open set $\mathcal{U}$, then, over $\mathcal{U}$, the evaluation map $V\otimes \mathcal{O}_\mani\to \mathcal{E}$ is defined by a $(a|b)\times(n|m)$ 
matrix $M_\mathcal{U}$ with coefficients in $\mathcal{O}_\mani(\mathcal{U})$, and any reduction of  $M_\mathcal{U}$ into a standard form of type 
\begin{equation}\label{matrixstform}
\mathcal{Z}_{{I}} \defeq 
\left (
\begin{array}{ccc|ccc||ccc|ccc}
& &  & 1 \; & & & & & & & & \\
\;  & \; x_I \; & \; &  & \ddots & & & 0 & &\;  &\; \xi_I \; &\;  \\
& & & & & \; 1& & & & & & \\
\hline \hline
& & & & & & 1\; & & & & & \\
&\; \xi_I \; & & & 0& & &\ddots & & &\;  x_I \; & \\
& & & & & & & &\; 1 & & & 
\end{array}
\right ),
\end{equation}
by means of elementary row operations, is a local representation of the map $\Phi : \mani \rightarrow G(a|b, \mathbb{C}^{n|m})$. One can then easily verify injectivity and the injectivity of the differential of this map via this local representation, as to establish whether the map constitutes an embedding.\\ 

\noindent In order to do this, we need the explicit form of the global sections generating $\mathcal{T}_\mani$. Notice that to keep the discussion the most general possibile we will keep a \emph{parameter} $\lambda \in \mathbb{C}$ representing the cohomology class $\omega_\mani \in H^1 (\mathcal{T}_{\proj 2} (-3)) \cong \mathbb{C}$, which we recall to be the same $\lambda$ appearing in the transition functions provided by Theorem \ref{pi2}. 
\begin{theorem}[Generators of $H^0(\mathcal{T}_\mani)$] The tangent sheaf $\mathcal{T}_\mani$ of $\mani$ has $12|12$ global sections and in particular, in the local chart $\, \mathcal{U}_0$, a basis for $H^0(\mathcal{T}_\mani)$ is given by 
$\mbox{\emph{span}}_\mathbb{C} \{ \mathcal{V}_1, \ldots, \mathcal{V}_{12} |\, \Xi_1 , \ldots, \Xi_{12} \}$, where
\begin{align}
& \mathcal{V}_1 = \partial_{z_{10}}, \qquad \mathcal{V}_2 = \partial_{z_{20}}, \qquad \mathcal{V}_3 = z_{20} \partial_{z_{10}}, \qquad \mathcal{V}_4 = z_{10} \partial_{z_{20}}, \qquad \mathcal{V}_5 = z_{10} \partial_{z_{10}} - z_{20} \partial_{z_{20}}, \nonumber \\
& \mathcal{V}_6 = \theta_{10} \partial_{\theta_{20}}, \qquad \mathcal{V}_{7} = z_{10}\theta_{10}\partial_{\theta_{20}}, \qquad \mathcal{V}_{8} = z_{20} \theta_{10} \partial_{\theta_{20}}, \qquad \nonumber \\
& \mathcal{V}_9 = \theta_{10} \partial_{\theta_{10}} + z_{20} \partial_{z_{20}}, \qquad \mathcal{V}_{10} = \theta_{20} \partial_{\theta_{20}} + z_{20}\partial_{z_{20}}, \nonumber \\
& \mathcal{V}_{11} = (z_{10})^2\partial_{z_{10}} + (z_{10} z_{20} + \lambda \theta_{10} \theta_{20}) \partial_{z_{20}} + z_{10} \theta_{10} \partial_{\theta_{10}} + 2 z_{1 0}\theta_{20} \partial_{\theta_{20}}, \nonumber \\
& \mathcal{V}_{12} = (z_{10}z_{20} - \lambda \theta_{10} \theta_{20})\partial_{z_{10}} + (z_{20})^2 \partial_{z_{20}} + z_{20} \theta_{10}\partial_{\theta_{10}} + 2 z_{20} \theta_{20} \partial_{\theta_{20}}, \nonumber \\ 
\nonumber \\
& \Xi_1 =  \partial_{\theta_{10}}, \qquad \Xi_2 = \partial_{\theta_{20}}, \qquad \Xi_3 = \theta_{10}\partial_{z_{10}}, \qquad \Xi_{4} = \theta_{10} \partial_{z_{20}}, \qquad \Xi_5 = z_{10} \partial_{\theta_{20}}, \qquad \Xi_6 = z_{20} \partial_{\theta_{20}}, 
\nonumber \\
& \Xi_{7} = (z_{10})^2 \partial_{\theta_{20}} - \lambda z_{10} \theta_{10} \partial_{z_{20}}, \qquad \Xi_{8} = (z_{20})^2 \partial_{\theta_{20}} + \lambda z_{20} \theta_{10} \partial_{z_{10}}, \nonumber \\
& \Xi_9 = z_{10} \partial_{\theta_{10}} + \lambda \theta_{20} \partial_{z_{20}}, \qquad \Xi_{10} = - z_{20} \partial_{\theta_{10}} + \lambda \theta_{20} \partial_{z_{10}}, \nonumber \\
& \Xi_{11} = z_{10} \theta_{10} \partial_{z_{10}} + z_{20} \theta_{10} \partial_{z_{20}} + 2 \theta_{10} \theta_{20} \partial_{\theta_{20}}, \nonumber \\
& \Xi_{12} = (z_{10} z_{20} - \lambda \theta_{10}\theta_{20}) \partial_{\theta_{20}} - \lambda z_{20} \theta_{10} \partial_{z_{20}}, 
\end{align}
where $\lambda \in \mathbb{C}$ is a complex number representing the cohomology class $H^1 (\mathcal{T}_\proj 2 (-3)) \cong \mathbb{C}$.  
\end{theorem}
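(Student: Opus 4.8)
The plan is to take advantage of the dimension count already secured in Theorem~\ref{maintheorem}, namely $H^0(\mathcal{T}_\mani)\cong\mathbb{C}^{12|12}$, so that the theorem reduces to two checks on the twenty-four displayed fields: that each $\mathcal{V}_i$ and $\Xi_j$, written in the chart $\mathcal{U}_0$, is the restriction of a genuine global section of $\mathcal{T}_\mani$, and that they are linearly independent over $\mathbb{C}$. If both hold, the count forces them to be a basis and nothing further is required. Linear independence is the easy half: reducing modulo $\mathcal{J}_\mani^2$ and reading off the coefficient functions attached to each coordinate derivative in $\mathcal{U}_0$, the fields have manifestly distinct leading terms (constant, linear and quadratic patterns in $z_{10},z_{20}$ against $\partial_{z_{10}},\partial_{z_{20}},\partial_{\theta_{10}},\partial_{\theta_{20}}$), so any vanishing $\mathbb{C}$-combination forces all coefficients to vanish term by term.

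For the globality check I would organize the bookkeeping through the short exact sequence~\eqref{vectorb} and the explicit decomposition~\eqref{deco}. Since $H^0(\mathcal{J}_\mani^2\mathcal{T}_\mani)=0$, the restriction $H^0(\mathcal{T}_\mani)\hookrightarrow H^0(\mathcal{T}_\mani/\mathcal{J}_\mani^2\mathcal{T}_\mani)$ identifies $H^0(\mathcal{T}_\mani)$ with $\ker\delta$, and each kernel element lifts \emph{uniquely} to a global field. I would first write down generators of $H^0(\mathcal{T}_\mani/\mathcal{J}_\mani^2\mathcal{T}_\mani)\cong\mathbb{C}^{13|12}$ summand by summand from~\eqref{deco}: the $\mathcal{T}_{\proj 2}$ piece supplies the eight projective fields, of which the seven in $\mathcal{V}_1,\ldots,\mathcal{V}_5$ and the even leading parts of $\mathcal{V}_{11},\mathcal{V}_{12}$ lift directly, while one remaining diagonal direction survives only in combination with the diagonal odd endomorphisms $\theta\partial_\theta$, producing $\mathcal{V}_9,\mathcal{V}_{10}$; the $\mathcal{O}_{\proj 2}(1)$ summand supplies $\mathcal{V}_6,\mathcal{V}_7,\mathcal{V}_8$; and the odd summands supply the $\Xi_j$. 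The unique even section annihilated by $\delta$ is precisely the $\theta\partial_\theta$ combination whose \v{C}ech coboundary reproduces the class $\omega\in H^1(\mathcal{T}_{\proj 2}(-3))$ exhibited in Theorem~\ref{maintheorem}; since $H^1(\mathcal{J}_\mani^2\mathcal{T}_\mani)\cong\mathbb{C}^{1|0}$ is purely even, all twelve odd generators automatically lie in $\ker\delta$ and lift. This is what explains the final count $12|12$ and tells us exactly which mod-$\mathcal{J}_\mani^2$ fields to complete.

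Finally I would confirm globality by direct computation, transporting each field from $\mathcal{U}_0$ to $\mathcal{U}_1$ via the transition functions of Lemma~\ref{transdec} and the induced Jacobian~\eqref{jaco}, and verifying that every transformed coefficient is regular on $\mathcal{U}_1$ (no surviving negative powers of $z_{11}$); regularity on $\mathcal{U}_0\cap\mathcal{U}_2$ then follows from the $S_3$-symmetric image of~\eqref{jaco}. The main obstacle is exactly this last step for the $\lambda$-dependent fields $\mathcal{V}_{11},\mathcal{V}_{12},\Xi_7,\Xi_8,\Xi_9,\Xi_{10},\Xi_{12}$: the non-projectedness enters through the term $\lambda\,\theta_{11}\theta_{21}/(z_{11})^2$ in~\eqref{trans1}, and the coefficients multiplying $\lambda$ in each field are precisely those needed so that, after applying~\eqref{jaco}, the poles this term would generate cancel against the poles coming from the even part of the change of coordinates (for the even fields these corrections are the $\mathcal{J}_\mani^2\mathcal{T}_\mani$-valued completions of the unique lift, for the odd fields they already appear at the level of $\mathcal{T}_\mani/\mathcal{J}_\mani^2\mathcal{T}_\mani$). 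Tracking this cancellation is the only genuinely delicate calculation; the $\lambda=0$ terms are the classical split data and are routine.
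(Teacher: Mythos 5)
Your plan is sound and, as far as the paper is concerned, more detailed than what it actually offers: the paper's entire proof is the single sentence that the result follows ``by evaluating the 0-\v{C}ech cohomology group of $\mathcal{T}_\mani$ by means of a computation in charts,'' i.e.\ by solving directly for all vector fields regular on every chart. Your route is genuinely lighter: since Theorem \ref{maintheorem} already gives $\dim H^0(\mathcal{T}_\mani)=12|12$, you only need to verify that the twenty-four listed fields are global and $\mathbb{C}$-independent, and independence is immediate mod $\mathcal{J}^2_\mani$ (where the restriction map is injective because $H^0(\mathcal{J}^2_\mani\mathcal{T}_\mani)=0$). Your bookkeeping through \eqref{vectorb} and \eqref{deco} is also consistent: the even count $7+2+3=12$ (seven sections of $\mathcal{T}_{\proj 2}$ lifting with $\mathcal{J}^2$-corrections, the two combinations $\mathcal{V}_9,\mathcal{V}_{10}$ absorbing the diagonal direction, three from $\mathcal{O}_{\proj 2}(1)$) and the observation that all twelve odd sections lift because $H^1(\mathcal{J}^2_\mani\mathcal{T}_\mani)\cong\mathbb{C}^{1|0}$ is purely even are both correct and explain the asymmetry $13|12\to 12|12$. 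Two small points to fix: the phrase ``the unique even section annihilated by $\delta$ is precisely the $\theta\partial_\theta$ combination whose coboundary reproduces $\omega$'' says the opposite of what you mean --- the direction detected by $\omega$ is the one \emph{not} in $\ker\delta$; and the final regularity check must be carried out on all of $\mathcal{U}_1$ and $\mathcal{U}_2$ (extension across $z_{11}=0$, etc.), not merely on the overlaps. The one genuinely computational step --- the cancellation of the poles produced by the $\lambda\,\theta_{11}\theta_{21}/(z_{11})^2$ term against the $\lambda$-corrections in $\mathcal{V}_{11},\mathcal{V}_{12},\Xi_7,\ldots,\Xi_{12}$ --- is correctly identified but left to be done; that is exactly the content the paper also suppresses, so your proposal matches the paper's level of rigor while organizing the argument more transparently.
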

\begin{proof} The theorem is proved by evaluating the 0-\v{C}ech cohomology group of $\mathcal{T}_\mani$, by means of a computation in charts.\end{proof}
\noindent The embedding is explicitly realized through the following 
Now, following what explained above, the coefficients of the expansion are mapped into $12|12$ columns, so that the resulting matrix is a super Grassmannian of the kind $G(2|2, \mathbb{C}^{12|12})$, represented in a certain super big-cell. The full super Grassmannian is then reconstructed via its transition functions, as explained in the previous section.\\
\noindent In our particular case, the global sections lead to an image into $G(2|2, \mathbb{C}^{12|12})$ as follows:
\bear
\Phi_{\mathcal{T}_\mani} (\mani) = \left ( 
\begin{array}{cc|ccc|cc|ccc}
1 & 0 & \quad & A_{1\times10} & \quad & 0 & 0 & \quad & B_{1 \times 10} & \quad   \\
0 & 1 & \quad & A_{2\times 10}& \quad & 0 & 0 & \quad & B_{2 \times 10} & \quad \\  
\hline
0 & 0 & \quad & C_{1 \times 10 } & \quad & 1 & 0 & \quad  & D_{1\times 10 }& \quad  \\
0 & 0 & \quad & C_{2 \times 10 } & \quad & 0 & 1 & \quad & D_{2\times 10} & \quad 
\end{array}
\right ),
\eear
where we have highlighted the super big-cell singled out by the four global sections $\{ \mathcal{V}_1 = \partial_{z_1}, \mathcal{V}_2 = \partial_{z_2}, \Xi_1 = \partial_{\theta_1}, \Xi_2 = \partial_{\theta_2} \}$ in the chart $\mathcal{U}_0$ and the 
$A_{i \times 10}, B_{i \times 10}, C_{i \times 10}, D_{i \times 10}$ for $i = 1,2$, make up four $2 \times 10$ matrices:
\begin{align}
& A \defeq \left ( \begin{array}{c}
A_{1\times 10}\\
A_{2\times 10} 
\end{array}
\right )
= \left ( \begin{array}{cccccccccccccccccccc}
z_2 & & 0     & & z_1   & &  0            & & 0                   & & 0                   & & 0            & & 0             & & z_1^2                                                 & & z_1z_2 - \lambda \theta_1 \theta_2 \\
0 	   & &  z_1 & & - z_2 & & 0            & & 0                   & & 0                   & & z_2        & &  z_2         & & z_1 z_2 + \lambda \theta_1 \theta_2 & & z_2^2                                   
\end{array}
\right ), \nonumber \\ \nonumber
\\
& B \defeq \left ( \begin{array}{c}
B_{1\times 10}\\
B_{2\times 10} 
\end{array}
\right )
= \left ( \begin{array}{cccccccccccccccccccc}
 \theta_1 & & 0             && 0    & &0    & & 0                                  & & \lambda z_2 \theta_1 & &0                         & & \lambda \theta_2 & & z_1\theta_1      & & 0    \\
 0            &  &\theta_1  && 0    & & 0    & & -\lambda z_1 \theta_1 & & 0                                 & & \lambda \theta_2 & & 0                        & & z_2 \theta_1     & &- \lambda z_2         
\end{array}
\right ), \nonumber \\
\nonumber \\
& C \defeq \left ( \begin{array}{c}
C_{1\times 10}\\
C_{2\times 10} 
\end{array}
\right ) 
= \left ( \begin{array}{cccccccccccccccccccc}
0     & & 0     & & 0       & &  0            & & 0                   & & 0                   & & \theta_1 & & 0             & & z_1\theta_1                                       & & z_2 \theta_1                                   \\
 0     & & 0     & & 0       &  & \theta_1 & & z_1 \theta_1 & & z_2 \theta_1 & & 0             & & \theta_2 & & 2z_1 \theta_2                                    & & 2z_2 \theta_2                                \end{array}
\right ), \nonumber \\
\nonumber \\
& D \defeq \left ( \begin{array}{c}
D_{1\times 10}\\
D_{2\times 10} 
\end{array}
\right )
= \left ( \begin{array}{cccccccccccccccccccc}
 0            & & 0    & & 0      & &  0                                  & &  0            & & 0                   & & z_1                      & & -z_2                   & & 0 & & 0       \\
  0            & & 0            & & z_1 & & z_2 & & z_1^2                           & & z_2^2                               & & 0                  & &  0                         & & 2\theta_1 \theta_2 & & z_1 z_2 - \lambda \theta_1 \theta_2
 \end{array}
\right ),
\end{align}
where the subscript referring to the chart $\mathcal{U}_0$ of $\mani$ has been suppressed for readability purpose. One can then confirm that the map $\Phi_{\mathcal{T}_\mani}$ is indeed an embedding via this explicit expression.  
\begin{theorem} Let $\proj {2}_\omega (\mathcal{F}_\mani)$ be the non-projected supermanifold endowed with a fermionic sheaf $\mathcal{F}_\mani \defeq \Pi \mathcal{O}_{\proj 2} (-1) \oplus \Pi \mathcal{O}_{\proj 2} (-2)$. Then the map 
$i : \proj {2}_\omega (\mathcal{F}_\mani) \rightarrow G(2|2, \mathbb{C}^{12|12})$ is an embedding of supermanifolds.
\end{theorem}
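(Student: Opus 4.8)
The plan is to use the criterion recalled just after the definition of embedded supermanifold: a morphism $i : \mani \to G$ is a (closed) embedding precisely when the comorphism $i^\sharp : \mathcal{O}_G \to i_* \stsheaf$ is surjective as a morphism of sheaves. Since surjectivity of a sheaf morphism is a local property and the map $\Phi_{\mathcal{T}_\mani}$ is built chart-by-chart, I would work entirely in the distinguished super big-cell of $G(2|2,\mathbb{C}^{12|12})$ singled out by the four sections $\mathcal{V}_1=\partial_{z_{10}}$, $\mathcal{V}_2=\partial_{z_{20}}$, $\Xi_1=\partial_{\theta_{10}}$, $\Xi_2=\partial_{\theta_{20}}$ over $\mathcal{U}_0$, and then invoke the $S_3$-symmetry of $\proj 2$ to handle the remaining charts. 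The key structural observation is that, once $M_{\mathcal{U}_0}$ is put in the standard form displayed above, the entries of the blocks $A, B, C, D$ are \emph{by construction} the pullbacks $i^\sharp(x^{\alpha\beta}_I)$ and $i^\sharp(\xi^{\alpha\beta}_I)$ of the affine coordinates on that super big-cell; so reading coordinates off the matrix is the same as computing the image of $i^\sharp$.

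First I would dispose of the topological layer. Setting nilpotents to zero, the reduced entries of $A$ contain $z_{10}$ and $z_{20}$ (for instance $A_{11}=z_{20}$ and $A_{22}=z_{10}$), and these two functions already separate the points of $\mathcal{U}_0\cong\mathbb{C}^2$; hence the reduced map $\proj 2 \to G_{red}$ is injective on $\mathcal{U}_0$, and by the cyclic symmetry it is injective on every chart. Since the charts glue compatibly, and $\proj 2$ is compact while $|G|$ is Hausdorff, this continuous injection is automatically a homeomorphism onto its image, and that image is closed. This settles the homeomorphism-onto-image requirement and reduces everything to surjectivity of $i^\sharp$, equivalently to injectivity of the super-differential $d i$.

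Next I would read off from the same matrix that all four local coordinates of $\mani$ on $\mathcal{U}_0$ lie in the image of $i^\sharp$: the even coordinates $z_{10},z_{20}$ appear as entries of $A$, the odd coordinate $\theta_{10}$ appears as the entry $B_{11}=\theta_{10}$, and $\theta_{20}$ appears as the entry $C_{28}=\theta_{20}$ (and, when $\lambda\neq0$, also as $\lambda^{-1}$ times an entry of $B$). Because $z_{10},z_{20}\,|\,\theta_{10},\theta_{20}$ generate $\stsheaf(\mathcal{U}_0)$ as a $\mathbb{C}$-superalgebra, the comorphism $i^\sharp$ is surjective over $\mathcal{U}_0$. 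Equivalently, the super-Jacobian of the tuple of matrix entries with respect to $(z_{10},z_{20}\,|\,\theta_{10},\theta_{20})$ contains a $(2|2)\times(2|2)$ identity block, so $d i$ is injective at every point, including on the fermionic directions, which is exactly the immersion condition. Running the identical argument on $\mathcal{U}_1$ and $\mathcal{U}_2$ through the $S_3$-symmetry and using locality of surjectivity then yields global surjectivity of $i^\sharp$, so $i$ is a closed embedding.

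The step I expect to require the most care is the odd/nilpotent layer, namely certifying injectivity of $d i$ on the fermionic tangent directions rather than merely injectivity of the reduced map. Concretely one must verify that $\theta_{20}$ is recovered cleanly and is not obscured by nilpotent ambiguities arising when $M_{\mathcal{U}_0}$ is reduced to standard form, and that the coordinates so recovered are compatible with the transition functions of Lemma~\ref{transdec}, so that the local embeddings glue to a global one. The deformation parameter $\lambda$ (equivalently the nonzero class $\omega$) enters the entries of $A,B,D$ through the terms $\lambda\theta_{10}\theta_{20}$ and $\lambda z_{i0}\theta_{10}$, and the real content is to check that these contributions do not degenerate the super-Jacobian; once the $(2|2)\times(2|2)$ identity block is exhibited, invertibility of the Berezinian of $d i$ is immediate and the argument closes.
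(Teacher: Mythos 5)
Your proposal is correct and follows essentially the same route as the paper: both arguments reduce to checking injectivity on geometric points plus injectivity of the super differential by inspecting the explicit big-cell matrix, and your observation that $z_{10},z_{20},\theta_{10},\theta_{20}$ occur verbatim as entries of $A,B,C$ (giving a $(2|2)\times(2|2)$ identity block in the super-Jacobian) is precisely the content of the paper's concluding simplification via the subset $S=\{\mathcal{V}_1,\mathcal{V}_2,\mathcal{V}_5,\mathcal{V}_9-\mathcal{V}_{10},\Xi_1,\Xi_2\}$, whereas the paper's official proof computes the full $4\times 80$ Jacobian and checks it has rank $4$. The only point treated more casually than it deserves (in both your write-up and the paper's) is that chart-wise injectivity of the reduced map does not by itself give global injectivity on $\proj 2$, but this is a shared terseness rather than a divergence of method.
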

\begin{proof} One checks from the expressions above that the map is injective on the geometric points, that is on $\proj 2$, and that its super differential is injective. This can be checked, for example, by representing the super differential 
as a $4 \times 80 $ matrix, where the four $1\times 80$ rows are given by the derivatives of a row vector $(A_{i \times 10}, B_{i \times 10 }, C_{i \times 10}, D_{i \times 10})$ with respect to 
$\partial_{z_1}, \partial_{z_2}, \partial_{\theta_1}, \partial_{\theta_2}.$ The resulting matrix has indeed rank $4$. \end{proof}
\noindent It is fair to say, by the way, that one can simplify the proof and avoid cumbersome computation, by considering just a subset of the global sections found above in order to prove global generation and injectivity of the differential. For example, the subset of $H^0 (\mathcal{T}_\mani)$ given by the sections  
\bear
S \defeq \left \{ \mathcal{V}_1, \mathcal{V}_2, \mathcal{V}_5, \mathcal{V}_{9}-\mathcal{V}_{10}, \Xi_1 , \Xi_2 \right \} \subset H^{0} (\mathcal{T}_\mani). 
\eear
does the job. Indeed, these sections make up a sub-matrix of the $12|12 \times 4|4$ matrix given, having columns given by coordinates of the global sections with respect to the basis $\partial_{z_1}, \partial_{z_2}, \partial_{\theta_1}, \partial_{\theta_2}$ in the chart $\mathcal{U}_0$ as above. Writing the columns in a suitable order, one gets
\bear
i (S) = \left (\begin{array}{c|cccc|cc}
& \mathcal{V}_9-\mathcal{V}_{10} & \mathcal{V}_5 & \mathcal{V}_1 & \mathcal{V}_2 & \Xi_1 & \Xi_2 \\
\hline
\partial_{z_1} & 0 & z_1 & 1 & 0 & 0 & 0 \\
\partial_{z_2} & 0 & -z_2 & 0 & 1 & 0 & 0 \\
\hline
\partial_{\theta_1} & \theta_1 & 0 & 0 & 0 & 1 & 0\\
\partial_{\theta_2} & - \theta_2 & 0 & 0 & 0 & 0 & 1
\end{array}
\right ).
\eear
This is a \emph{linear} embedding of $\mathcal{U}_0$ into a super big-cell of the super Grassmannian: which proves both global generation and injectivity at the level of the differential over $\mathcal{U}_0$ at once. Also, by symmetry, or analogously by the homogeneity of $\mani$ and $\mathcal{T}_\mani$ with respect to the action of $PGL(3)$, the same result holds true over $\mathcal{U}_1$ and $\mathcal{U}_2$ as well. 

\section*{Acknowledgements} 
\noindent This research is original and has a financial support of the Universit\`a del Piemonte Orientale. 
(Fondi Ricerca Locale).

\bibliographystyle{amsplain}

\end{document}